\renewcommand{\norm}[1]{\left\lVert#1\right\rVert}
\renewcommand{\abs}[1]{\left\lvert#1\right\rvert}
\let\realItem\item 
\NewDocumentCommand\myItem{ o }{%
   \IfNoValueTF{#1}%
      {\realItem}
      {\realItem[#1]\def\@currentlabel{#1}}
}
\setlist[enumerate]{
    before=\let\item\myItem,       
    label=\textnormal{(\arabic*)}, 
    widest=(2')                    
}
\pgfplotsset{
  width=.65\linewidth,
  axis background/.style={fill=black!5!white},
  grid style={densely dotted,semithick},
  legend style={
    legend columns=1,
    legend pos=outer north east
  },
  compat=newest 
}
\numberwithin{equation}{section}
\newcommand{\Div}{\divergence}
\newcommand{\dd}{\,\mathrm{d}}
\newcommand{\ds}{\dd s}
\newcommand{\dt}{\dd t}
\providecommand{\seminormtmp}[2]{{#1[{#2}#1]}}
\providecommand{\seminorm}[1]{\seminormtmp{}{#1}}
\begin{document}

\title[Numerics for $p$-Stokes]{A class of space-time discretizations for the stochastic $p$-Stokes system}


\author{Kim-Ngan Le, J\"{o}rn Wichmann}%
\address[N. Le, J. Wichmann]{School of Mathematics, Monash University, Australia}%
\email{ngan.le@monash.edu,joern.wichmann@monash.edu}%
\thanks{ This work was partially supported by the Australian Government through the Australian Research Council’s Discovery
Projects funding scheme (grant number DP220100937). }

\begin{abstract}
The main objective of the present paper is to construct a new class of space-time discretizations for the stochastic $p$-Stokes system and analyze its stability and convergence properties. 

We derive regularity results for the approximation that are similar to the natural regularity of solutions. One of the key arguments relies on discrete extrapolation that allows us to relate lower moments of discrete maximal processes.

We show that, if the generic spatial discretization is constraint conforming, then the velocity approximation satisfies a best-approximation property in the natural distance. 

Moreover, we present an example such that the resulting velocity approximation converges with rate $1/2$ in time and $1$ in space towards the (unknown) target velocity with respect to the natural distance. The theory is corroborated by numerical experiments.
\end{abstract}

\subjclass[2020]{%
35K55, 
   35K65, 
   35K67, 
   35R60, 
   	34K28, 
    65C30, 
	60H15 
}
\keywords{SPDEs, Stochastic $p$-Stokes system, power-law fluids, Conforming finite element methods, Convergence rates, Error analysis}

\maketitle

\tableofcontents
\section{Introduction}
The $p$-Stokes system models the evolution of power-law fluids in the regime of laminar flow. It neglects additional difficulties that may arise through convective effects present for general flows. More details on the physical interpretation can be found e.g. in~\cite{MR2182831} and the references therein. Additionally, the stochastic forcing accounts for fluctuations on the microscopic description of the fluid. A derivation of the random fluctuations for the most famous Navier-Stokes equations can be found e.g. in~\cite{MR2050201} (see also~\cite{MR2459085,MR3607454} and the references therein).

In this paper, we consider the stochastic $p$-Stokes system with multiplicative noise: Given some bounded domain $\mathcal{O} \subset \mathbb{R}^n$ and a time horizon $T> 0$, we seek a velocity field $u$ and a pressure~$\pi$ such that the momentum equations and the incompressibility condition 
\begin{subequations} \label{eq:intro_pStokes}
\begin{alignat}{2} \label{eq:intro_pStokes01}
\dd u  - \Div S(\varepsilon u) \dt + \nabla \dd \pi &= G(u) \dd W(t) \quad &&\text{ on } (0,T)\times \mathcal{O}, \\ \label{eq:intro_pStokes02}
\Div u &= 0 \quad &&\text{ on } (0,T)\times \mathcal{O},
\end{alignat}
\end{subequations}
are satisfied, respectively. Here, $\varepsilon u = 2^{-1}( \nabla u + (\nabla u)^T )$ denotes the symmetric velocity gradient and $S$ is the viscous stress and has the form
\begin{align} \label{eq:intro_defS}
S(A) = (\kappa + \abs{A})^{p-2} A \in \mathbb{R}^{n \times n},
\end{align}
for $p \in (1,\infty)$ and $\kappa \geq 0$. Moreover, $G(\cdot) \dd W$ denotes a stochastic forcing driven by a cylindrical Wiener process~$W$ and a suitable noise coefficient~$G$. 
The system couples with incorporating no-slip boundary conditions and an initial state for the velocity field
\begin{subequations}
\begin{alignat}{2}
u &= 0 \quad &&\text{ on } (0,T)\times \partial \mathcal{O}, \\
u(0) &= u_0 \quad &&\text{ on }  \mathcal{O},
\end{alignat}
\end{subequations} 
as well as a mean-free and an initial condition for the pressure
\begin{subequations}
\begin{alignat}{2}
\int_{\mathcal{O}} \pi(x) \dd x &= 0 \quad &&\text{ on } (0,T), \\ \label{intro:PressureInit}
\nabla \pi(0) &= 0 \quad &&\text{ on }  \mathcal{O}.
\end{alignat}
\end{subequations} 

The global well-posedness of~\eqref{eq:intro_pStokes} is studied in a recent paper of the second author~\cite{2022arXiv220902796W}. He shows that in the context of analytically weak solutions, the stochastic pressure -- related to non-divergence free stochastic forces -- enjoys almost $-\frac{1}{2}$ temporal derivatives on a Besov scale. The velocity~$u$ of strong solutions obeys $\frac12$ temporal derivatives in an exponential Nikolskii space. Moreover,  the non-linear symmetric gradient $V(\varepsilon u):= (\kappa + \abs{\varepsilon u})^{(p-2)/2} \varepsilon u$ 
has $\frac12$ temporal derivatives in a Nikolskii space. These temporal regularity results on the natural scale for the stochastic $p$-Stokes system will be used to estimate approximation errors in this paper.

Many authors have contributed towards the development of numerical algorithms for the deterministic counterpart of~\eqref{eq:intro_defS} and related equations, see e.g.~\cite{MR1069652,MR1301740,MR1860723,MR2511695,MR2519599,CP,
BelenkiBerselliDieningRozicka2012,MR3035482,MR3020906,MR3010181,
MR3066804,MR3831242,MR4048427,
MR4111847,MR4092271,MR4344263,MR4244254,
MR4319602,MR4565577,MR4537562,MR4612144}. However, to the best of the authors knowledge there is no numerical algorithm for the approximation of the stochastic $p$-Stokes system driven by a general multiplicative stochastic forcing.

The numerical approximation of stochastic partial differential equations has received much attention in recent years, see e.g.~\cite{MR2139212,MR2465711,MR2578878,
BrezezniakProhlCarelli2013,MR3843574,MR4179715,MR4029845,
MR4261330,MR4298537,MR4243433,
Ondrejt2022,MR4410739,MR4565984,2023arXiv230300411K}. 
A major obstacle for the development of numerical algorithms of stochastic equations is the low temporal regularity of solutions. Generally, solutions will not exceed the temporal regularity of its driving stochastic process. In our case, we enforce the system by a cylindrical Wiener process for which sharp regularity results are available, cf.~\cite{VerHyt08}.

It has been observed that especially the temporal regularity of the pressure deteriorates for stochastic fluid models, cf.~\cite{MR4286261,MR4274685}. This leads to severe difficulties in the error analysis for non-exactly divergence free velocity approximations, since the velocity approximation depends on the pressure approximation. This strongly motivates the use of exactly divergence-free approximations from a theoretical perspective.

\textbf{Our contributions:}
We construct a new class of space-time discretizations for the stochastic $p$-Stokes system and analyze its stability and convergence properties. Our construction relies on the following three steps:
\begin{enumerate}
\item Data approximation; at some point one needs to approximate the hyper parameters of the model. Instead of prescribing an explicit approximation rule, we propose a distance measure of abstract data approximations. In this way a certain flexibility in the construction of the load data is introduced. We show that all data approximations lead to the same speed of convergence.
\item Generic spatial discretization; we use abstract approximate spaces of the natural regularity class of solutions for the spatial discretization. This enables a unified stability theory for a broad class of classical spatial discretizations.
\item Robust temporal discretization; in order to resolve the lack of regularity of solutions we approximate time-averaged values of the unknown solution. This technique has already been used in~\cite{MR4286257} and~\cite{Diening2022}. It allows us to establish optimal convergence rates with minor regularity requirements. 
\end{enumerate}

\textbf{Outline:}
The paper is organized as follows:

Section~\ref{sec:NotaResu} introduces the mathematical framework and summarizes the main results of the paper. 

Section~\ref{sec:FullyDisc} addresses the definition of the abstract space-time discretizations. 

Section~\ref{sec:Stability} contains stability results for the abstract velocity and pressure approximations. 

Section~\ref{sec:ErrorDecomp} deals with the derivation of the error decomposition.

Section~\ref{sec:Convergence} provides an example of a discretization that convergence with optimal rates.

Section~\ref{sec:Numerical-simulation} presents numerical simulations that corroborate the theory.

Appendix~\ref{app:Appendix} contains further details on pressure norms, the relation of~$S$ and~$V$ and discrete extrapolation.

\section{Notations and Main results}\label{sec:NotaResu}
Let $\mathcal{O} \subset \mathbb{R}^n$ be a bounded Lipschitz domain. For a given $T>0$ we denote by $I := (0,T)$ the time interval and write $\mathcal{O}_T := I \times \mathcal{O}$ for the time space cylinder. Moreover, let $\left(\Omega,\mathcal{F}, (\mathcal{F}_t)_{t\in I}, \mathbb{P} \right)$ denote a stochastic basis, i.e., a probability space with a complete and right continuous filtration $(\mathcal{F}_t)_{t\in I}$. We write $f \lesssim g$ for two non-negative quantities $f$ and $g$ if $f$ is bounded by $g$ up to a multiplicative constant. Accordingly we define $\gtrsim$ and $\eqsim$. For $N \in \mathbb{N}$, we abbreviate $[N] := \{ 1, \ldots, N\}$ and $[N_0] := [N] \cup \{0\}$. We denote by $c$ a generic constant which can change its value from line to line. 

\subsection{Function spaces} \label{sec:Function spaces}
As usual, for $1\leq q < \infty$ we denote by $L^q(\mathcal{O})$ the Lebesgue space and $W^{1,q}(\mathcal{O})$ the Sobolev space. Moreover, $W^{1,q}_0(\mathcal{O})$ denotes the Sobolev spaces with zero boundary values. It is the closure of $C^\infty_0(\mathcal{O})$ (smooth functions with compact support) in the $W^{1,q}(\mathcal{O})$-norm. We denote by $W^{-1,q'}(\mathcal{O})$ the dual of $W^{1,q}_0(\mathcal{O})$. The space of mean-value free Lebesgue functions is denoted by $L^q_0(\mathcal{O})$. The space of smooth, compactly supported and divergence-free vector fields is called $C^\infty_{0,\Div}(\mathcal{O})$ and its closure within the $W^{1,p}$-norm is abbreviated by $W^{1,p}_{0,\Div}(\mathcal{O})$. Let $H$ be a Hilbert space. We denote its inner product by $\left( \cdot, \cdot \right)_H$. If the Hilbert space is $L^2(\mathcal{O})$ we simply write $\left( \cdot, \cdot \right)$. We do not distinguish notation between scalar-, vector- and matrix-valued functions.

For a Banach space $\left(X, \norm{\cdot}_X \right)$ let $L^q(I;X)$ be the Bochner space of Bochner-measurable functions $u: I \to X$ satisfying $t \mapsto \norm{u(t)}_X \in L^q(I)$. Moreover, $C^0(\overline{I};X)$ is the space of continuous functions with respect to the norm-topology. We also use $C^{0,\alpha}(\overline{I};X)$ for the space of $\alpha$-H\"{o}lder continuous functions. Given an Orlicz-function $\Phi: [0,\infty] \to [0,\infty]$, i.e. a convex function satisfying $ \lim_{t \to 0} \Phi(t)/t = 0$ and $\lim_{t \to \infty} \Phi(t)/t = \infty$ we define the Luxemburg-norm 
\begin{align*}
\norm{u}_{L^\Phi(I;X)} := \inf \left\{ \lambda > 0 : \int_I \Phi \left( \frac{\norm{u}_X}{\lambda} \right) \ds \leq 1 \right\}.
\end{align*}
The Orlicz space $L^\Phi(I;X)$ is the space of all Bochner-measurable functions with finite Luxemburg-norm. For more details on Orlicz-spaces we refer to \cite{DiHaHaRu}. Given $h \in I$ and $u :I \to X$ we define the difference operator $\tau_h: \set{u: I \to X} \to \set{u: I\cap I - \set{h} \to X} $ via $\tau_h(u) (s) := u(s+h) - u(s)$. The Besov-Orlicz space $B^\alpha_{\Phi,r}(I;X)$ with differentiability $\alpha \in (0,1)$, integrability $\Phi$ and fine index $r \in [1,\infty]$ is defined as the space of Bochner-measurable functions with finite Besov-Orlicz norm $\norm{\cdot}_{B^\alpha_{\Phi,r}(I;X)}$, where
\begin{align} \label{def:Besov01}
\begin{aligned}
\norm{u}_{B^\alpha_{\Phi,r}(I;X)} &:= \norm{u}_{L^{\Phi}(I ;X)} + \seminorm{u}_{B^{\alpha}_{\Phi,r}(I;X)}, \\
\seminorm{u}_{B^\alpha_{\Phi,r}(I;X)} &:= \begin{cases} \left( \int_{I} h^{-r\alpha} \norm{\tau_h u}_{L^\Phi(I\cap I - \set{h};X)}^r \frac{\dd h}{h} \right)^\frac{1}{r} & \text{ if } r \in [1,\infty), \\
\esssup_{h \in I} h^{-\alpha} \norm{\tau_h u}_{L^\Phi(I\cap I - \set{h};X)} & \text{ if } r = \infty.
\end{cases} 
\end{aligned}
\end{align} 
The case $r = \infty$ is commonly called Nikolskii-Orlicz space and abbreviated by $N^{\alpha,\Phi} = B^{\alpha}_{\Phi,\infty}$. When $\Phi(t) = t^p$ we write $B^\alpha_{p,r}(I;X)$ and call it Besov space.

Similarly, given a Banach space $\left(Y, \norm{\cdot}_Y \right)$, we define $L^q(\Omega;Y)$ as the Bochner space of Bochner-measurable functions $u: \Omega \to Y$ satisfying $\omega \mapsto \norm{u(\omega)}_Y \in L^q(\Omega)$. The space $L^q_{\mathcal{F}}(\Omega \times I;X)$ denotes the subspace of $X$-valued progressively measurable processes. We abbreviate the notation $L^q_\omega L^r_t L^s_x := L^q(\Omega;L^r(I;L^s(\mathcal{O}))) $ and $L^{q-} := \bigcap_{r< q} L^r$.

Let $\mathfrak{U}$ be a separable Hilbert space and $W$ be an $\mathfrak{U}$-valued cylindrical Wiener process with respect to $(\mathcal{F}_t)$. We denote by $L_2(\mathfrak{U};L^2_x)$ the space of Hilbert-Schmidt operators.

\subsection{Assumptions and Concept of solutions}
We assume that $G$ is progressively measurable  
\begin{align*}
G : \Omega \times I \times L^2_x  \to L_2(\mathfrak{U};L^2_x)
\end{align*}
and satisfies the following:
\begin{enumerate}
\item (sublinear growth) For all $(\omega,t) \in \Omega \times I$ and $v \in L^2_x$
\begin{align} \label{ex:Sublinear}
\norm{G(\omega,t, v)}_{L_2(\mathfrak{U};L^2_x)}^2 \lesssim 1+ \norm{v}_{L^2_x}^2.
\end{align}
\item (Lipschitz) For all $(\omega,t) \in \Omega \times I$ and $v,w \in L^2_x$
\begin{align}\label{ex:Lipschitz}
\norm{G(\omega,t, v) - G(\omega,t, w)}_{L_2(\mathfrak{U};L^2_x)}^2 \lesssim  \norm{v - w}_{L^2_x}^2.
\end{align}
\end{enumerate}

The concept of weak solutions is defined as follows.
\begin{definition} \label{def:weakCouple}
A vector field~$u$ and scalar function~$\pi$ are called weak solution to \eqref{eq:intro_pStokes} if 
\begin{enumerate}
\item $u \in L^2_\omega C_t^0 L^2_{\Div} \cap L^p_\omega L^p_t W^{1,p}_{0,x}$ is $(\mathcal{F}_t)$-adapted,
\item $\pi = \pi^{1} + \pi^{2} \in  L^{p'}_\omega W^{1,p'}_t L^{p'}_{0,x} + L^{2}_\omega C^0_t \big( W^{1,2}_{x} \cap L^2_{0,x}  \big)$ is $(\mathcal{F}_t)$-adapted,
\item $\pi^2$ is a centered $(\mathcal{F}_t)$-martingale,
\item  for all $t \in I$, $\xi \in L^2_x \cap W^{1,p}_{0,x}$ and $\mathbb{P}$-a.s. it holds
\begin{align} \label{eq:WeakSolutionCoupled}
\begin{aligned}
&\int_{\mathcal{O}} \left( u(t) - u_0 - \int_0^t G\big(s, u(s) \big) \dd W(s) \right) \cdot \xi \dd x + \int_{\mathcal{O}}  \int_0^t S( \varepsilon u) \ds :\nabla \xi \dd x\\
&\hspace{4em}  - \int_{\mathcal{O}}  \pi^{1}(t) \Div \xi \dd x + \int_{\mathcal{O}}  \nabla \pi^{2}(t) \cdot \xi \dd x =0.
\end{aligned}
\end{align}
\end{enumerate}
\end{definition}
The existence of a weak solution is settled in~\cite{2022arXiv220902796W} (see also~\cite{Breit2015} for the construction of weak solutions to~\eqref{eq:intro_pStokes} including convection).

\subsection{Summary of the main results}
We shortly summarize the main findings of the paper. The results are three-fold. Let~$u_{h,\tau}$ and~$\pi_{h,\tau}$ denote velocity and pressure approximation, respectively. For the exact definition we refer to Section~\ref{sec:FullyDisc}.

First, we carefully trace the assumptions on the data approximation and its influence on the stability of the algorithm. We propose two scales for the data approximation. On the first scale, we derive uniform estimates
\begin{subequations}
\begin{align} \label{eq:introReg}
u_{h,\tau} &\in L^2_\omega L^\infty_t L^2_x \cap B^{1/2}_{2,\infty} L^2_\omega L^2_x \cap L^p_\omega L^p_t W^{1,p}_{0,x}, \\
\pi_{h,\tau} &\in  L^{p'}_\omega W^{1,p'}_t L^{p'}_x + \big( L^2_\omega L^\infty_t W^{1,2}_x \cap B^{1/2}_{r,\infty} L^2_\omega W^{1,2}_x \big),
\end{align}
\end{subequations}
for any $r \in [2,\infty)$, cf. Theorem~\ref{thm:EnergyBoundsVelocity} and Theorem~\ref{thm:PressureTimeStepping}.

The second scale strengthens the assumptions on the data approximation. This leads to the improved stability result
\begin{subequations}
\begin{align}
u_{h,\tau} &\in L^2_\omega \big( L^\infty_t L^2_x \cap B^{1/2}_{2,\infty} L^2_x \big) \cap L^p_\omega L^p_t W^{1,p}_{0,x}, \\
\pi_{h,\tau} &\in  L^{p'}_\omega W^{1,p'}_t L^{p'}_x +  L^2_\omega \big( L^\infty_t W^{1,2}_x \cap B^{1/2}_{r,\infty} W^{1,2}_x \big),
\end{align}
\end{subequations}
for any $r \in [2,\infty)$, cf. Lemma~\ref{lem:VelocityReg} and Lemma~\ref{lem:PressureStrong}. In other words, the second scale allows us to measure regularity of time increments as a stochastic process.

Second, we derive an error decomposition, cf. Theorem~\ref{thm:ErrorEstimates} and Theorem~\ref{lem:ErrorEstimateBesov}, for the velocity error of exactly divergence-free approximations on the natural scale~\eqref{eq:introReg}, i.e.,

\begin{subequations}\label{eq:introEstFull}
\begin{align}
\begin{aligned} \label{eq:introEst01}
\norm{ u - u_{h,\tau} }_{L^2_\omega L^\infty_t L^2_x} &+ \norm{ V(\varepsilon u) - V(\varepsilon u_{h,\tau}) }_{L^2_\omega L^2_t L^2_x}  \\
&\lesssim C_{\mathrm{init}} + C_{L^\infty} +  C_{\mathrm{best}} + C_{G},
\end{aligned}
\end{align}
and
\begin{align}\label{eq:introEst02}
\norm{u - u_{h,\tau} }_{B^{1/2}_{2,\infty} L^2_\omega L^2_x} \lesssim C_{\mathrm{init}} + C_{B^{1/2}_{2,\infty}}+ C_{\mathrm{best}} + C_{G}  + C_{V}.
\end{align}
\end{subequations}
Here, $C_{\mathrm{init}}$ and $C_{G}$ correspond to data approximation errors; $C_{L^\infty}$ and $C_{B^{1/2}_{2,\infty}}$ are errors induced by the spatial discretization measured on respective time scales; $C_V$ measures temporal oscillation of the non-linear gradient of the target velocity and $C_{\mathrm{best}}$ is the distance of the best-approximation and the target velocity in a non-linear distance. To the best of our knowledge~\eqref{eq:introEst02} is even new for the deterministic $p$-Laplace system.

Third, we present an example for the data approximation and the spatial discretization such that the error terms in~\eqref{eq:introEstFull} can be controlled. The resulting velocity approximation converges with optimal rates towards the target velocity with respect to the natural distance, i.e.,
\begin{align*}
\norm{ u - u_{h,\tau} }_{L^2_\omega L^\infty_t L^2_x} &+ \norm{ V(\varepsilon u) - V(\varepsilon u_{h,\tau}) }_{L^2_\omega L^2_t L^2_x} + \norm{u - u_{h,\tau} }_{B^{1/2}_{2,\infty} L^2_\omega L^2_x} \lesssim h + \sqrt{\tau}.
\end{align*}
The result is presented in Theorem~\ref{thm:Convergence}.

\section{Fully discrete ansatz} \label{sec:FullyDisc}
This section contains the fully discrete abstract discretization. We split it into a generic spatial discretization, a robust time discretization based on the approximation of averaged values and an abstract data approximation. Ultimately, we present a numerical scheme for the approximation of~\eqref{eq:intro_pStokes}.

\subsection{Generic spatial discretization}
Let $h \in (0,1]$ denote the spatial discretization parameter and
\begin{align}
V_h &\subset W^{1,\infty}_x \cap W^{1,1}_{0,x} \quad \text{ and } \quad 
Q_h \subset L^\infty_x \cap L^1_{0,x},
\end{align}
be finite dimensional velocity and pressure approximate spaces, respectively. The subspace of discretely divergence free velocities is denoted by
\begin{align*}
V_{h,\Div} := \left\{ v \in V_h | \, \forall q \in Q_h: \left( q, \Div v \right) = 0 \right\}
\end{align*} 
and its orthogonal complement (with respect to $L^2_x$)
\begin{align*}
V_{h,\Div}^\perp := \left\{v \in V_h | \, \forall w \in V_{h,\Div}: \left( v,w \right) = 0 \right\}.
\end{align*}
 In this way $V_h$ splits into
\begin{align*}
V_h = V_{h,\Div} \oplus V_{h,\Div}^\perp.
\end{align*}
Moreover, we denote by $\Pi_{\Div}$ and $\Pi_{\Div}^\perp$ the $L^2_x$-projections on $V_{h,\Div}$ and $V_{h,\Div}^\perp$, respectively.

\subsection{Robust temporal discretization} \label{sec:time-disc}
Let $N \in \mathbb{N}$. Denote by $\tau = T/(N+1)$ the temporal step size, $t_n = n\tau$ the equidistant grid points and $J_n = [t_n - \tau/2, t_n + \tau/2]$ the intervals of size~$\tau$ centered at~$t_n$. We set $J_0 := [0,\tau/2]$ and denote by $\mean{\cdot}_n = \dashint_{J_n} \cdot \dd t$ the mean integral on the interval $J_n$. 

Formally, the analytic solution satisfies 
\begin{align*}
\mean{u}_1 - u_0 - \dashint_{J_1} \int_0^t \Div S\big(\varepsilon u(s)\big) \dd s \dd t + \nabla \mean{\pi}_1 - \nabla \pi_0 = \dashint_{J_1} \int_0^t G\big(s,u(s)\big) \dd W(s) \dd t
\end{align*}
and
\begin{align*}
\mean{u}_n - \mean{u}_{n-1} - &\dashint_{J_n} \int_{t-\tau}^t \Div S \big(\varepsilon u(s)\big) \dd s \dd t + \nabla \mean{\pi}_n - \nabla \mean{\pi}_{n-1}  \\
&\quad = \dashint_{J_n} \int_{t-\tau}^t G\big(s,u(s) \big) \dd W(s) \dd t.
\end{align*}
Due to Fubini's theorem this is equivalent to 
\begin{align*}
\mean{u}_1 - u_0 - \int a_1(t) \Div S\big(\varepsilon u(t) \big) \dd t + \nabla \mean{\pi}_1  - \nabla \pi_0 =  \int a_1(t) G\big(t,u(t)\big) \dd W(t)
\end{align*}
and
\begin{align}\label{eq:FormalAnalytic}
\begin{aligned}
\mean{u}_{n} - \mean{u}_{n-1} - &\int a_n(t) \Div S\big(\varepsilon u(t) \big) \dd t + \nabla \mean{\pi}_n  - \nabla \mean{\pi}_{n-1}  \\
&\quad =  \int a_n(t) G\big(t,u(t)\big) \dd W(t),
\end{aligned}
\end{align}
where
\begin{subequations} \label{eq:Weights}
\begin{align} \label{eq:WeightInit}
a_1(t) &=  1_{J_0}(t) + \frac{t_1 + \tau/2 - t}{\tau} 1_{J_1}(t), \\\label{eq:WeightN}
a_n(t) &= \frac{t - (t_{n-1} - \tau/2)}{\tau} 1_{J_{n-1}}(t)  + \frac{t_{n} + \tau/2 - t}{\tau} 1_{J_n}(t).
\end{align}
\end{subequations}
Notice that $\int a_n(t) \dd t = \tau$. Here $1_{J}$ denotes the indicator function on~$J$.

Moreover, we define
\begin{subequations}\label{eq:Waver}
\begin{align} \label{eq:WaverInit}
\Delta_1 \mathbb{W} &:=  \mean{W}_{J_1} = \dashint_{J_1} W(t) \dd t  = \dashint_{J_1} \int_0^t \dd W(s) \dd t = \int a_1(t) \dd W(t),\\ \label{eq:WaverN}
\Delta_n \mathbb{W} &:= \mean{W}_{J_n} - \mean{W}_{J_{n-1}} = \dashint_{J_n} W(t) - W(t-\tau) \dd t = \int a_n(t) \dd W(t).
\end{align}
\end{subequations}
Since~$\Delta_n \mathbb{W}$ relies on values of~$W$ up to~$t_n + \tau/2$ a natural choice for a discrete filtration is
\begin{align}
\mathcal{F}^N_n := \mathcal{F}_{t_{n} + \tau/2}.
\end{align}
Additionally, we set $\mathcal{F}_0^N := \mathcal{F}_0$.

The distribution of the random vector~$\Delta \mathbb{W}:= (\Delta_1 \mathbb{W}, \ldots, \Delta_N \mathbb{W})$ can be computed as in~\cite[Lemma~34]{Diening2022}. It is a centered Gaussian random variable with variance
\begin{align*}
\mathbb{E}\left[ \left( \Delta_n \mathbb{W}, \mathfrak{u}_1 \right)_{\mathfrak{U}} \left( \Delta_m \mathbb{W}, \mathfrak{u}_2 \right)_{\mathfrak{U}} \right] = \left( \mathfrak{u}_1, \mathfrak{u}_2 \right)_{\mathfrak{U}} \int a_n(t) a_m(t) \dd t,
\end{align*}
for all $\mathfrak{u}_1, \mathfrak{u}_2 \in \mathfrak{U}$. A simple sampling algorithm can also be found in~\cite[Section~4.3]{Diening2022}. 

The naive evaluation of the local integrals via a one point quadrature rule results in 
\begin{align} \label{eq:QuadratureDet}
\int a_n(t) \Div S\big(\varepsilon u(t)\big) \dd t &\approx \Div S\big(\varepsilon u(r_n) \big) \tau, \\ \label{eq:QuadratureSto}
  \int a_n(t) G\big(t, u(t)\big) \dd W(s) &\approx G\big(r_n, u(r_n)\big) \Delta_n \mathbb{W}.
\end{align}
We want to stress that stochastic integration is highly sensitive with respect to the time instance~$r_n$. This is in sharp contrast to deterministic integration. 

\subsection{Data approximation}
We also need to discretize the noise coefficient in time. Instead of prescribing an explicit discretization rule we allow for a class of possible discretizations. 

\begin{assumption}[Data stability] \label{ass:DataApprox}
We assume there exists a constant $C>0$ (independent of $n$ and $N$) such that for all $n \in \{1, \ldots,N\}$ the following conditions are satisfied:
\begin{enumerate}
\item (measurability) $G_n:\Omega \times L^2_x \to L_2(\mathfrak{U};L^2_x)$ is $\mathcal{F}_{(n-2) \vee 0}^N$-measurable.
\item (sublinear growth) For all $\mathcal{F}_{(n-2) \vee 0}^N$-measurable $v_n \in L^2_{\omega} L^2_x$ 
\begin{align}\label{ass:Sublinear}
\mathbb{E} \left[ \norm{G_n(v_n)}_{L_2(\mathfrak{U};L^2_x)}^2 \right] \leq C(1+ \norm{v_n}_{L^2_\omega L^2_x}^2).
\end{align}
\item (Lipschitz) For all $\mathcal{F}_{(n-2) \vee 0}^N$-measurable $v_n,w_n \in L^2_\omega L^2_x$
\begin{align} \label{ass:Lipschitz}
\mathbb{E} \left[ \norm{G_n(v_n) - G_n(w_n)}_{L_2(\mathfrak{U};L^2_x)}^2 \right] \leq C \norm{v_n-w_n}_{L^2_\omega L^2_x}^2.
\end{align}
\end{enumerate}
\end{assumption}
For a concrete choice of $G_n$ we refer to Example~\ref{ex:GandGn}.

\subsection{The algorithm}
Motivated by the formal derivation~\eqref{eq:FormalAnalytic} and the techniques used in~\cite{MR4286257,Diening2022}, we propose a new algorithm based on time-averaged values of the analytic solution. Time averages relax the regularity requirements for the analytic solution and still allow for optimal convergence results. 

\subsubsection{Initial condition}
We can only prescribe the initial velocity $u_0^h \in V_h$. The initial pressure is defined as the solution $\pi_0^h \in Q_h$ to 
\begin{align} \label{eq:InitPressure01}
\forall\xi \in V_{h,\Div}^{\perp}: \quad \left(\pi_0^h ,\Div \xi \right) = \left( u_0^h, \xi \right)
\end{align}
or equivalently
\begin{align}\label{eq:InitPressure02}
\pi_0^h = \argmin_{q \in Q_h} \norm{\nabla^h q + \Pi_{\Div}^\perp u_0^h}_{L^2_x}.
\end{align}
Here, the discrete gradient~$\nabla^h : L^1_{0,x} \to V_h$ is defined by
\begin{align} \label{eq:DiscreteGrad}
\forall v \in V_h:\quad \left(\nabla^h q, v \right) = - \left(q, \Div v \right).
\end{align}
Note that $\nabla^h Q_h \subset V_{h,\Div}^\perp$.

\subsubsection{Time stepping}
We want to find $(u, \pi) \in \left[ V_h \times Q_h \right]^N$ such that for all $(\xi, q) \in V_h \times Q_h$, $n \in [N]$ and $\mathbb{P}$-a.s.
\begin{subequations} \label{eq:CoupledAlgo}
\begin{align}\label{eq:CoupledAlgo01}
\left(d_n u, \xi \right) + \tau \left( S(\varepsilon u_n), \nabla \xi \right) - \left(d_n \pi, \Div \xi \right) &= \left( G_n(u_{(n-2)\vee 0}) \Delta_n \mathbb{W}, \xi \right), \\ \label{eq:CoupledAlgo02}
\left(\Div u_n, q \right) &= 0,
\end{align}
\end{subequations}
where $d_n v:= v_{n} - v_{n-1}$ and $\Delta_n \mathbb{W}$ is given by~\eqref{eq:Waver}.

\subsubsection{Decoupled time stepping}
Equation~\eqref{eq:CoupledAlgo} decouples into a velocity equation that is independent of the pressure and a complementary equation that allows for the reconstruction of the pressure.

Indeed, we first find $u \in [V_{h,\Div}]^N$ such that for all $\xi \in V_{h,\Div}$, $n \in [N]$ and $\mathbb{P}$-a.s.
\begin{align}\label{eq:velocity001}
\left(d_n u, \xi \right) + \tau \left( S(\varepsilon u_n), \nabla \xi \right) &= \left( G_n(u_{(n-2)\vee 0}) \Delta_n \mathbb{W}, \xi \right).
\end{align}

Afterwards the pressure $\pi \in [Q_h]^N$ is reconstructed such that for all $\xi \in V_{h,\Div}^\perp$, $n \in [N]$ and $\mathbb{P}$-a.s.
\begin{align} \label{eq:pressure001}
\left( d_n \pi, \Div \xi \right) &= \left(d_n u, \xi \right) + \tau \left( S(\varepsilon u_n), \nabla \xi \right)- \left( G_n(u_{(n-2)\vee 0}) \Delta_n \mathbb{W}, \xi \right).
\end{align}

\section{Stability} \label{sec:Stability}
This section contains stability results for the abstract velocity and pressure approximations. 

Since regularity moderates the speed of convergence, it is desirable to find norms as strong as possible that are uniformly bounded in terms of the data. We specifically trace the growth conditions on~$G_n$ and its influence on the a priori estimates. It turns out that the time integrability is a crucial ingredient.  

If~$G_n$ satisfies a growth condition on~$L^2_t$, cf.~\eqref{ass:Sublinear}, we verify an a priori estimates for the velocity  
\begin{align*}
u &\in L^2_\omega L^\infty_t L^2_x \cap L^p_\omega L^p_t W^{1,p}_{0,x} \cap B^{1/2}_{2,\infty} L^2_\omega L^2_x
\end{align*}
and the pressure
\begin{align*}
\pi &\in L^{p'}_\omega W^{1,p'}_t L^{p'}_x + ( L^2_\omega L^\infty_t W^{1,2}_x \cap B^{1/2}_{\infty-,\infty} L^2_\omega W^{1,2}_x ).
\end{align*}
Assuming merely a control on the $L^2_t$ scale, it is only possible to measure time differences of expectations rather than expectations of time differences. 

Strengthening the control to $L^\infty_t$, cf. Assumption~\ref{ass:StrongerCondition}, additionally enables an a priori result for the expectation of time differences
\begin{align*}
u &\in L^2_\omega L^\infty_t L^2_x \cap L^p_\omega L^p_t W^{1,p}_{0,x} \cap  L^2_\omega B^{1/2}_{2,\infty} L^2_x, \\
\pi &\in L^{p'}_\omega W^{1,p'}_t L^{p'}_x + ( L^2_\omega L^\infty_t W^{1,2}_x \cap  L^2_\omega B^{1/2}_{\infty-,\infty} W^{1,2}_x ).
\end{align*}

For the sake of presentation we restrict to the case~$\kappa = 0$. The results carry over to~$\kappa > 0$ with appropriate changes.

\subsection{A first scale of a priori estimates}
Motivated by the stability of continuous stochastic integration, see e.g.~\cite[Theorem~3.2]{MR4116708}, it is possible to derive estimates for discrete stochastic integration. We want to stress again that the temporal regularity of the integrand essentially determines the temporal regularity of the stochastic integral.

If $G_n(v_n) \in L^2_\omega L_2(\mathfrak{U};L^2_x)$ is $\mathcal{F}^N_{(n-2)\vee 0}$-measurable uniformly in $n$, then the associated stochastic integral
\begin{align*}
[N] \ni M \mapsto \sum_{n=1}^M G_n(v_n) \Delta_n \mathbb{W}
\end{align*}
is bounded on $L^2_\omega L^\infty_t L^2_x$. This is one of the main ingredients towards an a priori estimate for solutions to~\eqref{eq:CoupledAlgo}.

We separate the discussion on the velocity and the pressure.

\subsubsection{Velocity estimate on $L^2_\omega L^\infty_t L^2_x \cap L^p_\omega L^p_t W^{1,p}_{0,x} \cap B^{1/2}_{2,\infty} L^2_\omega L^2_x$}

The next theorem addresses well posedness of the time stepping~\eqref{eq:velocity001} and suitable stability results.

\begin{theorem} \label{thm:EnergyBoundsVelocity}
Let $u_0 \in L^2_{\omega} V_{h,\Div}$ be $\mathcal{F}_0$-measurable and Assumption~\ref{ass:DataApprox} be satisfied. Then the following hold true:
\begin{enumerate}
\item (Existence) There exists a random variable $u \in [V_{h,\Div}]^N$ solving~\eqref{eq:velocity001}.
\item (Measurability) $u_n$ is $\mathcal{F}_{n}^N$-measurable.
\item (Stability)
It holds
\begin{align} \label{eq:AprioriFirst}
\begin{aligned}
&\mathbb{E} \left[ \max_{n \in [N]} \norm{u_n}_{L^2_x}^2 + \sum_{n \in [N]} \tau \norm{\varepsilon u_n}_{L^p_x}^p \right]  + \max_{k \in [N]} \sum_{n = k}^N \mathbb{E}\left[ \frac{\norm{ u_n - u_{n-k}}_{L^2_x}^2}{k} \right]  \\
&\hspace{5em} \lesssim \mathbb{E}\left[ \norm{u_0}_{L^2_x}^2 \right] + 1.
\end{aligned}
\end{align}
\end{enumerate}
\end{theorem}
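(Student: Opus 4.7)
The plan is to handle existence pathwise via monotone operator theory on the finite-dimensional space $V_{h,\Div}$, to establish measurability by induction, and to extract all three ingredients of~\eqref{eq:AprioriFirst} from a single energy identity combined with discrete Burkholder--Davis--Gundy (BDG) and Gronwall arguments, crucially exploiting the two-step measurability offset built into Assumption~\ref{ass:DataApprox}.

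\textbf{Existence, measurability, and energy identity.} At time level $n$, equation~\eqref{eq:velocity001} reads pathwise: find $u_n \in V_{h,\Div}$ with
\[
(u_n, \xi) + \tau (S(\varepsilon u_n), \varepsilon \xi) = (u_{n-1} + G_n(u_{(n-2)\vee 0}) \Delta_n \mathbb{W}, \xi) \qquad \forall \xi \in V_{h,\Div}.
\]
The associated operator on $V_{h,\Div}$ is strictly monotone (since $S$ is strictly monotone for $p > 1$), continuous, and coercive (the quadratic $(v,v)$-term dominates), so Browder--Minty yields a unique $u_n$ depending continuously on the right-hand side; the $\mathcal{F}_n^N$-measurability then propagates inductively since both $u_{n-1}$ and $G_n(u_{(n-2)\vee 0}) \Delta_n \mathbb{W}$ are $\mathcal{F}_n^N$-measurable. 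Testing with $\xi = u_n$, using the discrete chain rule $(d_n u, u_n) = \tfrac{1}{2}(\|u_n\|^2 - \|u_{n-1}\|^2) + \tfrac{1}{2}\|d_n u\|^2$ together with $(S(\varepsilon u_n), \varepsilon u_n) = \|\varepsilon u_n\|_{L^p}^p$ (for $\kappa = 0$) and summing produces
\[
\tfrac{1}{2}\|u_M\|^2 + \tfrac{1}{2}\sum_{n=1}^M \|d_n u\|^2 + \sum_{n=1}^M \tau \|\varepsilon u_n\|_{L^p}^p = \tfrac{1}{2}\|u_0\|^2 + \sum_{n=1}^M \left(G_n(u_{(n-2)\vee 0}) \Delta_n \mathbb{W}, u_n\right).
\]

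\textbf{Handling the noise and closing the maximal bound.} Because $G_n(u_{(n-2)\vee 0})$ is $\mathcal{F}_{(n-2)\vee 0}^N$-measurable and the support of $a_n$ lies in $[t_{n-2} + \tau/2, t_n + \tau/2]$, the step process $s \mapsto \sum_n a_n(s) G_n(u_{(n-2)\vee 0})$ is adapted to the natural filtration of $W$, so $\sum_n G_n \Delta_n \mathbb{W} = \int \sum_n a_n(s) G_n \dd W(s)$ is a genuine Itô integral. To use this, split $u_n = u_{(n-2)\vee 0} + (u_n - u_{(n-2)\vee 0})$ on the right-hand side. The first piece defines a continuous-time martingale whose quadratic variation is bounded via $\mathbb{E}[\|G_n \Delta_n \mathbb{W}\|^2] \lesssim \tau(1 + \mathbb{E}[\|u_{(n-2)\vee 0}\|^2])$ (variance formula for $\Delta_n \mathbb{W}$ combined with~\eqref{ass:Sublinear}); BDG plus Young's inequality absorbs its running maximum into the left-hand side. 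The complementary term is handled by $\|u_n - u_{(n-2)\vee 0}\|^2 \lesssim \|d_n u\|^2 + \|d_{n-1} u\|^2$ and absorbed into $\tfrac{1}{2}\sum \|d_n u\|^2$. A discrete Gronwall inequality on $\mathbb{E}[\max_{n \leq M} \|u_n\|^2]$ then closes the first two summands of~\eqref{eq:AprioriFirst}.

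\textbf{Besov-type term and main obstacle.} The case $k = 1$ of the last summand in~\eqref{eq:AprioriFirst} is immediate from $\sum \|d_n u\|^2$ being already controlled. For general $k$, iterating~\eqref{eq:velocity001} yields, against divergence-free test functions,
\[
u_n - u_{n-k} + \tau \sum_{j=n-k+1}^n A_j = \sum_{j=n-k+1}^n G_j(u_{(j-2)\vee 0}) \Delta_j \mathbb{W},
\]
with $A_j \in V_{h,\Div}$ defined by $(A_j, \xi) = (S(\varepsilon u_j), \varepsilon \xi)$. The stochastic partial sum is again an Itô integral, and the same variance computation gives $\mathbb{E}[\|\sum_j G_j \Delta_j \mathbb{W}\|^2] \lesssim k\tau$ uniformly in $k$, so after dividing by $k$ and summing over $n$ its contribution is controlled by $N\tau = T$. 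The delicate part is the dissipative sum $\tau \sum_j A_j$: the natural energy provides $L^p$-control on $\varepsilon u_j$ rather than $L^2$-control on $A_j$, and transferring this into a $k$-independent $L^2$-type Besov estimate requires the discrete extrapolation tool advertised in the introduction and developed in Appendix~\ref{app:Appendix}, relating lower moments of discrete maximal processes. This extrapolation step is the principal technical obstacle of the proof.
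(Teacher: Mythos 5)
Your treatment of existence, measurability, and the $L^2_\omega L^\infty_t L^2_x\cap L^p_\omega L^p_tW^{1,p}_{0,x}$ bound follows the paper's route (energy identity, delay of the test function to $u_{(n-2)\vee 0}$, It\^o isometry, BDG, Gronwall). One imprecision there: the partial sums $\sum_{n=1}^M\big(G_n(u_{(n-2)\vee 0})\Delta_n\mathbb{W},u_{(n-2)\vee 0}\big)$ are \emph{not} a genuine martingale (nor evaluations of a continuous It\^o integral at the grid points), because $a_M$ and $a_{M+1}$ both charge $J_M$; the paper has to introduce an explicit compensator $\mathrm{E}_M$ (cf.~\eqref{eq:LocalDecomposition}--\eqref{eq:LocalMartingaleError}) and estimate $\max_M\abs{\mathrm{E}_M}$ separately before BDG applies. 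This is routine but must be done.

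The genuine gap is in your last paragraph. The term $\tau\sum_{j=n-k+1}^n A_j$ does not require $L^2$-control of $A_j$, and the discrete extrapolation of Appendix~\ref{app:Appendix} is neither used in the paper's proof of this theorem nor capable of supplying such control (it relates lower moments of dominated maximal processes and enters only in Lemma~\ref{lem:PressureStrong}). The correct, and elementary, argument is: test the telescoped equation~\eqref{eq:Global} with $\xi=u_n$, so that the telescoping of $(u_n-u_{n-k},u_n)=\tfrac12(\norm{u_n}^2-\norm{u_{n-k}}^2+\norm{u_n-u_{n-k}}^2)$ produces the increment on the left, while the dissipative term is paired as $\big(\sum_l\tau S(\varepsilon u_l),\varepsilon u_n\big)$ and estimated by H\"older in the $L^{p'}$--$L^p$ duality, Young's inequality and discrete Fubini, giving $k\,\mathbb{E}\big[\sum_n\tau(\norm{S(\varepsilon u_n)}_{L^{p'}}^{p'}+\norm{\varepsilon u_n}_{L^p}^p)\big]$; the factor $k$ exactly cancels the division by $k$, and the resulting quantity is bounded by the Part~A energy since $\norm{S(\varepsilon u)}_{L^{p'}}^{p'}=\norm{\varepsilon u}_{L^p}^p$. (The stochastic term is then handled by decorrelating against $u_{(n-k-1)\vee 0}$ and It\^o's isometry, much as you describe.) As written, your proposal leaves the principal step of Part~B uncompleted and points to a tool that would not complete it.
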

\begin{proof}
The existence and uniqueness of the solution~$u_n$ to~\eqref{eq:velocity001} is a standard result in the theory of monotone operators. Moreover, the $\mathcal{F}_n^N$-measurability of~$u_n$ immediately follows from the measurability of $u_{k}$, $k < n$, $G_n$ and $\Delta_n \mathbb{W}$. For more details we refer to~\cite[Section~3]{MR3607728} where the construction of the solution to a related problem is discussed.

We split the derivation of~\eqref{eq:AprioriFirst} into two parts.

\textbf{Part~A -- $L^2_\omega L^\infty_t L^2_x \cap L^p_\omega L^p_t W^{1,p}_{0,x}$:} 
Within this part we aim to verify
\begin{align} \label{eq:Estimate01}
&\mathbb{E} \left[ \max_{n \in [N]} \norm{u_n}_{L^2_x}^2  +  \sum_{n \in [N]} \tau \norm{\varepsilon u_n}_{L^p_x}^p \right]  \lesssim \mathbb{E}\left[ \norm{u_0}_{L^2_x}^2 \right] + 1.
\end{align}
%

Choose~$\xi = u_n \in V_{h,\Div}$ in~\eqref{eq:velocity001}, use $(a-b)\cdot a = 2(\abs{a}^2 +\abs{a-b}^2 - \abs{b}^2)$ and $S(A):A = \abs{A}^p$ to obtain
\begin{align*}
&\frac{1}{2} \left( \norm{u_n}_{L^2_x} -\norm{u_{n-1}}_{L^2_x}^2 + \norm{u_n - u_{n-1}}_{L^2_x}^2 \right) + \tau \norm{\varepsilon u_n}_{L^p_x}^p \\
&\quad = \left( G_n( u_{(n-2)\vee 0}) \Delta_n \mathbb{W}, u_n \right).
\end{align*}
Summation over $n \in \{1,\ldots,M\}$ for some $M \leq N$ yields
\begin{align*}
\norm{u_M}_{L^2_x} &+ \sum_{n=1}^M \norm{u_n - u_{n-1}}_{L^2_x}^2 + 2\sum_{n=1}^M \tau \norm{\varepsilon u_n}_{L^p_x}^p \\
&\quad = \norm{u_0}_{L^2_x} + 2 \sum_{n=1}^M \left( G_n( u_{(n-2)\vee 0}) \Delta_n \mathbb{W}, u_n \right).
\end{align*}
Take the maximum over $M \in [N]$ and expectation
\begin{align} \label{eq:EstimateMAX}
\begin{aligned}
&\mathbb{E}\left[ \max_{M \in [N]} \norm{u_M}_{L^2_x}^2 + \sum_{n=1}^N \norm{u_n - u_{n-1}}_{L^2_x}^2 + \sum_{n=1}^N \tau \norm{\varepsilon u_n}_{L^p_x}^p \right] \\
&\hspace{3em} \lesssim \mathbb{E} \left[ \norm{u_0}_{L^2_x}^2 + \max_{M \in [N]} \sum_{n=1}^M \left( G_n( u_{(n-2)\vee 0}) \Delta_n \mathbb{W}, u_n \right) \right].
\end{aligned}
\end{align}
We need to estimate the maximum of a discrete stochastic integral. Usually the Burkholder-Davis-Gundy inequality allows us to bound the maximal process of a martingale in terms of its quadratic variation. However, since $u_n$ is $\mathcal{F}_{n}^N$- measurable and $\Delta_n \mathbb{W}$ relies on $W(t)$ for $t \in J_{n-1} \cup J_n$, cf.~\eqref{eq:WaverN},
\begin{align*}
M \mapsto \sum_{n=1}^M \left( G_n( u_{(n-2)\vee 0}) \Delta_n \mathbb{W}, u_n \right)
\end{align*}
does not define a discrete martingale. We artificially delay the test function so that it is uncorrelated to the Gaussian increment~$\Delta_n \mathbb{W}$,
\begin{align*}
&\mathbb{E}\left[ \max_{M \in [N]} \sum_{n=1}^M \left( G_n( u_{(n-2)\vee 0}) \Delta_n \mathbb{W}, u_n \right) \right] \\
&\leq \mathbb{E}\left[ \max_{M \in [N]} \sum_{n=1}^M \left( G_n( u_{(n-2)\vee 0}) \Delta_n \mathbb{W}, u_n - u_{(n-2) \vee 0} \right) \right] \\
&\quad + \mathbb{E}\left[ \max_{M \in [N]} \sum_{n=1}^M \left( G_n( u_{(n-2)\vee 0}) \Delta_n \mathbb{W}, u_{(n-2) \vee 0} \right) \right] \\
&= \mathrm{I} + \mathrm{II}.
\end{align*}

H\"older's and Young's inequalities together with $u_{n} - u_{(n-2)\vee 0}  = \sum_{l=1}^2 u_{(n-l+1) \vee 0} - u_{(n-l)\vee 0}$ ensure
\begin{align*}
\mathrm{I} & \lesssim \delta \mathbb{E} \left[ \sum_{n=1}^N \norm{u_n - u_{(n-2)\vee 0} }_{L^2_x}^2 \right] + c_\delta \mathbb{E} \left[\sum_{n=1}^N \norm{G_n( u_{(n-2)\vee 0}) \Delta_n \mathbb{W}}_{L^2_x}^2 \right]\\
&\lesssim \delta \mathbb{E} \left[ \sum_{n=1}^N \norm{u_n - u_{n-1} }_{L^2_x}^2 \right] + c_\delta \mathbb{E} \left[\sum_{n=1}^N \norm{G_n( u_{(n-2)\vee 0}) \Delta_n \mathbb{W}}_{L^2_x}^2 \right].
\end{align*}
The first term can be absorbed by the left hand side of~\eqref{eq:EstimateMAX}. The second term is controlled by It\^o's isometry,~$a_n \leq 1$ and~\eqref{ass:Sublinear},
\begin{align} \label{eq:EstimateStochastic001}
\begin{aligned}
&\mathbb{E} \left[\sum_{n=1}^N \norm{G_n( u_{(n-2)\vee 0}) \Delta_n \mathbb{W}}_{L^2_x}^2 \right] \leq  \mathbb{E} \left[ \sum_{n=1}^N  \tau \norm{G_n( u_{(n-2)\vee 0}) }_{L_2(\mathfrak{U};L^2_x)}^2 \right]  \\
&\hspace{4em} \leq C\left( 1+ \mathbb{E}\left[ \sum_{n=1}^N \tau \norm{u_{(n-2)\vee 0}}_{L^2_x}^2 \right] \right) \\
&\hspace{4em} \lesssim \sum_{n=1}^N \tau  \mathbb{E}\left[ \max_{m \in [n-2]} \norm{u_{m}}_{L^2_x}^2 \right] + \mathbb{E}\left[ \norm{u_0}_{L^2_x}^2 \right] + 1.
\end{aligned}
\end{align}

Next, we turn our attention to the discrete stochastic process
\begin{align} \label{eq:DiscreteStochasticIntegral}
Y_M &:= \sum_{n=1}^M \left( G_n( u_{(n-2)\vee 0}) \Delta_n \mathbb{W}, u_{(n-2) \vee 0} \right).
\end{align}
Note that~$Y$ is almost a centred martingale with respect to~$\mathcal{F}^N_n$. Indeed, similar to~\cite[(3.26)]{Diening2022} we can decompose 
\begin{align} \label{eq:LocalDecomposition}
Y_M = \mathbb{E}\left[Y_N \big| \mathcal{F}_M^N \right] -\mathrm{E}_M,
\end{align}
where the compensator is given by
\begin{align} \label{eq:LocalMartingaleError}
\begin{aligned}
\mathrm{E}_M &:=  \left(\int_{ J_{M} } a_{M+1}(t)  G_{M+1}( u_{(M-1)\vee 0}) \dd W(t) , u_{(M-1) \vee 0} \right)1_{\{ M +1\leq N\}}.
\end{aligned}
\end{align}

Using~\eqref{eq:LocalDecomposition} we decompose the maximum of~$Y$,
\begin{align*}
\mathrm{II} &= \mathbb{E}\left[ \max_{M \in [N]} Y_M \right] \\
&\leq \mathbb{E}\left[ \max_{M \in [N]} \abs{\mathrm{E}_M} \right] + \mathbb{E}\left[ \max_{M \in [N]} \mathbb{E}\left[Y_N \big| \mathcal{F}_{M}^N \right] \right]=: \mathrm{II}_1 + \mathrm{II}_2.
\end{align*}

Estimating the maximum by a sum and applying H\"older's and Young's inequalities 
\begin{align} \label{eq:ArtiEstim}
\begin{aligned}
\mathrm{II}_1 &\leq \mathbb{E}\left[ \left( \sum_{n=1}^N \abs{\mathrm{E}_n}^2 \right)^{1/2} \right] \\
&\leq \mathbb{E}\left[ \left( \sum_{n=1}^N \norm{\int_{ J_n } a_{n}(t)  G_{n}( u_{(n-2)\vee 0}) \dd W(t)}_{L^2_x}^2\norm{ u_{(n-2) \vee 0} }_{L^2_x}^2 \right)^{1/2} \right] \\
&\leq \delta \mathbb{E} \left[ \max_{M \in [N]} \norm{u_M}_{L^2_x}^2 \right] + c_\delta \mathbb{E}\left[ \sum_{n=1}^N \norm{\int_{J_{n}} a_n(t)  G_n( u_{(n-2)\vee 0}) \dd W(t)}_{L^2_x}^2 \right].
\end{aligned}
\end{align}

The martingale part is estimated by the Burkholder-Davis-Gundy inequality and~\eqref{eq:LocalDecomposition},
\begin{align*}
\mathrm{II}_2 &\lesssim \mathbb{E} \left[ \left( \sum_{n =1}^N \abs{ \mathbb{E}\left[Y_N \big| \mathcal{F}_{n}^N\right] - \mathbb{E}\left[Y_N \big| \mathcal{F}_{n-1}^N \right]  }^2 \right)^{1/2} \right] \\
&\lesssim \mathbb{E} \left[ \left( \sum_{n=1}^{N} \abs{Y_n - Y_{n-1}}^2 \right)^{1/2} \right] + \mathbb{E} \left[ \left( \sum_{n=1}^N \abs{\mathrm{E}_n}^2 \right)^{1/2} \right].
\end{align*}

Analogously to~\eqref{eq:ArtiEstim} one can show
\begin{align*}
&\mathbb{E} \left[ \left( \sum_{n=1}^{N} \abs{Y_n - Y_{n-1}}^2 \right)^{1/2} \right] \\
&\hspace{3em} \leq \delta \mathbb{E} \left[ \max_{M \in [N]} \norm{u_M}_{L^2_x}^2 \right] + c_\delta \mathbb{E}\left[ \sum_{n=1}^N \norm{\int a_n(t)  G_n( u_{(n-2)\vee 0}) \dd W(t)}_{L^2_x}^2 \right].
\end{align*}

It remains to use the It\^o isometry and the growth assumption~\eqref{ass:Sublinear} (as done in~\eqref{eq:EstimateStochastic001}) to find
\begin{align*}
\mathrm{II} \lesssim \delta \mathbb{E} \left[ \max_{M \in [N]} \norm{u_M}_{L^2_x}^2 \right] +  c_\delta\left( \sum_{n=1}^N \tau  \mathbb{E}\left[ \max_{m \in [n-2]} \norm{u_{m}}_{L^2_x}^2 \right] + \mathbb{E}\left[ \norm{u_0}_{L^2_x}^2 \right] + 1 \right).
\end{align*}

Overall, we have proved
\begin{align} \label{eq:Estimate0011}
\begin{aligned}
&\mathbb{E}\left[ \max_{M \in [N]} \norm{u_M}_{L^2_x}^2 + \sum_{n=1}^N \norm{u_n - u_{n-1}}_{L^2_x}^2 + \sum_{n=1}^N \tau \norm{\varepsilon u_n}_{L^p_x}^p \right] \\
&\hspace{3em} \lesssim  \delta \left( \mathbb{E} \left[ \sum_{n=1}^N \norm{u_n - u_{n-1} }_{L^2_x}^2 \right] + \mathbb{E} \left[ \max_{M \in [N]} \norm{u_M}_{L^2_x}^2 \right] \right) \\
&\hspace{4em} +  c_\delta\left( \sum_{n=1}^N \tau  \mathbb{E}\left[ \max_{m \in [n-2]} \norm{u_{m}}_{L^2_x}^2 \right] + \mathbb{E}\left[ \norm{u_0}_{L^2_x}^2 \right] + 1 \right).
\end{aligned}
\end{align}

Choosing $\delta > 0$ sufficiently small allows for an application of Gronwall's lemma. Thus,
\begin{align} \label{eq:GronwallResult}
\begin{aligned}
&\mathbb{E}\left[ \max_{M \in [N]} \norm{u_M}_{L^2_x}^2 \right] \lesssim \mathbb{E}\left[ \norm{u_0}_{L^2_x}^2 \right] + 1.
\end{aligned}
\end{align} 

The estimate for the symmetric gradient and increments of distance one follows from~\eqref{eq:Estimate0011} and~\eqref{eq:GronwallResult}
\begin{align} \label{eq:GradientEstimate}
\mathbb{E}\left[ \sum_{n=1}^N \norm{u_n - u_{n-1}}_{L^2_x}^2 + \sum_{n=1}^N \tau \norm{\varepsilon u_n}_{L^p_x}^p \right] \lesssim \mathbb{E}\left[ \norm{u_0}_{L^2_x}^2 \right] + 1.
\end{align}
This finishes the proof of~\eqref{eq:Estimate01}.

\textbf{Part~B -- $B^{1/2}_{2,\infty} L^2_\omega L^2_x$:} Within this part we aim to verify
\begin{align}
& \max_{k \in [N]} \sum_{n = k}^N \mathbb{E}\left[ \frac{\norm{ u_n - u_{n-k}}_{L^2_x}^2}{k} \right]  \lesssim \mathbb{E}\left[ \norm{u_0}_{L^2_x}^2 \right] + 1.
\end{align}

Write~\eqref{eq:velocity001} for $n = l$, sum up over $l \in \{n-k+1,\ldots,n\}$ and use the symmetry of $S(\varepsilon u)$
\begin{align} \label{eq:Global}
\begin{aligned}
&\left( u_n - u_{n-k},\xi \right)  \\
&\quad =- \left(\sum_{l=n-k+1}^n \tau S(\varepsilon u_l), \varepsilon \xi \right) +  \left(\sum_{l=n-k+1}^n G_l(u_{(l-2)\vee 0}) \Delta_l \mathbb{W}, \xi \right).
\end{aligned}
\end{align}
Choose~$\xi = u_{n}$, sum up for $n\in\{k,\ldots,N\}$ and take expectation
\begin{align} \label{eq:Temporaer001}
\begin{aligned}
&\mathbb{E}\left[  \sum_{n=N-k+1}^N \norm{u_n}_{L^2_x}^2  + \sum_{n=k}^N \norm{u_n - u_{n-k}}_{L^2_x}^2 \right]  =\mathbb{E} \left[ \sum_{n=0}^{k-1} \norm{u_n}_{L^2_x}^2 \right] \\
&\quad -2 \sum_{n=k}^N \mathbb{E}\left[ \left(\sum_{l=n-k+1}^n \tau S(\varepsilon u_l), \varepsilon u_{n} \right) - \left(\sum_{l=n-k+1}^n G_l(u_{(l-2)\vee 0}) \Delta_l \mathbb{W}, u_n \right) \right].
\end{aligned}
\end{align}

Clearly,
\begin{align*}
\mathbb{E} \left[ \sum_{n=0}^{k-1} \norm{u_n}_{L^2_x}^2 \right] \leq k \mathbb{E}\left[\max_{n \in [N]} \norm{u_n}_{L^2_x}^2 \right].
\end{align*}

The second term in the right hand side of~\eqref{eq:Temporaer001} is bounded by H\"older's and Young's inequalities and discrete Fubini's theorem
\begin{align} \label{eq:Sestimate}
\begin{aligned}
&\mathbb{E}\left[ \sum_{n=k}^N \left(\sum_{l=n-k+1}^n \tau S(\varepsilon u_l), \varepsilon u_{n} \right) \right] \\
&\hspace{3em} \leq \mathbb{E}\left[ \sum_{n=k}^N \sum_{l=n-k+1}^n \tau \norm{S(\varepsilon u_l)}_{L^{p'}_x} \norm{ \varepsilon u_{n} }_{L^p_x} \right] \\
&\hspace{3em}  \leq \mathbb{E}\left[ \sum_{n=k}^N \sum_{l=n-k+1}^n \tau \left( \norm{S(\varepsilon u_l)}_{L^{p'}_x}^{p'} +  \norm{ \varepsilon u_{n} }_{L^p_x}^p \right) \right] \\
&\hspace{3em}  \leq k \mathbb{E}\left[ \sum_{n=1}^N \tau \left( \norm{S(\varepsilon u_n)}_{L^{p'}_x}^{p'} +  \norm{ \varepsilon u_{n} }_{L^p_x}^p \right) \right].
\end{aligned}
\end{align}

Using that $\sum_{l=n-k+1}^n G_l(u_{(l-2)\vee 0}) \Delta_l \mathbb{W}$ is uncorrelated of~$u_{(n-k - 1) \vee 0}$ for all $k \leq n$, we artificially introduce increments in the third term in~\eqref{eq:Temporaer001}
\begin{align*}
&\sum_{n=k}^N  \mathbb{E}\left[\left(\sum_{l=n-k+1}^n G_l(u_{(l-2)\vee 0}) \Delta_l \mathbb{W}, u_n \right) \right] \\
&\quad =\sum_{n=k}^N  \mathbb{E}\left[\left(\sum_{l=n-k+1}^n G_l(u_{(l-2)\vee 0}) \Delta_l \mathbb{W}, u_n - u_{(n-k - 1) \vee 0} \right) \right].
\end{align*}

H\"older's and Young's inequalities show
\begin{align*}
&\sum_{n=k}^N  \mathbb{E}\left[\left(\sum_{l=n-k+1}^n G_l(u_{(l-2)\vee 0}) \Delta_l \mathbb{W}, u_n - u_{(n-k - 1) \vee 0} \right) \right] \\
&= \sum_{n=k}^N  \mathbb{E}\left[\left(\sum_{l=n-k+1}^n G_l(u_{(l-2)\vee 0}) \Delta_l \mathbb{W}, u_n - u_{n-k } \right) \right] \\
&\quad + \sum_{n=k}^N  \mathbb{E}\left[\left(\sum_{l=n-k+1}^n G_l(u_{(l-2)\vee 0}) \Delta_l \mathbb{W}, u_{n-k} - u_{(n-k -1) \vee 0} \right) \right] \\
& \leq \frac{1}{4} \sum_{n=k}^N \mathbb{E} \left[ \norm{u_n - u_{n-k}}_{L^2_x}^2 \right] + \mathbb{E} \left[ \norm{ u_{n-k} - u_{(n-k - 1) \vee 0}}_{L^2_x}^2 \right] \\
&\quad + \sum_{n=k}^N \mathbb{E} \left[ \norm{\sum_{l=n-k+1}^n G_l(u_{(l-2)\vee 0}) \Delta_l \mathbb{W}}_{L^2_x}^2 \right]
\end{align*}

Notice that, due to an index shift,
\begin{align*}
&\sum_{n=k}^N \mathbb{E} \left[ \norm{ u_{n-k} - u_{(n-k - 1) \vee 0}}_{L^2_x}^2 \right] \leq  \sum_{n=1}^N \mathbb{E} \left[ \norm{ u_{n} - u_{n - 1}}_{L^2_x}^2 \right].
\end{align*}

Recall~\eqref{eq:Waver}. Using It\^o isometry,~$a_n^2 \leq 1$ for all $n$, the locality of~$a_n$ and discrete Fubini's theorem,
\begin{align*}
&\sum_{n=k}^N \mathbb{E} \left[ \norm{\sum_{l=n-k+1}^n G_l(u_{(l-2)\vee 0}) \Delta_l \mathbb{W}}_{L^2_x}^2 \right] \\
&\quad = \sum_{n=k}^N \mathbb{E} \left[  \int  \norm{ \sum_{l=n-k+1}^n a_l(t) G_l(u_{(l-2)\vee 0})}_{L_2(\mathfrak{U}; L^2_x)}^2 \dd t \right] \\
&\quad \lesssim \sum_{n=k}^N \mathbb{E} \left[ \sum_{l=n-k+1}^n \tau \norm{ G_l(u_{(l-2)\vee 0})}_{L_2(\mathfrak{U}; L^2_x)}^2 \right] \\
&\quad \leq k \sum_{n=1}^N \tau \mathbb{E} \left[  \norm{ G_n(u_{(n-2)\vee 0})}_{L_2(\mathfrak{U}; L^2_x)}^2 \right].
\end{align*}

Divide~\eqref{eq:Temporaer001} by $k$ and take the maximum over $k \in [N]$
\begin{align*}
&\max_{k \in [N]} \sum_{n = k}^N \mathbb{E}\left[ \frac{\norm{ u_n - u_{n-k}}_{L^2_x}^2}{k} \right] \\
&\hspace{3em} \lesssim \mathbb{E}\left[\max_{n \in [N]} \norm{u_n}_{L^2_x}^2 \right] + \mathbb{E}\left[ \sum_{n=1}^N \tau \left( \norm{S(\varepsilon u_n)}_{L^{p'}_x}^{p'} +  \norm{ \varepsilon u_{n} }_{L^p_x}^p \right) \right] \\
&\hspace{3em} \quad +  \sum_{n=1}^N \mathbb{E} \left[ \norm{ u_{n} - u_{n - 1}}_{L^2_x}^2 \right] + \sum_{n=1}^N \tau \mathbb{E} \left[  \norm{ G_n(u_{(n-2)\vee 0})}_{L_2(\mathfrak{U}; L^2_x)}^2 \right].
\end{align*}
The assertion follows by using~\eqref{ass:Sublinear}, the identity $\norm{S(\varepsilon u)}_{L^{p'}_x}^{p'} = \norm{\varepsilon u}_{L^p_x}^p$ and the a priori bounds~\eqref{eq:GronwallResult} and~\eqref{eq:GradientEstimate}.
\end{proof}

\subsubsection{Pressure reconstruction}
The pressure naturally belongs to the sum of two vector spaces and thus can be written as $\pi = \pi^1 + \pi^2$. In fact,~$\pi^1$ corresponds to the deterministic pressure and~$\pi^2$ is the stochastic pressure. The decomposition allows us to measure regularity of the individual components. This is important, since~$\pi^1$ and $\pi^2$ behave substantially different. While the first one enjoys higher temporal regularity compared to the second one, it lacks spatial regularity. Conversely, the spatial regularity of~$\pi^2$ is superior than the one of~$\pi^1$, but~$\pi^2$ behaves irregular in time. 

We aim to reconstruct the pressure by the formula~\eqref{eq:pressure001}. Since we are interested in a decomposition of the pressure into its individual components, we need to make sure that we can solve the following problem:

Given $\ell \in ( V_{h,\Div}^\perp)^*$ we need to find $q \in Q_h$ such that for all $v \in V_{h,\Div}^\perp$
\begin{align} \label{eq:PressureSolve}
\left(q, \Div v \right) &= \ell(v).
\end{align}
Solvability of~\eqref{eq:PressureSolve} is equivalent by Babuska's lemma to ask for
\begin{itemize}
\item an inf-sup stability
\begin{align} \label{eq:InfSupAbst}
\inf_{q \in Q_h} \sup_{v \in V_{h,\Div}^\perp } \frac{\left(q, \Div v \right)}{\norm{q}_Q \norm{v}_V } =: \beta_h > 0
\end{align}
for appropriate norms on $Q_h$ and $V_h$, respectively.
\item a non-degeneracy condition, i.e., for all $v \in V_{h,\Div}^\perp \backslash \{ 0\}$ exists $q \in Q_h$ such that
\begin{align} \label{eq:NonDeg}
\left(q, \Div v \right) &\neq 0.
\end{align} 
\end{itemize}

Keep in mind that the non-degeneracy condition is always satisfied by the construction of $V_{h,\Div}^\perp$, cf. Lemma~\ref{lem:NonDegenerate}.

It is a design question on the approximate spaces for the velocity and the pressure and its corresponding norms whether the inf-sup constant~$\beta_h$ is uniformly non-degenerate in the discretization parameter~$h$.

Naturally, the pressure (semi-)norms are given by
\begin{subequations} \label{eq:StochPresBoth}
\begin{align} \label{eq:StochPresNorm}
\norm{q}_{Q_{\mathrm{sto}}} &:= \sup_{v \in V_{h,\Div}^\perp } \frac{\left( q,\Div v\right)}{\norm{v}_{L^2_x}}, \\ \label{eq:DetPresNorm}
\norm{q}_{Q_{\mathrm{det}}} &:= \sup_{v \in V_{h,\Div}^\perp } \frac{\left( q,\Div v\right)}{\norm{\nabla v}_{L^p_x}}.
\end{align}
\end{subequations} 
They become norms if we restrict to the equivalence classes~$Q_h^{\sim}:= Q_h \slash \sim$ of the relation
\begin{align}
q_1 \sim q_2 \in Q_h \quad \Leftrightarrow \quad \forall v \in V_{h,\Div}^\perp: (q_1 - q_2, \Div v) =0.
\end{align}
More details can be found in Appendix~\ref{sec:PressureNorms}.

\subsubsection{Pressure estimate on $L^{p'}_\omega W^{1,p'}_t L^{p'}_x + ( L^2_\omega L^\infty_t W^{1,2}_x \cap B^{1/2}_{\infty-,\infty} L^2_\omega W^{1,2}_x )$}

We are ready to state the well posedness result of the pressure reconstruction.

\begin{theorem}\label{thm:PressureTimeStepping}
Let the assumptions of Theorem~\ref{thm:EnergyBoundsVelocity} be satisfied and~$\pi^{\mathrm{init}}$ be given by~\eqref{eq:InitPressure02}. Then the following hold true:
\begin{enumerate}
\item (Existence) There exists a unique $\pi \in [Q_h^{\sim}]^{N+1}$ solving~\eqref{eq:pressure001}.
\item (Measurability) $\pi_n$ is $\mathcal{F}_{n}^N$-measurable.
\item (Decomposition) There exist $(\pi^{\mathrm{det}}, \pi^{\mathrm{sto}}) \in [Q_h^{\sim} \times Q_h^{\sim}]^N$ such that
\begin{subequations}
\begin{align}
\pi_0 &= \pi^{\mathrm{init}}, \\
\forall n \in [N]: \quad \pi_n &= \pi^{\mathrm{init}}+ \pi^{\mathrm{det}}_n + \pi^{\mathrm{sto}}_n.
\end{align}
\end{subequations}

\item (Explicit formula) For all $\xi \in V_{h,\Div}^\perp$, $n \in [N]$ and $\mathbb{P}$-a.s.
\begin{subequations}
\begin{align} \label{eq:PressureA}
\left( \pi^{\mathrm{det}}_n, \Div \xi \right) &=   \left(\sum_{l=1}^n \tau S(\varepsilon u_l), \nabla \xi \right), \\ \label{eq:PressureB}
\left(\pi^{\mathrm{sto}}_n , \Div \xi \right) &=  -\left(\sum_{l=1}^n G_l(u_{(l-2)\vee 0}) \Delta_l \mathbb{W}, \xi \right).
\end{align}
\end{subequations}
\item (Stability) It holds
\begin{subequations}
\begin{align} \label{eq:BesovFirst02}
 \mathbb{E} \left[ \sum_{n=1}^N \tau \norm{\frac{d_n \pi^{\mathrm{det}} }{\tau}}_{Q_{\mathrm{det}}}^{p'} \right] 
 + \mathbb{E}\left[ \max_{n \in [N]} \norm{\pi_n^{\mathrm{sto}}}_{Q_{\mathrm{sto}}}^2 \right] &\lesssim \mathbb{E}\left[ \norm{ u_0}_{L^2_x}^2 \right] + 1,
\end{align}
and for all $r \in [2,\infty)$ 
\begin{align} \label{eq:BesovFirst01}
 \left( \max_{k \in [N]}\sum_{n=k}^N\tau \left( \frac{ \mathbb{E}\left[ \norm{\pi_n^{\mathrm{sto}} - \pi_{n-k}^{\mathrm{sto}}}_{Q_{\mathrm{sto}}}^2 \right] }{\tau k} \right)^{r/2} \right)^{2/r}  &\lesssim \mathbb{E}\left[ \norm{ u_0}_{L^2_x}^2 \right] + 1.
\end{align}
\end{subequations}

\end{enumerate}
\end{theorem}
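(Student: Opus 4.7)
I would prove the five items roughly in the order stated. The unifying observation is that $u_n, u_{n-1} \in V_{h,\Div}$ and $\xi \in V_{h,\Div}^\perp$ are $L^2$-orthogonal, so $(d_n u, \xi) = 0$ and the reconstruction equation~\eqref{eq:pressure001} reduces to finding, at each $n$, an element $d_n \pi \in Q_h^{\sim}$ satisfying
\[
(d_n \pi, \Div \xi) = \tau(S(\varepsilon u_n), \nabla \xi) - \bigl(G_n(u_{(n-2)\vee 0}) \Delta_n \mathbb{W}, \xi\bigr) \quad \forall \xi \in V_{h,\Div}^\perp.
\]
This is a Babuska-type problem on the quotient space; existence and uniqueness follow from the inf-sup condition~\eqref{eq:InfSupAbst} together with the non-degeneracy~\eqref{eq:NonDeg} (cf.~Lemma~\ref{lem:NonDegenerate}). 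Splitting the right-hand side into deterministic and stochastic parts and applying the same Babuska argument to each produces the increments $d_n \pi^{\mathrm{det}}$ and $d_n \pi^{\mathrm{sto}}$; telescoping with $\pi_0^{\mathrm{det}} = \pi_0^{\mathrm{sto}} = 0$ yields~\eqref{eq:PressureA}-\eqref{eq:PressureB} and the decomposition $\pi_n = \pi^{\mathrm{init}} + \pi_n^{\mathrm{det}} + \pi_n^{\mathrm{sto}}$. The $\mathcal{F}_n^N$-measurability is inherited from $u_n$, $u_{n-1}$, $G_n(u_{(n-2)\vee 0})$, and $\Delta_n \mathbb{W}$.

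\textbf{Two of the three stability bounds are immediate.} For $\pi^{\mathrm{det}}$ the definition of $\norm{\cdot}_{Q_{\mathrm{det}}}$ together with~\eqref{eq:PressureA} (applied to increments) yields $\norm{d_n \pi^{\mathrm{det}}/\tau}_{Q_{\mathrm{det}}} \leq \norm{S(\varepsilon u_n)}_{L^{p'}}$; since $\norm{S(\varepsilon u_n)}_{L^{p'}}^{p'} = \norm{\varepsilon u_n}_{L^p}^p$, the first half of~\eqref{eq:BesovFirst02} follows directly from Theorem~\ref{thm:EnergyBoundsVelocity}. For the Besov bound~\eqref{eq:BesovFirst01} I would use the dual identity
\[
\norm{\pi_n^{\mathrm{sto}} - \pi_{n-k}^{\mathrm{sto}}}_{Q_{\mathrm{sto}}} \leq \Bigl\lVert \sum_{l=n-k+1}^n G_l(u_{(l-2)\vee 0}) \Delta_l \mathbb{W} \Bigr\rVert_{L^2},
\]
then apply It\^o's isometry and $a_l \leq 1$ exactly as in the velocity analysis to obtain $\mathbb{E}\bigl[\norm{\pi_n^{\mathrm{sto}} - \pi_{n-k}^{\mathrm{sto}}}_{Q_{\mathrm{sto}}}^2\bigr] \lesssim k\tau\,(1 + \mathbb{E}[\max_n \norm{u_n}_{L^2}^2])$ via~\eqref{ass:Sublinear} and Theorem~\ref{thm:EnergyBoundsVelocity}. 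Dividing by $k\tau$ yields a bound uniform in $n$ and $k$, so summing $\sum_{n = k}^N \tau \leq T$ and extracting the $r$th root gives a constant of order $T^{2/r}$, finite uniformly in $r \in [2,\infty)$.

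\textbf{The maximal estimate for $\pi^{\mathrm{sto}}$ is the main obstacle.} The dual characterization gives $\norm{\pi_n^{\mathrm{sto}}}_{Q_{\mathrm{sto}}} \leq \norm{M_n}_{L^2}$ with $M_n := \sum_{l=1}^n G_l(u_{(l-2)\vee 0}) \Delta_l \mathbb{W}$, but the two-step delay in $u_{(l-2)\vee 0}$ means $M_n$ is \emph{not} a discrete martingale with respect to $(\mathcal{F}_n^N)$. My plan is to replicate at the $L^2_x$-valued level the decomposition $M_n = \mathbb{E}[M_N \mid \mathcal{F}_n^N] - E_n$ used around~\eqref{eq:LocalDecomposition}-\eqref{eq:LocalMartingaleError} in the velocity proof, where the compensator is essentially $E_n = \int_{J_n} a_{n+1}(t) G_{n+1}(u_{(n-1)\vee 0}) \dd W(t) \cdot \mathbf{1}_{\{n+1 \leq N\}}$. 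The martingale part is then handled by Doob's inequality together with It\^o's isometry applied to $M_N$, while the compensator is bounded via $\max_n \norm{E_n}_{L^2}^2 \leq \sum_n \norm{E_n}_{L^2}^2$ and a termwise It\^o estimate. Both contributions ultimately reduce to $\sum_n \tau\, \mathbb{E}[\norm{G_n(u_{(n-2)\vee 0})}_{L_2(\mathfrak{U};L^2)}^2]$, controlled by~\eqref{ass:Sublinear} and the velocity bound of Theorem~\ref{thm:EnergyBoundsVelocity}. The subtlety is to perform this decomposition correctly in the Hilbert-space-valued setting and to confirm that the compensator indeed vanishes at the terminal index so that nothing blows up at the boundary $n = N$.
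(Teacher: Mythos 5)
Your proposal is correct and, for four of the five items, follows the paper's argument essentially verbatim: Babuska's lemma on $Q_h^{\sim}$ for existence/uniqueness of the split components, measurability inherited from $u_l$, $G_l$, $\Delta_l\mathbb{W}$, the bound $\norm{d_n\pi^{\mathrm{det}}/\tau}_{Q_{\mathrm{det}}}\leq\norm{S(\varepsilon u_n)}_{L^{p'}}$ combined with $\norm{S(\varepsilon u)}_{L^{p'}}^{p'}=\norm{\varepsilon u}_{L^p}^p$ and Theorem~\ref{thm:EnergyBoundsVelocity}, and the It\^o-isometry bound $\mathbb{E}[\norm{\pi_n^{\mathrm{sto}}-\pi_{n-k}^{\mathrm{sto}}}_{Q_{\mathrm{sto}}}^2]\lesssim k\tau$ for~\eqref{eq:BesovFirst01}. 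Your observation that $(d_nu,\xi)=0$ for $\xi\in V_{h,\Div}^\perp$ is the (implicit) reason the explicit formulas~\eqref{eq:PressureA}--\eqref{eq:PressureB} carry no velocity terms.

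The one place where you take a different route is the maximal estimate $\mathbb{E}[\max_n\norm{\pi_n^{\mathrm{sto}}}_{Q_{\mathrm{sto}}}^2]$. You propose the discrete decomposition $M_n=\mathbb{E}[M_N\mid\mathcal{F}_n^N]-E_n$ with compensator $E_n=\bigl(\int_{J_n}a_{n+1}(t)G_{n+1}(u_{(n-1)\vee0})\dd W(t)\bigr)1_{\{n+1\leq N\}}$, followed by Doob's $L^2$ maximal inequality for the Hilbert-space-valued martingale and a termwise It\^o bound for the compensator; this is precisely the mechanism the paper uses in Part~A of Theorem~\ref{thm:EnergyBoundsVelocity} (cf.~\eqref{eq:LocalDecomposition}--\eqref{eq:LocalMartingaleError}), transplanted to the vector-valued setting. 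The paper instead glues the continuous-time integrand $\overline{G}$ of~\eqref{eq:DefinitionGBar}, writes $M_n=\mathcal{I}_{\overline{G}}(t)-\overline{\mathrm{E}}(t)$ for $t\in J_n$ as in~\eqref{eq:ContinuousStochasticIntegral}, and applies the Burkholder--Davis--Gundy inequality to the continuous martingale $\mathcal{I}_{\overline{G}}$. Both reduce to $\sum_n\tau\,\mathbb{E}[\norm{G_n(u_{(n-2)\vee0})}_{L_2(\mathfrak{U};L^2)}^2]$ and hence to~\eqref{ass:Sublinear} and~\eqref{eq:AprioriFirst}. Your version is slightly more elementary (Doob in $L^2$ rather than BDG, no auxiliary continuous object); the paper's gluing pays off later, when the same $\mathcal{I}_{\overline{G}}$ is reused for the Besov/Nikolskii estimates of Lemmas~\ref{lem:VelocityReg} and~\ref{lem:PressureStrong}, where genuinely continuous-time regularity results from~\cite{MR4116708} are invoked. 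Your two flagged subtleties are indeed the right ones to check, and both resolve: Doob applies because $\norm{\mathbb{E}[M_N\mid\mathcal{F}_n^N]}_{L^2_x}$ is a real-valued submartingale, and $E_N=0$ by the indicator, so $\mathbb{E}[M_N\mid\mathcal{F}_N^N]=M_N$ and the terminal value is exactly the quantity controlled by It\^o's isometry.
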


\begin{proof}
The existence and uniqueness of $\pi^{\mathrm{det}}_n$ and $\pi^{\mathrm{sto}}_n$ that solve~\eqref{eq:PressureA} and~\eqref{eq:PressureB}, respectively, follows immediately from Babuska's lemma. The $\mathcal{F}_{n}^N$-measurability follows from~\eqref{eq:PressureA} and~\eqref{eq:PressureB} and the $\mathcal{F}_{n}^N$-measurability of $u_l$, $G_l$ and $\Delta_l \mathbb{W}$, $l \leq n$.

Let $\pi^{\mathrm{det}}_0 := 0$ and~$n \in [N]$. Subtracting~\eqref{eq:PressureA} for two consecutive time instances yields
\begin{align*}
\left(d_n \pi^{\mathrm{det}}, \Div \xi   \right) &= \left( \pi^{\mathrm{det}}_n - \pi^{\mathrm{det}}_{n-1}, \Div \xi   \right) =\left( \tau S(\varepsilon u_n), \nabla \xi   \right).
\end{align*}

By H\"older's inequality and definition of the deterministic pressure norm~\eqref{eq:DetPresNorm}
\begin{align*}
\norm{\frac{d_n \pi^{\mathrm{det}}}{\tau}}_{Q_{\mathrm{det}}} \leq \norm{S(\varepsilon u_n)}_{L^{p'}_x}.
\end{align*}

Finally, take the $p'$-th power, multiply by~$\tau$, sum up for $n\in [N]$ and take expectation
\begin{align*}
\mathbb{E} \left[ \sum_{n=1}^N \tau \norm{\frac{d_n \pi^{\mathrm{det}} }{\tau}}_{Q_{\mathrm{det}}}^{p'} \right] \leq \mathbb{E} \left[ \sum_{n=1}^N \tau \norm{S(\varepsilon u_n)}_{L^{p'}_x}^{p'} \right].
\end{align*}

Since $\norm{S(\varepsilon u)}_{L^{p'}_x}^{p'} = \norm{\varepsilon u}_{L^p_x}^p$ we can use the a priori bound~\eqref{eq:AprioriFirst} to conclude
\begin{align*}
\mathbb{E} \left[ \sum_{n=1}^N \tau \norm{S(\varepsilon u_n)}_{L^{p'}_x}^{p'} \right] \lesssim \mathbb{E}\left[ \norm{u_0}_{L^2_x}^2 \right] + 1.
\end{align*}

Similarly to~\eqref{eq:InitialPressureEstimate} and using $\norm{\Pi_{\Div}^\perp}_{ \mathcal{L}( L^2_x; L^2_x)} \leq 1$, one derives
\begin{align*}
\norm{\pi_{n}^{\mathrm{sto}}}_{Q_{\mathrm{sto}}}^2 &= \norm{\Pi_{\Div}^\perp \left( \sum_{l=1}^n G_l(u_{(l-2)\vee 0}) \Delta_l \mathbb{W} \right) }_{L^2_x}^2\\
&\leq \norm{\sum_{l=1}^n G_l(u_{(l-2)\vee 0}) \Delta_l \mathbb{W} }_{L^2_x}^2.
\end{align*}
Take the maximum over $n \in [N]$ and expectation
\begin{align*}
\mathbb{E}\left[ \max_{n\in [N]} \norm{\pi_{n}^{\mathrm{sto}}}_{Q_{\mathrm{sto}}}^2 \right] \leq \mathbb{E}\left[ \max_{n\in [N]} \norm{\sum_{l=1}^n G_l(u_{(l-2)\vee 0}) \Delta_l \mathbb{W} }_{L^2_x}^2 \right]
\end{align*}
In order to estimate the maximal process we use a similar perturbation argument as in~\eqref{eq:LocalDecomposition}. Let us introduce the operator $\overline{G}:\Omega \times I \to L_2(\mathfrak{U};L^2_x)$ 
\begin{align} \label{eq:DefinitionGBar}
\overline{G}(\omega,t) = \left\{\mathfrak{u} \mapsto \sum_{n=1}^N a_n(t) G_n\big(\omega, u_{(n-2)\vee 0}(\omega) \big)\mathfrak{u} \right\}.
\end{align}
Note that $\overline{G}$ is progressively measurable with respect to $(\mathcal{F}_t)$. We denote the stochastic integral of $\overline{G}$ by 
\begin{align} \label{eq:StochasticIntegralGlue}
\mathcal{I}_{\overline{G}}(t) := \int_0^t \overline{G}(s) \dd W(s).
\end{align}
Now we can decompose, using the locality of the weights $a_l$, cf.~\eqref{eq:Weights} and~\eqref{eq:Waver}, 
\begin{align}\label{eq:ContinuousStochasticIntegral}
\sum_{l=1}^{n} G_l(u_{(l-2)\vee 0}) \Delta_l \mathbb{W} &= \mathcal{I}_{\overline{G}}(t) - \overline{\mathrm{E}}(t),
\end{align}
where the error term is given by, for $t \in J_n$,
\begin{align} \label{eq:CompensatorIntegral}
\begin{aligned}
\overline{\mathrm{E}}(t) &= -\int_{t}^{t_{n + 1/2}} a_n(s) G_n(u_{(n-2)\vee 0}) \dd W(s) \\
&\quad + \left( \int_{ t_{n-1/2}}^{t} a_{n+1}(s)  G_{n+1}( u_{(n-1)\vee 0}) \dd W(s) \right) 1_{\{n+1 \leq N\}}.
\end{aligned}
\end{align}

Thus,
\begin{align*}
\mathbb{E}\left[ \max_{n \in [N]} \norm{\pi_{n}^{\mathrm{sto}}}_{Q_{\mathrm{sto}}}^2 \right] \lesssim \mathbb{E}\left[ \max_{n \in [N]} \norm{\mathcal{I}_{\overline{G}}(t_{n-1/2}) }_{L^2_x}^2 \right] + \mathbb{E}\left[ \max_{n \in [N]} \norm{\overline{E}_n(t_{n-1/2}) }_{L^2_x}^2 \right].
\end{align*}

Using the Burkholder-Davis-Gundy inequality, the temporal locality of~$a_n$ and $a_n^2 \leq 1$, cf.~\eqref{eq:Weights}, one concludes
\begin{align*}
 \mathbb{E}\left[ \max_{n \in [N]} \norm{\mathcal{I}_{\overline{G}}(t_{n-1/2}) }_{L^2_x}^2 \right] &\leq  \mathbb{E}\left[ \sup_{t\in (0,T-\tau/2)} \norm{\mathcal{I}_{\overline{G}}(t) }_{L^2_x}^2 \right] \\
 &\lesssim \mathbb{E}\left[ \int_0^{T-\tau/2} \norm{\overline{G}(t) }_{L_2(\mathfrak{U}; L^2_x)}^2 \dd t \right] \\
 &= \mathbb{E}\left[ \sum_{n=1}^{N} \int_{J_n} \norm{\sum_{l=1}^N a_{l}(t) G_{l}(u_{(l -2) \vee 0})}_{L_2(\mathfrak{U}; L^2_x)}^2 \dd t \right] \\
 &\lesssim \mathbb{E}\left[ \sum_{n=1}^N \tau \norm{G_{n}(u_{(n -2) \vee 0})}_{L_2(\mathfrak{U}; L^2_x)}^2 \right].
\end{align*}

Similarly,
\begin{align*}
\mathbb{E}\left[ \max_{n \in [N]} \norm{\overline{E}_n(t_{n-1/2}) }_{L^2_x}^2 \right] &\leq \mathbb{E}\left[ \sum_{n =1}^{N} \norm{\overline{E}_n(t_{n-1/2}) }_{L^2_x}^2 \right] \\
&= \mathbb{E}\left[ \sum_{n =1}^{N} \norm{\int_{J_n } a_n(s) G_n(u_{(n-2)\vee 0}) \dd W(s)}_{L^2_x}^2 \right] \\
&\leq \mathbb{E}\left[ \sum_{n=1}^{N} \tau \norm{G_{n}(u_{(n -2) \vee 0})}_{L_2(\mathfrak{U}; L^2_x)}^2 \right].
\end{align*}
The estimate~\eqref{eq:BesovFirst02} follows by using~\eqref{ass:Sublinear} and the a priori estimate~\eqref{eq:AprioriFirst}.

Finally, it remains to verify~\eqref{eq:BesovFirst01}. Analogously to~\eqref{eq:InitialPressureEstimate}
\begin{align*}
\norm{\pi_n^{\mathrm{sto}} - \pi_{n-k}^{\mathrm{sto}}}_{Q_{\mathrm{sto}}}^2 \leq \norm{\sum_{l= n-k+1}^n G_l( u_{(l-2)\vee 0}) \Delta_l \mathbb{W}}_{L^2_x}^2.
\end{align*}
Take expectation, use the It\^o isometry and the temporal locality of~$a_l$,
\begin{align*}
\mathbb{E}\left[\norm{\pi_n^{\mathrm{sto}} - \pi_{n-k}^{\mathrm{sto}}}_{Q_{\mathrm{sto}}}^2 \right] &\lesssim \sum_{l=n-k+1}^n \tau \mathbb{E}\left[ \norm{ G_l( u_{(l-2)\vee 0})}_{L_2(\mathfrak{U};L^2_x)}^2 \right]\\
&\leq \max_{l \in [N]} \mathbb{E}\left[ \norm{ G_l( u_{(l-2)\vee 0})}_{L_2(\mathfrak{U};L^2_x)}^2 \right] \tau k.
\end{align*}
Therefore, the claim follows by~\eqref{ass:Sublinear} and the a priori estimate~\eqref{eq:AprioriFirst}.
\end{proof}

\begin{remark}
The pressure components~$\pi^{\mathrm{init}}$, $\pi^{\mathrm{det}}$ and $\pi^{\mathrm{sto}}$ depend on the choice $V_{h,\Div}^\perp$. In fact, we could have chosen a different subspace $V_{h}^\circ \subset V_h \backslash V_{h,\Div}$. However, we favoured~$V_{h,\Div}^\perp$ since it simplifies some computations. 
\end{remark}

\subsection{A second scale of a priori estimates} \label{sec:SecondScale}
Strengthening the condition on the data approximation allows us to derive stronger stability results for velocity and pressure. 

\begin{assumption} \label{ass:StrongerCondition}
We assume there exists $C \geq 1$ (independent of $\tau$ and $h$) such that for all~$\mathcal{F}_{(n-2)\vee 0}^N$-measurable $v_{n} \in L^2_{\omega} L^2_x \cap L^p_{\omega} W^{1,p}_{0,x}$,

\begin{align} \label{eq:AssStrongerCondition}
\begin{aligned}
&\mathbb{E}\left[\max_{n \in [N]} \norm{G_n(v_n)}_{L_2(\mathfrak{U};L^2_x)}^2 \right] \\
&\hspace{3em} \leq C\left(\mathbb{E}\left[ \max_{n\in [N]} \norm{v_n}_{L^2_x}^2 + \sum_{n=3}^N \tau \norm{\varepsilon v_n}_{L^p_x}^p \right] + 1 \right).
\end{aligned}
\end{align}
\end{assumption}

\begin{remark}
Notice that~\eqref{ass:Sublinear} immediately implies
\begin{align*}
\max_{n \in [N]} \mathbb{E}\left[\norm{G_n(v_n)}_{L_2(\mathfrak{U};L^2_x)}^2 \right] \leq C \mathbb{E}\left[  \max_{n\in [N]} \norm{v_n}_{L^2_x}^2 \right].
\end{align*}
However, the maximum can not be taken prior to the expectation. Assumption~\ref{ass:StrongerCondition} enables a control on the maximal process of the noise coefficient in terms of the maximal process and the gradient of its input. 

We exclude the initial steps in the gradient contribution, since we do no want to prescribe gradient regularity at initial time. Asymptotically, the influence of this exclusion vanishes, i.e.,
\begin{align}
\mathbb{E}\left[ \sup_{t} \norm{G(t,v(t))}_{L_2(\mathfrak{U};L^2_x)}^2 \right] \lesssim \mathbb{E}\left[ \sup_t \norm{v(t)}_{L^2_x}^2 + \int \norm{\varepsilon v}_{L^p_x}^p \dd t  \right] + 1,
\end{align}
for all $(\mathcal{F}_t)$-adapted processes~$v \in L^2_\omega C_t L^2_x \cap L^p_\omega L^p_t W^{1,p}_{0,x}$.
\end{remark}

\subsubsection{Velocity estimate on $L^2_\omega B^{1/2}_{2,\infty} L^2_x$}
The next lemma establishes an estimate for the expectation of time differences.

\begin{lemma} \label{lem:VelocityReg}
Let the assumptions of Theorem~\ref{thm:EnergyBoundsVelocity} be satisfied. Additionally, let Assumption~\ref{ass:StrongerCondition} be true. Then it holds
\begin{align}
\mathbb{E} \left[ \max_{k \in [N]} \sum_{n = k}^N \frac{\norm{ u_n - u_{n-k}}_{L^2_x}^2}{k} \right] \lesssim  \mathbb{E}\left[ \norm{u_0}_{L^2_x}^2 \right] + 1.
\end{align}
\end{lemma}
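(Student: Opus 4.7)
The starting point is the pointwise identity underlying \eqref{eq:Temporaer001}, obtained by testing \eqref{eq:Global} with $\xi = u_n$ and summing over $n \in \{k,\ldots,N\}$ \emph{before} taking expectation:
\begin{align*}
\sum_{n=N-k+1}^N \norm{u_n}_{L^2}^2  + \sum_{n=k}^N \norm{u_n - u_{n-k}}_{L^2}^2 &=\sum_{n=0}^{k-1} \norm{u_n}_{L^2}^2 \\
&\quad - 2\mathcal D_k + 2 \mathcal S_k,
\end{align*}
where $\mathcal D_k$ is the deterministic stress sum and $\mathcal S_k$ the stochastic sum appearing in \eqref{eq:Temporaer001}. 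I would divide by $k$, take $\max_{k\in[N]}$, and then take expectation. The initial-slot contribution is controlled trivially by $\max_n \norm{u_n}_{L^2}^2$ (an average of at most $k$ terms), whose expectation is bounded through \eqref{eq:AprioriFirst}.

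For the deterministic piece, I would follow the Young-plus-Fubini-swap computation of \eqref{eq:Sestimate}: each index $l$ appears in at most $k$ summands, so after division by $k$ the bound collapses to $\sum_{n=1}^N\tau\bigl(\norm{S(\varepsilon u_n)}_{L^{p'}}^{p'} + \norm{\varepsilon u_n}_{L^p}^p\bigr)$, which is \emph{uniform in $k$}. Taking $\max_k$ changes nothing, and \eqref{eq:AprioriFirst} together with $\norm{S(\varepsilon u)}_{L^{p'}}^{p'} = \norm{\varepsilon u}_{L^p}^p$ gives the desired expectation bound.

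The genuine work is the stochastic term $\tfrac{1}{k}\mathcal S_k$, for which I would retain the decomposition $u_n = u_{(n-k-1)\vee 0} + (u_n - u_{(n-k-1)\vee 0})$ used in Part~B. The piece paired with $u_n - u_{(n-k-1)\vee 0}$ absorbs a small multiple of $\tfrac{1}{k}\sum_n \norm{u_n - u_{n-k}}_{L^2}^2$ on the left and leaves a residual of type $\tfrac{1}{k}\sum_{n=k}^N \bignorm{\sum_{l=n-k+1}^n G_l(u_{(l-2)\vee 0})\Delta_l \mathbb W}_{L^2}^2$, together with an $L^2$-increment sum bounded by~\eqref{eq:AprioriFirst}. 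The crucial new ingredient is the treatment of the term $\tfrac{1}{k}\sum_{n=k}^N\bigl(M_n - M_{n-k},\,u_{(n-k-1)\vee 0}\bigr)$, where $M_n = \sum_{l\leq n} G_l(u_{(l-2)\vee 0})\Delta_l \mathbb W$. In Part~B this was annihilated by expectation through the $\mathcal F^N_{(n-k-1)\vee 0}$-measurability of $u_{(n-k-1)\vee 0}$; here I cannot use that.

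To handle this, I would rewrite the sum as a single adapted continuous stochastic integral using the glueing $\mathcal I_{\overline G}$ from~\eqref{eq:StochasticIntegralGlue}--\eqref{eq:ContinuousStochasticIntegral}. Fubini on the indices $(n,l)$ gives $\sum_{n=k}^N (M_n - M_{n-k},u_{(n-k-1)\vee 0}) = \int_0^T (\phi_k(t),\dd W(t))$ (modulo compensator errors controlled as in \eqref{eq:CompensatorIntegral}), where $\phi_k(t) = \sum_l a_l(t)G_l^* V_l^{k}$ with $V_l^{k}$ a window-sum of $u_{(m)\vee 0}$-terms; the support/measurability structure of $a_l$ ensures $\phi_k$ is genuinely $(\mathcal F_t)$-adapted. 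Applying Burkholder--Davis--Gundy, bounding $\int_0^T \norm{\phi_k(t)}^2\dt \lesssim k^2 \max_m\norm{u_m}_{L^2}^2 \sum_l\tau\norm{G_l}_{L_2}^2$, and dividing by $k$ yields a bound \emph{independent of $k$} in terms of $\max_m\norm{u_m}_{L^2}^2$ and $\sum_l\tau\norm{G_l}_{L_2}^2$. Taking expectations and invoking Assumption~\ref{ass:StrongerCondition} (which finally enters to control $\mathbb E[\max_l\norm{G_l}_{L_2}^2]$ by the right-hand side of \eqref{eq:AprioriFirst}) together with Young's inequality closes the estimate. The main obstacle is this reformulation step: verifying adaptedness of the collapsed integrand $\phi_k$ and extracting the correct $k$-dependence of its quadratic variation so that the factor $1/k$ from the Nikolskii averaging exactly cancels the $k^2$ coming from $\norm{V_l^{k}}_{L^2}^2$.
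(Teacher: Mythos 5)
Your reduction of the deterministic part and of the initial slots is fine, but the stochastic analysis has two genuine gaps, and both stem from the same difficulty: the maximum over $k$ sits \emph{inside} the expectation, so none of the fixed-$k$ tools (It\^o isometry, Burkholder--Davis--Gundy applied to the $k$-th integrand) transfer. First, the residual $\max_{k}\frac{1}{k}\sum_{n}\norm{\sum_{l=n-k+1}^{n}G_l(u_{(l-2)\vee 0})\Delta_l\mathbb{W}}_{L^2}^2$ is exactly where all the work of the lemma lies, and you leave it unaddressed: for fixed $k$ it is controlled by the It\^o isometry and~\eqref{ass:Sublinear}, but with $\max_k$ inside $\mathbb{E}$ this fails, and that is precisely why Assumption~\ref{ass:StrongerCondition} is needed at all. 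The paper spends essentially the whole proof on this term (its $\mathrm{R}_2$): it glues the discrete increments into the continuous integral $\mathcal{I}_{\overline{G}}$ via~\eqref{eq:ContinuousStochasticIntegral}, invokes the supremum-in-$h$ Besov estimate $\mathbb{E}\bigl[\sup_h h^{-1}\int\norm{\tau_h\mathcal{I}_{\overline{G}}}_{L^2}^2\dt\bigr]\lesssim\mathbb{E}\bigl[\norm{\overline{G}}_{L^\infty_t L_2(\mathfrak{U};L^2)}^2\bigr]$ of~\cite[Theorem~3.2(ii)]{MR4116708}, bounds the compensator $\overline{\mathrm{E}}$ separately by the It\^o isometry, and only then uses~\eqref{eq:AssStrongerCondition} to convert $\mathbb{E}[\max_n\norm{G_n(u_{(n-2)\vee 0})}^2_{L_2(\mathfrak{U};L^2)}]$ into the a priori quantities. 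Without some such uniform-in-$h$ estimate your argument does not produce the claimed bound.

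Second, your ``crucial new ingredient'' --- the term $\frac1k\sum_n\bigl(M_n-M_{n-k},u_{(n-k-1)\vee 0}\bigr)=\frac1k\int(\phi_k,\dd W)$ --- suffers from the same problem: BDG controls $\mathbb{E}\bigl[\sup_t\abs{\int_0^t(\phi_k,\dd W)}\bigr]$ for each fixed $k$, and your quadratic-variation bound $\int\norm{\phi_k}^2\dt\lesssim k^2\max_m\norm{u_m}_{L^2}^2\sum_l\tau\norm{G_l}^2$ is indeed the right $k$-scaling, but the lemma requires $\mathbb{E}\bigl[\max_{k\in[N]}\frac1k\abs{\int(\phi_k,\dd W)}\bigr]$, a maximum over $N$ distinct martingales; a crude union bound costs a factor growing with $N$, and nothing in the proposal replaces it. The paper sidesteps this entire term by testing~\eqref{eq:Global} with $\xi=u_n-u_{n-k}$ rather than $\xi=u_n$: then H\"older, Young and Jensen give the pathwise bound $\norm{u_n-u_{n-k}}_{L^2}^2\le 2\sum_l\tau\norm{S(\varepsilon u_l)}_{L^{p'}}\norm{\varepsilon(u_n-u_{n-k})}_{L^p}+\norm{\sum_l G_l\Delta_l\mathbb{W}}_{L^2}^2$ with no pairing against a delayed iterate, hence no martingale argument at all, and only the residual of the first gap remains. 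I recommend adopting that test function and then concentrating your effort on the residual via the continuous-integral embedding.
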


\begin{proof}
First of all we observe that
\begin{align*}
&\mathbb{E} \left[ \max_{k \in [N]} \sum_{n = k}^N \frac{\norm{ u_n - u_{n-k}}_{L^2_x}^2}{k} \right] \\
&\hspace{3em} \leq \mathbb{E} \left[ \max_{k \in [N]} \norm{ u_k - u_{0}}_{L^2_x}^2 \right] +\mathbb{E}\left[  \max_{k \in [N]}\sum_{n = k+1}^N \frac{\norm{ u_n - u_{n-k}}_{L^2_x}^2}{k} \right].
\end{align*}
The first term is already controlled by~\eqref{eq:AprioriFirst}. Thus, it remains to establish an estimate for the second term.

We start by choosing $\xi = u_n - u_{n-k}$ in~\eqref{eq:Global} for $n > k$. Here we specifically exclude the case $n=k$, since otherwise gradient regularity of the initial condition needs to be required.

Now, H\"older's inequality shows
\begin{align*}
\norm{u_n - u_{n-k}}_{L^2_x}^2 &\leq \norm{\sum_{l=n-k+1}^n \tau S(\varepsilon u_l)}_{L^{p'}_x}\norm{ \varepsilon (u_n - u_{n-k})}_{L^p_x} \\
&\quad + \norm{\sum_{l=n-k+1}^n G_l(u_{(l-2)\vee 0}) \Delta_l \mathbb{W}}_{L^2_x} \norm{u_n - u_{n-k}}_{L^2_x}.
\end{align*}
Young's and Jensen's inequalities allow us to verify
\begin{align*}
\norm{u_n - u_{n-k}}_{L^2_x}^2 &\leq 2\sum_{l=n-k+1}^n \tau \norm{S(\varepsilon u_l)}_{L^{p'}_x}\norm{ \varepsilon (u_n - u_{n-k})}_{L^p_x} \\
&\quad + \norm{\sum_{l=n-k+1}^n G_l(u_{(l-2)\vee 0}) \Delta_l \mathbb{W}}_{L^2_x}^2.
\end{align*}

Divide by $k$, sum up for $n\in \{k+1,\ldots,N\}$, take maximum over $k \in [N]$ and expectation
\begin{align*}
&\mathbb{E}\left[ \max_{k\in[N]}  \sum_{n=k+1}^N  \frac{\norm{u_n - u_{n-k}}_{L^2_x}^2 }{k} \right] \\
&\quad \lesssim \mathbb{E}\left[ \max_{k\in[N]}  \sum_{n=k+1}^N  \frac{\sum_{l=n-k+1}^n \tau \norm{S(\varepsilon u_l)}_{L^{p'}_x}\norm{ \varepsilon (u_n - u_{n-k})}_{L^p_x} }{ k}  \right]\\
&\quad \quad + \mathbb{E}\left[ \max_{k\in[N]} \sum_{n=k+1}^N  \frac{\norm{\sum_{l=n-k+1}^n G_l(u_{(l-2)\vee 0}) \Delta_l \mathbb{W}}_{L^2_x}^2 }{k}  \right]\\
&\quad =: \mathrm{R}_1 + \mathrm{R}_2.
\end{align*} 
The term~$\mathrm{R}_1$ can be treated as in~\eqref{eq:Sestimate}.

The stochastic term needs a more sophisticated analysis. Recall the decomposition~\eqref{eq:ContinuousStochasticIntegral}.
This allows us to rewrite, for arbitrary $t \in J_n$ and $s \in J_{n-k}$,
\begin{align} \label{eq:DecompFlex}
\begin{aligned}
\sum_{l=n-k+1}^n G_l(u_{(l-2)\vee 0}) \Delta_l \mathbb{W} &= \sum_{l=1}^n G_l(u_{(l-2)\vee 0}) \Delta_l \mathbb{W} - \sum_{l=1}^{n-k} G_l(u_{(l-2)\vee 0}) \Delta_l \mathbb{W} \\
&= \mathcal{I}_{\overline{G}}(t)  - \overline{\mathrm{E}}(t) - \left( \mathcal{I}_{\overline{G}}(s) - \overline{\mathrm{E}}(s) \right).
\end{aligned}
\end{align}
In particular, if $t \in J_n$ then $s = t - k \tau \in J_{n-k}$. Therefore,
\begin{align*}
\mathrm{R}_2 &= \mathbb{E}\left[ \max_{k\in[N]} \sum_{n=k+1}^N \dashint_{J_n}  \frac{\norm{\mathcal{I}_{\overline{G}}(t) - \mathcal{I}_{\overline{G}}(t - k\tau) - \overline{\mathrm{E}}(t) + \overline{\mathrm{E}}(t - k\tau)}_{L^2_x}^2 }{k} \dd t \right] \\
&\lesssim \mathbb{E}\left[ \max_{k\in[N]} \sum_{n=k+1}^N \dashint_{J_n}  \frac{\norm{\mathcal{I}_{\overline{G}}(t) - \mathcal{I}_{\overline{G}}(t - k\tau)}_{L^2_x}^2 }{k} \dd t\right] + \mathbb{E}\left[  \sum_{n=1}^N \dashint_{J_n} \norm{\overline{\mathrm{E}}(t)}_{L^2_x}^2 \dd t \right]\\
&=: \mathrm{R}_2^{a} +  \mathrm{R}_2^{b}.
\end{align*}

Due to~\cite[Theorem~3.2(ii)]{MR4116708} together with an extrapolation argument, cf.~\cite[Remark~3.4]{MR4116708}
\begin{align*}
\mathrm{R}_2^a \leq \mathbb{E}\left[ \sup_{h}  \int_{h}^T \frac{\norm{\mathcal{I}_{\overline{G}}(t ) - \mathcal{I}_{\overline{G}}(t - h)}_{L^2_x}^2 }{h}  \dd t \right] \lesssim \mathbb{E}\left[ \norm{\overline{G}}_{L^\infty_t L_2(\mathfrak{U};L^2_x)}^2 \right].
\end{align*}
Additionally, the continuous operator $\overline{G}$ can be bounded by the discrete building blocks. Indeed, using the definition~\eqref{eq:DefinitionGBar} and locality of $a_l$, 
\begin{align*}
\norm{\overline{G}(t)}_{L_2(\mathfrak{U};L^2_x)}^2 &=\sum_{n=1}^N \sum_{m=1}^N  a_n(t) a_m(t) \left(G_n(u_{(n-2)\vee 0}), G_m(u_{(m-2)\vee 0}) \right)_{L_2(\mathfrak{U};L^2_x)} \\
&\lesssim \sum_{n=1}^N a_n^2(t)\norm{G_n(u_{(n-2)\vee 0})}_{L_2(\mathfrak{U};L^2_x)}^2 \\
&\lesssim \max_{n \in [N]} \norm{G_n(u_{(n-2)\vee 0})}_{L_2(\mathfrak{U};L^2_x)}^2 \sum_{n=1}^N a_n^2(t).
\end{align*}
Since $\sup_{t \in I}\sum_{n=1}^N a_n^2(t) \leq 1$ we find
\begin{align}\label{eq:BoundContinuousDiscrete}
\mathbb{E}\left[ \norm{\overline{G}}_{L^\infty_t L_2(\mathfrak{U};L^2_x)}^2 \right] \lesssim \mathbb{E}\left[ \max_{n \in [N]} \norm{G_n(u_{(n-2)\vee 0})}_{L_2(\mathfrak{U};L^2_x)}^2 \right].
\end{align}

The It\^o isometry implies, for $t \in J_n$,
\begin{align*}
&\mathbb{E}\left[ \norm{\overline{\mathrm{E}}(t)}_{L^2_x}^2  \right] = \int_{t}^{t_{n + 1/2}} a_n^2(s) \mathbb{E}\left[ \norm{G_n(u_{(n-2)\vee 0})}_{L_2(\mathfrak{U};L^2_x)}^2 \right] \dd s \\
&\quad + \left( \int_{ t_{n-1/2}}^{t} a_{n+1}^2(s)  \mathbb{E}\left[ \norm{G_{n+1}( u_{(n-1)\vee 0})}_{L_2(\mathfrak{U};L^2_x)}^2 \right] \dd s \right) 1_{\{n+1 \leq N\}}.
\end{align*}
Thus,
\begin{align*}
\sup_{t \in J_n} \mathbb{E}\left[ \norm{\overline{\mathrm{E}}(t)}_{L^2_x}^2  \right] \leq 2 \max_{n \in [N]} \mathbb{E}\left[ \norm{G_n(u_{(n-2)\vee 0})}_{L_2(\mathfrak{U};L^2_x)}^2 \right] \tau.
\end{align*}
This allows us to bound the perturbation
\begin{align*}
\mathrm{R}_2^b &\leq  \sum_{n=1}^N \sup_{t \in J_n}\mathbb{E}\left[  \norm{\overline{\mathrm{E}}(t)}_{L^2_x}^2  \right] \\
&\leq 2 T \max_{n \in [N]} \mathbb{E}\left[ \norm{G_n(u_{(n-2)\vee 0})}_{L_2(\mathfrak{U};L^2_x)}^2 \right].
\end{align*}

Collecting the estimates for~$\mathrm{R}_2^a$ and~$\mathrm{R}_2^b$, we have verified
\begin{align*}
\mathrm{R}_2 \lesssim \mathbb{E}\left[ \max_{n \in [N]} \norm{G_n(u_{(n-2)\vee 0})}_{L_2(\mathfrak{U};L^2_x)}^2 \right].
\end{align*}

Lastly, we use~\eqref{eq:AssStrongerCondition} with $v_n = u_{(n-2)\vee 0}$ to obtain
\begin{align} \label{eq:GtoU}
\begin{aligned}
&\mathbb{E}\left[ \max_{n \in [N]} \norm{G_n(u_{(n-2)\vee 0})}_{L_2(\mathfrak{U};L^2_x)}^2 \right] \\
&\hspace{3em} \lesssim \norm{u_0}_{L^2_\omega L^2_x}^2 +\left(\mathbb{E}\left[ \max_{n\in [N]} \norm{u_n}_{L^2_x}^2 + \sum_{n=1}^N \tau \norm{\varepsilon u_n}_{L^p_x}^p \right] + 1 \right).
\end{aligned}
\end{align}
The assertion follows by using~\eqref{eq:AprioriFirst}.
\end{proof}

\subsubsection{Pressure estimate on $L^2_\omega B^{1/2}_{\infty-,\infty} W^{1,2}_x$} \label{sec:PressureStrong}
Also the stochastic pressure enjoys regularity for time differences.

\begin{lemma} \label{lem:PressureStrong} Let the assumptions of Theorem~\ref{thm:EnergyBoundsVelocity} be satisfied and~$\pi^{\mathrm{sto}}$ be given by Theorem~\ref{thm:PressureTimeStepping}. Additionally, let Assumption~\ref{ass:StrongerCondition} be true. Then for all $r\in [2,\infty)$ there exists $C_r \geq 1$ such that it holds
\begin{align} \label{eq:EstimateStochasticPressure}
\mathbb{E}\left[ \left( \max_{k \in [N]} \sum_{n=k}^N \tau \norm{\frac{\pi^{\mathrm{sto}}_{n} - \pi^{\mathrm{sto}}_{n-k}}{\sqrt{\tau k}}}_{Q_{\mathrm{sto}}}^r  \right)^{2/r} \right]  \lesssim C_r \left( \mathbb{E}\left[ \norm{ u_0}_{L^2_x}^2 \right] + 1 \right).
\end{align}
\end{lemma}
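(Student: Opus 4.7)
The plan is to mimic the strategy used for the velocity in Lemma~\ref{lem:VelocityReg}, but now measuring time differences of the stochastic pressure on an $L^r_t$ scale before taking expectation. The starting point is the explicit formula~\eqref{eq:PressureB}, which together with the definition of $\norm{\cdot}_{Q_{\mathrm{sto}}}$ in~\eqref{eq:StochPresNorm} and $\norm{\Pi_{\Div}^\perp}_{\mathcal{L}(L^2;L^2)} \leq 1$ yields
\begin{align*}
\norm{\pi^{\mathrm{sto}}_n - \pi^{\mathrm{sto}}_{n-k}}_{Q_{\mathrm{sto}}} \leq \Bigl\lVert \sum_{l=n-k+1}^n G_l(u_{(l-2)\vee 0}) \Delta_l \mathbb{W} \Bigr\rVert_{L^2}.
\end{align*}
Just as in~\eqref{eq:DecompFlex}, I rewrite the right-hand side through the continuous stochastic integral $\mathcal{I}_{\overline{G}}$ from~\eqref{eq:StochasticIntegralGlue}: for any $t \in J_n$ (so that $t - k\tau \in J_{n-k}$) we have
\begin{align*}
\sum_{l=n-k+1}^n G_l(u_{(l-2)\vee 0}) \Delta_l \mathbb{W} = \bigl( \mathcal{I}_{\overline{G}}(t) - \mathcal{I}_{\overline{G}}(t-k\tau) \bigr) - \bigl( \overline{\mathrm{E}}(t) - \overline{\mathrm{E}}(t-k\tau) \bigr).
\end{align*}
Averaging over $t \in J_n$ before applying the triangle inequality then splits the quantity of interest into a continuous part $\mathrm{S}_1$ involving increments of $\mathcal{I}_{\overline{G}}$ and a boundary correction $\mathrm{S}_2$ involving $\overline{\mathrm{E}}$.

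For $\mathrm{S}_1$, I bound the discrete double sum by the continuous Besov--Nikolskii seminorm: since
\begin{align*}
\sum_{n=k}^N \tau \Bigl( \dashint_{J_n} \norm{\mathcal{I}_{\overline{G}}(t) - \mathcal{I}_{\overline{G}}(t-k\tau)}_{L^2}^r \dt \Bigr)^{1/1} \leq \int_{k\tau}^T \norm{\mathcal{I}_{\overline{G}}(t) - \mathcal{I}_{\overline{G}}(t-k\tau)}_{L^2}^r \dt,
\end{align*}
the discrete $h = k\tau$-sup-then-sum structure is dominated by $\sup_{h > 0} h^{-r/2} \int_h^T \norm{\tau_h \mathcal{I}_{\overline{G}}}_{L^2}^r \dt = \seminorm{\mathcal{I}_{\overline{G}}}_{B^{1/2}_{r,\infty}(I;L^2)}^r$. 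The $L^{2/r}_\omega$ norm of this quantity is controlled by the stochastic Besov regularity of~$\mathcal{I}_{\overline{G}}$ combined with the extrapolation argument from~\cite[Theorem~3.2, Remark~3.4]{MR4116708}, giving the bound $\lesssim C_r \mathbb{E}\bigl[ \norm{\overline{G}}_{L^\infty_t L_2(\mathfrak{U};L^2)}^2 \bigr]$ (up to an $r$-dependent constant coming from the Gaussian moment equivalence used in the extrapolation, which is the source of $C_r$).

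For $\mathrm{S}_2$, the compensator $\overline{\mathrm{E}}(t)$ defined in~\eqref{eq:CompensatorIntegral} is a stochastic integral over an interval of length $\leq \tau$. Using the It\^o isometry exactly as at the end of the proof of Theorem~\ref{thm:PressureTimeStepping}, one has $\sup_{t \in J_n} \mathbb{E}[\norm{\overline{\mathrm{E}}(t)}_{L^2}^2] \lesssim \tau \max_{l\in[N]} \mathbb{E}[\norm{G_l(u_{(l-2)\vee 0})}_{L_2(\mathfrak{U};L^2)}^2]$. The factor $\sqrt{\tau}$ then cancels against $\sqrt{\tau k}^{-1}$ (up to $k^{-1/2} \leq 1$), and the sum $\sum_{n=k}^N \tau$ gives a bounded factor $T$, so~$\mathrm{S}_2$ is of lower order.

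Combining both contributions, the bound~\eqref{eq:BoundContinuousDiscrete} already established in the proof of Lemma~\ref{lem:VelocityReg} reduces the estimate to $\mathbb{E}[\max_{n\in[N]} \norm{G_n(u_{(n-2)\vee 0})}_{L_2(\mathfrak{U};L^2)}^2]$; Assumption~\ref{ass:StrongerCondition} together with the a priori bound~\eqref{eq:AprioriFirst} from Theorem~\ref{thm:EnergyBoundsVelocity} finishes the argument. I expect the main obstacle to be carefully importing the continuous extrapolation result of~\cite{MR4116708} in a way that produces the correct $C_r$ dependence and handles the fact that the discrete sup-sum structure is an upper Riemann-sum approximation of the continuous Besov seminorm only for mesh-aligned shifts $h = k\tau$; averaging over $t \in J_n$ is precisely what allows this passage.
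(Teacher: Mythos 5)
Your overall architecture matches the paper's: bound $\norm{\pi^{\mathrm{sto}}_n-\pi^{\mathrm{sto}}_{n-k}}_{Q_{\mathrm{sto}}}$ by the $L^2$-norm of the stochastic increments via~\eqref{eq:PressureB}, decompose through $\mathcal{I}_{\overline{G}}$ and the compensator $\overline{\mathrm{E}}$ as in~\eqref{eq:ContinuousStochasticIntegral}, control the $\mathcal{I}_{\overline{G}}$-part by the continuous $B^{1/2}_{r,\infty}$ seminorm and \cite[Theorem~3.2, Remark~3.4]{MR4116708}, and then reduce everything to $\mathbb{E}[\max_n\norm{G_n(u_{(n-2)\vee 0})}^2_{L_2(\mathfrak{U};L^2)}]$ via Assumption~\ref{ass:StrongerCondition}. (The paper additionally isolates the $k=n$ term, where the increment degenerates to $\pi^{\mathrm{sto}}_k$ itself, and handles it with the embedding $B^{1/2}_{r,\infty}\hookrightarrow B^{1/2-1/r}_{\infty,\infty}$; you should make this boundary case explicit, since $t-k\tau$ then lands in $J_0$ where the representation needs separate care.)

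The genuine gap is in your treatment of $\mathrm{S}_2$. The quantity to control is
\begin{align*}
\mathbb{E}\left[ \left( \sum_{n=1}^N \int_{J_n} \norm{\frac{\overline{\mathrm{E}}(t)}{\sqrt{\tau}}}_{L^2}^r \dd t \right)^{2/r} \right],
\end{align*}
with the $r$-th power \emph{inside} the sum and the concave power $2/r<1$ \emph{outside}, under the expectation. A bound on $\sup_{t}\mathbb{E}[\norm{\overline{\mathrm{E}}(t)}_{L^2}^2]$ from the It\^o isometry says nothing about this for $r>2$: pulling the expectation inside by Jensen leaves you with $\big(\sum_n\int_{J_n}\mathbb{E}[\norm{\overline{\mathrm{E}}(t)/\sqrt{\tau}}_{L^2}^r]\dd t\big)^{2/r}$, which by Burkholder--Davis--Gundy is controlled by $\big(\sum_n \tau\,\mathbb{E}[\norm{G_n(\cdot)}^r_{L_2(\mathfrak{U};L^2)}]\big)^{2/r}\gtrsim$ an $L^r_\omega$-norm squared of $\max_n\norm{G_n(\cdot)}_{L_2(\mathfrak{U};L^2)}$, and for $r>2$ this \emph{dominates} the $L^2_\omega$-quantity that Assumption~\ref{ass:StrongerCondition} actually provides — so the inequality runs the wrong way and the argument does not close. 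This is exactly why the paper proves~\eqref{eq:EstPertur} via the discrete extrapolation machinery of Appendix~\ref{app:Appendix}: one checks that the partial-sum process $X_M=\sum_{n\le M}\int_{J_n}\norm{\overline{\mathrm{E}}(t)/\sqrt{\tau}}^r_{L^2}\dd t$ is dominated (in the stopping-time sense of~\eqref{eq:DominationCondition}, Lemma~\ref{lem:DominationCheck}) by $Y_M=\max_{n\le (M+2)\wedge N}\norm{G_n(u_{(n-2)\vee 0})}^r_{L_2(\mathfrak{U};L^2)}$, and then Corollary~\ref{cor:Domination} with exponent $2/r\in(0,1)$ transfers the \emph{lower} moment of the maximal process of $X$ to that of $Y$, yielding precisely $\mathbb{E}[\max_n\norm{G_n(\cdot)}^2]$. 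Your proposal replaces this key step with a second-moment computation that is only valid at $r=2$, so as written the proof of~\eqref{eq:EstimateStochasticPressure} for $r>2$ is incomplete.
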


\begin{proof}
We will only discuss the case~$r > 2$. The case~$r=2$ follows along the lines of Lemma~\ref{lem:VelocityReg}. Let $\pi_0^{\mathrm{sto}} := 0$. We split
\begin{align*}
&\mathbb{E}\left[ \left( \max_{k \in [N]} \sum_{n=k}^N \tau \norm{\frac{\pi^{\mathrm{sto}}_{n} - \pi^{\mathrm{sto}}_{n-k}}{\sqrt{\tau k}}}_{Q_{\mathrm{sto}}}^r  \right)^{2/r} \right] \leq \mathbb{E}\left[ \left( \max_{k \in [N]}\tau \norm{\frac{\pi^{\mathrm{sto}}_{k}}{\sqrt{\tau k}}}_{Q_{\mathrm{sto}}}^r  \right)^{2/r} \right]  \\
&\hspace{3em}  + \mathbb{E}\left[ \left( \max_{k \in [N]} \sum_{n=k+1}^N \tau \norm{\frac{\pi^{\mathrm{sto}}_{n} - \pi^{\mathrm{sto}}_{n-k}}{\sqrt{\tau k}}}_{Q_{\mathrm{sto}}}^r  \right)^{2/r} \right]\\
&\hspace{2em} =: \mathrm{J}_1 + \mathrm{J}_2.
\end{align*}

Due to the explicit expression~\eqref{eq:PressureB} and the representation~\eqref{eq:ContinuousStochasticIntegral},
\begin{align*}
\mathrm{J}_1 &\leq \mathbb{E}\left[ \left( \max_{k \in [N]}\tau \norm{\frac{  \sum_{l=1}^k G_l(u_{(l-2)\vee 0}) \Delta_l \mathbb{W}   }{\sqrt{\tau k}}}_{L^2_x}^r  \right)^{2/r} \right] \\
&=  \mathbb{E}\left[ \left( \max_{k \in [N]} \int_{J_k} \norm{\frac{  \mathcal{I}_{\overline{G}}(t) - \overline{E}(t)  }{\sqrt{\tau k}}}_{L^2_x}^r \dd t \right)^{2/r} \right] \\
&\lesssim \mathbb{E}\left[ \left( \max_{k \in [N]} \int_{J_k} \norm{\frac{  \mathcal{I}_{\overline{G}}(t)}{\sqrt{\tau k}}}_{L^2_x}^r \dd t  \right)^{2/r} \right] \\
&\hspace{2em} + \mathbb{E}\left[ \left( \max_{k \in [N]} \int_{J_k} \norm{\frac{ \overline{E}(t)  }{\sqrt{\tau k}}}_{L^2_x}^r \dd t \right)^{2/r} \right]\\
&=: \mathrm{J}_1^a + \mathrm{J}_1^b.
\end{align*}
Similarly, we can decompose
\begin{align*}
\mathrm{J}_2&\lesssim \mathbb{E}\left[  \max_{k \in [N]} \left( \sum_{n=k+1}^N \int_{J_n} \norm{\frac{\mathcal{I}_{\overline{G}}(t) - \mathcal{I}_{\overline{G}}(t - k\tau)}{\sqrt{k\tau }}}_{L^2_x}^r \dd t \right)^{2/r}   \right] \\
&\quad + \mathbb{E}\left[ \max_{k \in [N]} \left( \sum_{n=k+1}^N \int_{J_n} \norm{\frac{\overline{E}(t) - \overline{E}(t-k\tau)}{\sqrt{\tau k}}}_{L^2_x}^r  \dd t\right)^{2/r} \right]\\
&=: \mathrm{J}_2^a +\mathrm{J}_2^b.
\end{align*}

First we discuss the terms~$\mathrm{J}^a$. They correspond to the continuous operator~$\mathcal{I}_{\overline{G}}$ measured on time-discretized norms. We bound the discrete norms by continuous ones and use stability results for stochastic integration as presented in~\cite[Theorem~3.2]{MR4116708}.

Notice that
\begin{align*}
\mathrm{J}_1^a &= \mathbb{E}\left[ \left( \max_{k \in [N]} \frac{1}{k} \dashint_{J_k} \abs{\frac{t}{k\tau}}^{(r-2)/2}   \frac{\norm{\mathcal{I}_{\overline{G}}(t)}_{L^2_x}^r}{\abs{t}^{(r-2)/2}} \dd t  \right)^{2/r} \right] \\
&\leq \mathbb{E}\left[ \left( \sup_{t \in I} \frac{\norm{\mathcal{I}_{\overline{G}}(t)}_{L^2_x}}{\abs{t}^{1/2 - 1/r}} \right)^2 \right] \\
&\leq \norm{\mathcal{I}_{\overline{G}}}_{L^2_\omega B^{\alpha_r}_{\infty,\infty} L^2_x}^2,
\end{align*}
where $\alpha_r = 1/2 - 1/r$. Using the embedding~$B^{1/2}_{r,\infty} \hookrightarrow B^{\alpha_r}_{\infty,\infty}$ (see e.g.~\cite[Theorem~10]{MR1108473}), we conclude
\begin{align} \label{eq:InitNew}
 \mathrm{J}_1^a \lesssim \norm{\mathcal{I}_{\overline{G}}}_{L^2_\omega B^{1/2}_{r,\infty} L^2_x}^2.
\end{align}
Additionally, it holds
\begin{align*}
\mathrm{J}_2^a \leq \mathbb{E}\left[   \left(\sup_{h} h^{-r/2} \int_{h}^T \norm{\mathcal{I}_{\overline{G}}(t) - \mathcal{I}_{\overline{G}}(t - h)}_{L^2_x}^r \dd t \right)^{2/r}   \right] \leq \norm{\mathcal{I}_{\overline{G}}}_{L^2_\omega B^{1/2}_{r,\infty} L^2_x}^2.
\end{align*}
Therefore, using~\cite[Theorem~3.2(ii)]{MR4116708} together with an extrapolation argument, cf.~\cite[Remark~3.4]{MR4116708}, we get
\begin{align} \label{eq:EstimateJ}
\mathrm{J}_1^a + \mathrm{J}_2^a \lesssim \norm{\overline{G}}_{L^2_\omega L^\infty_t L_2(\mathfrak{U};L^2_x)}^2.
\end{align}

Next we turn our attention to~$\mathrm{J}^b$. These terms are generated by the correlation of the increments~$\Delta_n \mathbb{W}$. Fortunately the correlation length is bounded. In our setting only adjecent increments are correlated.

Estimating the maximum by a sum and dropping the additional factor~$1/k$ establish 
\begin{align*}
\mathrm{J}_1^b \leq \mathbb{E}\left[ \left( \sum_{n=1}^N \int_{J_n} \norm{\frac{\overline{E}(t)}{\sqrt{\tau}}}_{L^2_x}^r \dd t \right)^{2/r} \right].
\end{align*}
Similarly,
\begin{align*}
\mathrm{J}_2^b &\lesssim \mathbb{E}\left[ \left( \sum_{n=1}^N \int_{J_n} \norm{\frac{\overline{E}(t)}{\sqrt{\tau}}}_{L^2_x}^r \dd t \right)^{2/r} \right].
\end{align*}

The assertion~\eqref{eq:EstimateStochasticPressure} follows from~\eqref{eq:EstimateJ} analogously to~\eqref{eq:GtoU}  provided we can verify
\begin{align} \label{eq:EstPertur}
\mathbb{E}\left[ \left( \sum_{n=1}^N \int_{J_n} \norm{\frac{\overline{E}(t)}{\sqrt{\tau}}}_{L^2_x}^r \dd t \right)^{2/r} \right] \lesssim \mathbb{E}\left[ \max_{n \in [N]} \norm{G_n(u_{(n-2)\vee 0})}_{L_2(\mathfrak{U};L^2_x)}^2 \right].
\end{align}

We will establish~\eqref{eq:EstPertur} with the help of the discrete extrapolation as presented in Appendix~\ref{app:Appendix}. Let us define the $(\mathcal{F}_n^N)$-adapted processes
\begin{align*}
X_M &:= \left( \sum_{n=1}^M \int_{J_n} \norm{\frac{\overline{E}(t)}{\sqrt{\tau}}}_{L^2_x}^r \dd t \right) 1_{\{M \leq N\}}, \\
Y_M &:= \max_{n \leq (M+2) \wedge N} \norm{G_n(u_{(n-2)\vee 0})}_{L_2(\mathfrak{U};L^2_x)}^r.
\end{align*}
By Lemma~\ref{lem:DominationCheck} the process $(X_n)$ is dominated by~$(Y_n)$. Therefore, Corollary~\ref{cor:Domination} with $k = 2/r \in (0,1)$ establishes
\begin{align*}
\mathbb{E}\left[ \left(\sup_{M \in \mathbb{N}} X_M \right)^{2/r} \right] \lesssim \mathbb{E}\left[ \left(\sup_{M \in \mathbb{N}} Y_M \right)^{2/r} \right].
\end{align*}
Notice that
\begin{align*}
\sup_{M \in \mathbb{N}} X_M &= \sum_{n=1}^N \int_{J_n} \norm{\frac{\overline{E}(t)}{\sqrt{\tau}}}_{L^2_x}^r \dd t,\\
\sup_{M \in \mathbb{N}} Y_M &= \max_{n \leq N} \norm{G_n(u_{(n-2)\vee 0})}_{L_2(\mathfrak{U};L^2_x)}^r.
\end{align*}
Therefore, we have verified~\eqref{eq:EstPertur} and the proof is finished.
\end{proof}

\section{Velocity error for exactly divergence free approximations} \label{sec:ErrorDecomp}
In this section we decompose the velocity error for abstract exactly divergence-free approximations on the natural scale of weak solutions 
\begin{align*}
L^2_\omega L^\infty_t L^2_x \cap L^p_\omega L^p_t W^{1,p}_{0,x} \cap B^{1/2}_{2,\infty} L^2_\omega L^2_x
\end{align*}
into individual error contributions.

%

From now on we assume that~$V_{h,\Div} \subset L^2_x \cap W^{1,p}_{0,\Div}$. In other words, the spatial discretization is conforming with respect to regularity and the divergence free constraint. As a consequence the equation for the velocity error does not see any pressure contribution.

Let $e_n := \mean{u}_n - u_n$. The error equation reads, for all $n \in [N]$, $\xi \in V_{h,\Div}$ and~$\mathbb{P}$-a.s.
\begin{align} \label{eq:ErrorEquation}
\begin{aligned}
&\int_{\mathcal{O}} d_n e \cdot \xi \dd x + \int_{\mathcal{O}}  \int a_n(s) \left( S\big(\varepsilon u(s) \big) - S(\varepsilon u_n) \right) \dd s : \varepsilon \xi \dd x \\
&\hspace{3em} = \int_{\mathcal{O}}  \int a_n(s) \left( G\big(s,u(s) \big)  - G_n(u_{(n-2)\vee 0}) \right) \dd W(s) \cdot  \xi \dd x.
\end{aligned}
\end{align}

\subsection{Decomposition on $L^2_\omega L^\infty_t L^2_x \cap L^{p}_\omega L^p_t W^{1,p}_{0,x}$}
The first decomposition is closely related to Part~A in the proof of Theorem~\ref{thm:EnergyBoundsVelocity}.

\begin{theorem} \label{thm:ErrorEstimates}
Let $u$ be a weak solution to~\eqref{eq:intro_pStokes} and $u_n$ be generated by~\eqref{eq:velocity001}. Then it holds
\begin{align}\label{eq:ErrorEstimate}
\begin{aligned}
& \mathbb{E}\left[ \max_{n} \norm{\mean{u}_n - u_n}_{L^2_x}^2 + \sum_{n} \int a_n(t) \norm{V\big( \varepsilon u(t) \big) - V(\varepsilon u_n)}_{L^2_x}^2 \dd t \right] \\
&\hspace{3em} \lesssim C_{\mathrm{init}} + C_{L^\infty} +  C_{\mathrm{best}} + C_{G},
\end{aligned}
\end{align}
where
\begin{itemize}
\item initial error
\begin{align*}
C_{\mathrm{init}} = \mathbb{E}\left[ \norm{\Pi_{\Div} u_0 - \Pi_{\Div} u_0^h}_{L^2_x}^2 \right],
\end{align*}
\item projection error in~$L^2_\omega L^\infty_t L^2_x$
\begin{align*}
C_{L^\infty} = \mathbb{E}\left[ \max_{n} \norm{\mean{u}_n - \Pi_{\Div} \mean{u}_n}_{L^2_x}^2 \right],
\end{align*}
\item best approximation in $V_{h,\Div}$
\begin{align*}
C_{\mathrm{best}} = \inf_{\eta \in [V_{h,\Div}]^N} \mathbb{E}\left[\sum_{n=1}^N \norm{\Pi_{\Div} \mean{u}_n - \eta_n}_{L^2_x}^2 + \int a_n(t)  \norm{V\big( \varepsilon u(t) \big) - V(\varepsilon \eta_n)}_{L^2_x}^2 \dd t\right],
\end{align*} 
\item data approximation
\begin{align*}
C_{G}  = \mathbb{E}\left[\sum_{n=1}^N \int a_n^2(t) \norm{G\big(t, u(t) \big)- G_n(u_{(n-2)\vee 0})}_{L_2(\mathfrak{U};L^2_x)}^2 \dd t \right].
\end{align*}
\end{itemize}

\end{theorem}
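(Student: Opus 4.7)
The plan is to mirror Part~A of the proof of Theorem~\ref{thm:EnergyBoundsVelocity}, adapted to the error equation~\eqref{eq:ErrorEquation}. First I introduce the auxiliary discrete error $\tilde{e}_n := \Pi_{\Div} \mean{u}_n - u_n \in V_{h,\Div}$, which is a legitimate test function because $\mean{u}_n$ is divergence-free and $u_n \in V_{h,\Div}$ by construction. Testing with $\xi = \tilde{e}_n$ and using that $\mean{u}_n - \Pi_{\Div} \mean{u}_n$ is $L^2$-orthogonal to $V_{h,\Div}$ reduces $(d_n e, \tilde{e}_n)$ to $(d_n \tilde{e}, \tilde{e}_n)$, which polarises via $(a-b)\cdot a = \tfrac12(\abs{a}^2 - \abs{b}^2 + \abs{a-b}^2)$ into a telescoping energy identity for $\tilde{e}$. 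Summing for $n \le M \le N$, taking maximum and expectation, and reconstructing $e_n = \tilde{e}_n + (\mean{u}_n - \Pi_{\Div} \mean{u}_n)$ by the triangle inequality produces $C_{L^\infty}$, while the initial contribution $\norm{\tilde{e}_0}_{L^2}^2$ becomes exactly $C_{\mathrm{init}}$.

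For the viscous contribution I would insert an arbitrary comparison sequence $\eta \in [V_{h,\Div}]^N$ and decompose
\begin{align*}
\varepsilon \tilde{e}_n = \varepsilon(\Pi_{\Div}\mean{u}_n - \eta_n) + \varepsilon(\eta_n - u(s)) + \varepsilon(u(s) - u_n).
\end{align*}
The contribution along the last summand is bounded from below by the monotonicity of $S$, yielding the term $c \int a_n(s)\norm{V(\varepsilon u(s)) - V(\varepsilon u_n)}_{L^2}^2 \ds$ on the left-hand side. The other two summands are handled via the shifted Young inequality relating $S$- and $V$-distance recalled in the appendix: the $V(\varepsilon u(s)) - V(\varepsilon u_n)$ byproduct is absorbed with a small factor $\delta$, and the remainder becomes exactly the two summands defining $C_{\mathrm{best}}$ after taking the infimum over $\eta$.

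The stochastic contribution is the delicate step. Following the decorrelation trick from Theorem~\ref{thm:EnergyBoundsVelocity}, I split
\begin{align*}
\Bigl(\int a_n(s) \bigl(G(s,u(s)) - G_n(u_{(n-2)\vee 0})\bigr) \dd W(s), \tilde{e}_n \Bigr)
= (\cdots, \tilde{e}_{(n-2)\vee 0}) + (\cdots, \tilde{e}_n - \tilde{e}_{(n-2)\vee 0}).
\end{align*}
The second pairing is absorbed by Young's inequality against the $\norm{\tilde{e}_n - \tilde{e}_{n-1}}^2$ terms generated by the polarisation identity, at the cost of the Itô variance $\int a_n^2(s)\norm{G(s,u(s)) - G_n(u_{(n-2)\vee 0})}_{L_2(\mathfrak{U};L^2)}^2 \ds$, which is precisely $C_G$. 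For the first pairing, $\tilde{e}_{(n-2)\vee 0}$ is $\mathcal{F}^N_{(n-2)\vee 0}$-measurable, so the sum is nearly a centered martingale; I would run the conditional expectation/compensator decomposition of~\eqref{eq:LocalDecomposition}--\eqref{eq:ArtiEstim} together with Burkholder-Davis-Gundy and It\^o's isometry to control its maximal process by $C_G$ plus a Lipschitz tail $\sum_n \tau \mathbb{E}[\max_{m \le n-2}\norm{\tilde{e}_m}_{L^2}^2]$ that is closed with discrete Gronwall.

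The main obstacle is precisely this stochastic step: the continuous integrand $G(s,u(s))$ does not share the step-adaptedness of $G_n(u_{(n-2)\vee 0})$, so the decorrelation has to be performed on the full difference while separating what becomes $C_G$ (captured exactly by the It\^o isometry of the difference against $a_n^2$) from the Lipschitz cross terms in $\tilde{e}$ that must feed into Gronwall via~\eqref{ass:Lipschitz}. Once the bookkeeping of $\delta$-absorptions between $\norm{\tilde{e}_n - \tilde{e}_{n-1}}^2$, $\norm{\tilde{e}_n - \tilde{e}_{(n-2)\vee 0}}^2$ and the monotone $V$-distance is settled, choosing $\delta$ small enough and invoking discrete Gronwall as in the passage from~\eqref{eq:Estimate0011} to~\eqref{eq:GronwallResult} closes~\eqref{eq:ErrorEstimate}.
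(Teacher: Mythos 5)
Your overall architecture is the right one and largely matches the paper's: polarisation of the time-derivative term, monotonicity plus the generalized Young inequality for the viscous term, and the decorrelation/compensator/Burkholder--Davis--Gundy treatment of the stochastic pairing. There is, however, a genuine gap in the viscous term that comes from your choice of test function. You test with $\tilde e_n=\Pi_{\Div}\mean{u}_n-u_n$ and then insert $\eta_n$ \emph{inside the elliptic pairing}, producing the summand
\begin{align*}
\int_{\mathcal O}\int a_n(s)\bigl(S(\varepsilon u(s))-S(\varepsilon u_n)\bigr)\ds:\varepsilon(\Pi_{\Div}\mean{u}_n-\eta_n)\dx .
\end{align*}
The generalized Young inequality of Lemma~\ref{lem:GeneralizedYoung} converts this into a $V$-distance involving the \emph{symmetric gradient} of $\Pi_{\Div}\mean{u}_n-\eta_n$; already for $p=2$ this is $\norm{\varepsilon(\Pi_{\Div}\mean{u}_n-\eta_n)}_{L^2}^2$, which cannot be controlled by the plain $L^2$-distance $\norm{\Pi_{\Div}\mean{u}_n-\eta_n}_{L^2}^2$ appearing in $C_{\mathrm{best}}$ without an inverse estimate costing a factor $h^{-2}$. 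So as written your bookkeeping does not close with the stated $C_{\mathrm{best}}$. The paper avoids this by testing with $\xi=\eta_n-u_n$ instead: then the elliptic term only sees $\varepsilon(\eta_n-u(s))+\varepsilon(u(s)-u_n)$, which produce exactly the $V$-part of $C_{\mathrm{best}}$ and the coercive term, while the difference $\eta_n-\Pi_{\Div}\mean{u}_n$ enters only through the time-derivative and stochastic pairings, where it is paired in $L^2$ and Young's inequality yields the $\norm{\Pi_{\Div}\mean{u}_n-\eta_n}_{L^2}^2$ part of $C_{\mathrm{best}}$.

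A smaller point: the Lipschitz tail and discrete Gronwall step you announce for the martingale part are not needed for this theorem. Since $C_G$ retains the full difference $G(s,u(s))-G_n(u_{(n-2)\vee 0})$, the compensator/BDG estimate only produces $\delta\,\mathbb E\bigl[\max_M\norm{\Pi_{\Div}e_M}_{L^2}^2\bigr]$ plus $c_\delta C_G$, and the $\delta$-term is absorbed directly by the left-hand side; the Lipschitz splitting of $G$ (and the resulting Gronwall argument) belongs to the later estimate of $C_G$ itself in Section~\ref{sec:Convergence}, not to this decomposition.
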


\begin{proof}
The overall aim is to test the error equation~\eqref{eq:ErrorEquation} with the error itself and to use the $V$-coercivity of~$S$. However, since $\mean{u}_n$ is not a discrete function this can not be done. Instead we test~\eqref{eq:ErrorEquation} with~$\xi = \eta_n - u_n$ where $\eta_n \in V_{h,\Div}$ is a free variable. Afterwards, we artificially introduce~$\mean{u}_n$ in the test function. We discuss each term appearing in~\eqref{eq:ErrorEquation} separately.

Due to the symmetry of~$\Pi_{\Div}$, H\"older's and Young's inequalities
\begin{align} \label{eq:ErrorLinfL2}
\begin{aligned}
&\int_{\mathcal{O}} d_n e \cdot (\eta_n - u_n) \dd x \\
&\hspace{3em} = \int_{\mathcal{O}} d_n \Pi_{\Div} e \cdot (\Pi_{\Div}e_n + \eta_n - \Pi_{\Div}\mean{u}_n) \dd x \\
&\hspace{3em}  = \frac{1}{2}\left( \norm{\Pi_{\Div}e_n}_{L^2_x}^2 -\norm{\Pi_{\Div}e_{n-1}}_{L^2_x}^2 + \norm{\Pi_{\Div}(e_n - e_{n-1})}_{L^2_x}^2 \right) \\
&\hspace{6em}  + \int_{\mathcal{O}} d_n \Pi_{\Div}e \cdot (\eta_n - \Pi_{\Div}\mean{u}_n) \dd x \\
&\hspace{3em} \geq \frac{1}{2}\left( \norm{\Pi_{\Div}e_n}_{L^2_x}^2 -\norm{\Pi_{\Div}e_{n-1}}_{L^2_x}^2 \right) + \frac{1}{4} \norm{\Pi_{\Div}(e_n - e_{n-1})}_{L^2_x}^2 \\
&\hspace{6em} - \norm{\eta_n - \Pi_{\Div}\mean{u}_n}_{L^2_x}^2.
\end{aligned}
\end{align}

Due to Lemma~\ref{lem:Relation} and Lemma~\ref{lem:GeneralizedYoung}
\begin{align*}
&\int_{\mathcal{O}}  \int a_n(s) \left( S\big(\varepsilon u(s) \big) - S(\varepsilon u_n) \right) \dd s : \varepsilon (\eta_n - u_n) \dd x \\
&\quad \geq c \int a_n(s) \norm{V\big(\varepsilon u(s) \big) - V(\varepsilon u_n)}_{L^2_x}^2 \dd s\\
&\quad \quad + \int a_n(s)  \int_{\mathcal{O}}  \left( S\big(\varepsilon u(s) \big) - S(\varepsilon u_n) \right)  : \varepsilon (\eta_n - u(s)) \dd x\dd s \\
&\quad \geq c(1-\delta) \int a_n(s) \norm{V\big(\varepsilon u(s) \big) - V(\varepsilon u_n)}_{L^2_x}^2 \dd s \\
&\quad \quad - c_\delta  \int a_n(s) \norm{V\big(\varepsilon u(s) \big) - V(\varepsilon \eta_n)}_{L^2_x}^2 \dd s.
\end{align*}

We delay the test function such that it is is uncorrelated to the $L^2_x$-valued random variable
\begin{align*}
 \int a_n(s) \left( G\big(s,u(s) \big)  - G_n(u_{(n-2)\vee 0}) \right) \dd W(s).
\end{align*}
Therefore, we split 
\begin{align*}
&\int_{\mathcal{O}}  \int a_n(s) \left( G\big(s,u(s) \big)  - G_n(u_{(n-2)\vee 0}) \right) \dd W(s) \cdot  (\eta_n - u_n) \dd x \\
&\quad = \int_{\mathcal{O}}  \int a_n(s) \left( G\big(s,u(s) \big)  - G_n(u_{(n-2)\vee 0}) \right) \dd W(s) \cdot  \Pi_{\Div}(e_n  - e_{(n-2) \vee 0} ) \dd x \\
&\quad \quad + \int_{\mathcal{O}}  \int a_n(s) \left( G\big(s,u(s) \big)  - G_n(u_{(n-2)\vee 0}) \right) \dd W(s) \cdot \Pi_{\Div} e_{(n-2) \vee 0} \dd x \\
&\quad\quad + \int_{\mathcal{O}}  \int a_n(s) \left( G\big(s,u(s) \big)  - G_n(u_{(n-2)\vee 0}) \right) \dd W(s) \cdot  ( \eta_n - \Pi_{\Div}\mean{u}_n) \dd x \\
&\quad =: \mathfrak{J}_1(n) + \mathfrak{J}_2(n) + \mathfrak{J}_3(n).
\end{align*}

Summing up, and taking maximum and expectation
\begin{align}\label{eq:ErrorEstimateAlmost}
\begin{aligned}
&\mathbb{E} \left[ \max_{n \leq N} \norm{\Pi_{\Div}e_n}_{L^2_x}^2  + (1-\delta) \sum_{n=1}^N \int a_n(s) \norm{V\big(\varepsilon u(s) \big) - V(\varepsilon u_n)}_{L^2_x}^2 \dd s \right] \\
&\hspace{2em} + \mathbb{E} \left[ \sum_{n=1}^N \norm{\Pi_{\Div}(e_n - e_{n-1})}_{L^2_x}^2  \right] \\
&\hspace{1em} \lesssim \mathbb{E} \left[  \norm{\Pi_{\Div}e_0}_{L^2_x}^2\right] +\mathbb{E} \left[ \max_{M \leq N} \sum_{n=1}^M\mathfrak{J}_1(n) + \mathfrak{J}_2(n) + \mathfrak{J}_3(n)\right]\\
&\hspace{2em} +\mathbb{E} \left[  \sum_{n=1}^N \norm{\eta_n - \Pi_{\Div}\mean{u}_n}_{L^2_x}^2  +  c_\delta \int a_n(s) \norm{V\big(\varepsilon u(s) \big) - V(\varepsilon \eta_n)}_{L^2_x}^2 \dd s\right].
\end{aligned}
\end{align}

It remains to estimate the contributions due to the stochastic integral. The first and third term are estimated by H\"older's and Young's inequalities
\begin{align*}
&\mathbb{E} \left[ \max_{M \leq N} \sum_{n=1}^M\mathfrak{J}_1(n) + \mathfrak{J}_3(n)\right] \\
&\hspace{3em} \leq c_\delta  \mathbb{E} \left[ \sum_{n=1}^N \norm{ \int a_n(s) \left( G\big(s,u(s) \big)  - G_n(u_{(n-2)\vee 0}) \right) \dd W(s)}_{L^2_x}^2 \right] \\
&\hspace{4em} +  \delta \sum_{n=1}^N \mathbb{E}\left[ \norm{\Pi_{\Div}(e_n  - e_{(n-2) \vee 0} ) }_{L^2_x}^2 \right] + \sum_{n=1}^N \mathbb{E}\left[ \norm{\Pi_{\Div} \mean{u}_n - \eta_n }_{L^2_x}^2 \right].
\end{align*}

The It\^o isometry shows
\begin{align*}
&\mathbb{E} \left[ \sum_{n=1}^N \norm{ \int a_n(s) \left( G\big(s,u(s) \big)  - G_n(u_{(n-2)\vee 0}) \right) \dd W(s)}_{L^2_x}^2 \right] \\
&\quad = \mathbb{E} \left[ \sum_{n=1}^N  \int a_n^2(s) \norm{G\big(s,u(s) \big)  - G_n(u_{(n-2)\vee 0}) }_{L_2(\mathfrak{U};L^2_x)}^2 \dd s \right].
\end{align*}

For~$\mathfrak{J}_2$ we proceed as in the derivation of the a priori estimates for the approximation~$u_n$, cf.~\eqref{eq:DiscreteStochasticIntegral}. We introduce the random variable
\begin{align*}
\mathfrak{Y}_M := \sum_{n=1}^M \mathfrak{J}_2(n).
\end{align*}
It decomposes into a martingale with respect to~$(\mathcal{F}_n^N)$ and a compensator
\begin{align} \label{eq:DecompErrorMart}
\mathbb{E}\left[\mathfrak{Y}_N \big| \mathcal{F}_{M}^N \right] - \mathfrak{Y}_{M} = \mathfrak{E}_M,
\end{align}
where
\begin{align*}
\mathfrak{E}_M =  &\left( \int_{\mathcal{O}}  \int_{J_M} a_{M+1}(s) \left( G\big(s,u(s) \big)  - G_{M+1}(u_{(M-1)\vee 0}) \right) \dd W(s) \cdot \Pi_{\Div} e_{(M-1) \vee 0} \dd x \right) \\
&\hspace{3em} 1_{\{M+1 \leq N\}}.
\end{align*}
Now we can estimate the maximal process in terms of the maximal process of its summands
\begin{align*}
\mathbb{E}\left[\max_{M \leq N} \mathfrak{Y}_M \right] \leq \mathbb{E}\left[\max_{M \leq N} \mathbb{E}\left[\mathfrak{Y}_N \big| \mathcal{F}_{M}^N \right] \right] + \mathbb{E}\left[\max_{M \leq N} \mathfrak{E}_M \right].
\end{align*}

An application of the Burkholder-Davis-Gundy inequality and~\eqref{eq:DecompErrorMart} verify
\begin{align*}
\mathbb{E}\left[\max_{M \leq N} \mathbb{E}\left[\mathfrak{Y}_N \big| \mathcal{F}_{M}^N \right] \right] &\lesssim \mathbb{E}\left[ \left( \sum_{n=1}^N \left( \mathbb{E}\left[\mathfrak{Y}_N \big| \mathcal{F}_{n}^N \right] - \mathbb{E}\left[\mathfrak{Y}_N \big| \mathcal{F}_{n-1}^N \right] \right)^2 \right)^{1/2} \right] \\
&\lesssim \mathbb{E}\left[ \left( \sum_{n=1}^N \left(\mathfrak{Y}_n - \mathfrak{Y}_{n-1} \right)^2 \right)^{1/2} \right] + \mathbb{E}\left[ \left( \sum_{n=1}^N \mathfrak{E}_n^2 \right)^{1/2} \right].
\end{align*}
Due to H\"older's and Young's inequalities and the It\^o isometry
\begin{align*}
&\mathbb{E}\left[\max_{M \leq N} \mathfrak{E}_M \right] \leq \mathbb{E}\left[ \left( \sum_{n=1}^N \mathfrak{E}_n^2 \right)^{1/2} \right] \\
&\hspace{2em} \lesssim \delta\mathbb{E}\left[ \max_{M \leq N} \norm{\Pi_{\Div} e_{(M-2) \vee 0}}_{L^2_x}^2 \right] \\
&\hspace{4em}  + c_\delta \mathbb{E}\left[ \sum_{n=1}^N \int a_n^2(s)\norm{  G\big(s,u(s) \big)  - G_n(u_{(n-2)\vee 0})}_{L_2(\mathfrak{U};L^2_x)}^2 \dd s \right].
\end{align*}
Similarly one derives
\begin{align*}
&\mathbb{E}\left[ \left( \sum_{n=1}^N \left(\mathfrak{Y}_n - \mathfrak{Y}_{n-1} \right)^2 \right)^{1/2} \right] \lesssim \delta\mathbb{E}\left[ \max_{M \leq N} \norm{\Pi_{\Div} e_{(M-2) \vee 0}}_{L^2_x}^2 \right] \\
&\hspace{4em} + c_\delta \mathbb{E}\left[ \sum_{n=1}^N \int a_n^2(s)\norm{  G\big(s,u(s) \big)  - G_n(u_{(n-2)\vee 0})}_{L_2(\mathfrak{U};L^2_x)}^2 \dd s \right].
\end{align*}
Overall, we have proved
\begin{align} \label{eq:EstimateStochasticIntegralMart}
\begin{aligned}
&\mathbb{E} \left[ \max_{M \leq N} \sum_{n=1}^M\mathfrak{J}_1(n) + \mathfrak{J}_2(n) + \mathfrak{J}_3(n)\right] \\
&\hspace{2em} \lesssim c_\delta \mathbb{E}\left[ \sum_{n=1}^N \int a_n^2(s)\norm{  G\big(s,u(s) \big)  - G_n(u_{(n-2)\vee 0})}_{L_2(\mathfrak{U};L^2_x)}^2 \dd s \right] \\
&\hspace{3em} +  \delta \sum_{n=1}^N \mathbb{E}\left[ \norm{\Pi_{\Div}(e_n  - e_{(n-2) \vee 0} ) }_{L^2_x}^2 \right] + \delta\mathbb{E}\left[ \max_{M \leq N} \norm{\Pi_{\Div} e_{(M-2) \vee 0}}_{L^2_x}^2 \right]\\
&\hspace{3em} + \sum_{n=1}^N \mathbb{E}\left[ \norm{\Pi_{\Div} \mean{u}_n - \eta_n }_{L^2_x}^2 \right].
\end{aligned}
\end{align}
Using~\eqref{eq:EstimateStochasticIntegralMart} in~\eqref{eq:ErrorEstimateAlmost} and choose~$\delta> 0$ sufficiently small
\begin{align} \label{eq:EstimateFinal01}
\begin{aligned}
&\mathbb{E} \left[ \max_{n \leq N} \norm{\Pi_{\Div}e_n}_{L^2_x}^2  + \sum_{n=1}^N \int a_n(s) \norm{V\big(\varepsilon u(s) \big) - V(\varepsilon u_n)}_{L^2_x}^2 \dd s \right] \\
&\hspace{2em} + \mathbb{E} \left[ \sum_{n=1}^N \norm{\Pi_{\Div}(e_n - e_{n-1})}_{L^2_x}^2  \right] \\
&\hspace{1em} \lesssim \mathbb{E} \left[  \norm{\Pi_{\Div}e_0}_{L^2_x}^2\right] + \mathbb{E}\left[ \sum_{n=1}^N \int a_n^2(s)\norm{  G\big(s,u(s) \big)  - G_n(u_{(n-2)\vee 0})}_{L_2(\mathfrak{U};L^2_x)}^2 \dd s \right]\\
&\hspace{2em} +\mathbb{E} \left[  \sum_{n=1}^N \norm{\eta_n - \Pi_{\Div}\mean{u}_n}_{L^2_x}^2  +  \int a_n(s) \norm{V\big(\varepsilon u(s) \big) - V(\varepsilon \eta_n)}_{L^2_x}^2 \dd s\right].
\end{aligned}
\end{align}
This concludes the error estimate for the projected error. In order to get an estimate for the full error we artificially introduce the projected error and trivially estimate
\begin{align*}
\mathbb{E} \left[ \max_{n \leq N} \norm{e_n}_{L^2_x}^2  \right] \lesssim \mathbb{E} \left[ \max_{n \leq N} \norm{\Pi_{\Div}\mean{u}_n - \mean{u}_n}_{L^2_x}^2 \right] + \mathbb{E} \left[ \max_{n \leq N} \norm{\Pi_{\Div}e_n}_{L^2_x}^2 \right].
\end{align*}
Lastly, since~$\eta \in [V_{h,\Div}]^N$ is a free parameter we can take the infimum over~$\eta \in [V_{h,\Div}]^N$ and the assertion follows.
\end{proof}

\subsection{Decomposition on $B^{1/2}_{2,\infty} L^2_\omega L^2_x$}
The decomposition of time differences is closely related to Part~B in the proof of Theorem~\ref{thm:EnergyBoundsVelocity}. 

\begin{theorem}\label{lem:ErrorEstimateBesov}
In the framework of Theorem~\ref{thm:ErrorEstimates} it holds
\begin{align}\label{eq:EstimateB12}
\begin{aligned}
&\max_{k \in [N]} \frac{1}{k} \sum_{n=k}^N \mathbb{E}\left[ \norm{(\mean{u}_n - u_n) - (\mean{u}_{n-k} - u_{n-k}) }_{L^2_x}^2 \right] \\
&\hspace{4em} \lesssim C_{\mathrm{init}} + C_{B^{1/2}_{2,\infty}}+ C_{\mathrm{best}} + C_{G}  + C_{V},
\end{aligned}
\end{align}
where 
\begin{itemize}
\item temporal oscillation of non-linear gradient
\begin{align*}
C_V &= \frac{1}{\tau} \sum_{n=1}^N \int \int a_n(s)  a_{n}(t) \mathbb{E}\left[    \norm{ V\big(\varepsilon u(s) \big)  -  V\big(\varepsilon u(t)  \big)}_{L^2_x}^2  \right]\dd t \dd s,
\end{align*}
\item projection error in~$B^{1/2}_{2,\infty} L^2_\omega L^2_x$
\begin{align*}
C_{B^{1/2}_{2,\infty}} &= \max_{k \in [N]} \frac{1}{k} \sum_{n=k}^N\mathbb{E}\left[ \norm{\Pi_{\Div}(\mean{u}_n - \mean{u}_{n-k} ) - (\mean{u}_n - \mean{u}_{n-k} ) }_{L^2_x}^2 \right].
\end{align*}
\end{itemize}
\end{theorem}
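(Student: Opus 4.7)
My plan mirrors Part~B of the proof of Theorem~\ref{thm:EnergyBoundsVelocity} and leverages the tools developed in Theorem~\ref{thm:ErrorEstimates}. First, since $u_n, u_{n-k} \in V_{h,\Div}$, I would split
\begin{align*}
\norm{e_n - e_{n-k}}_{L^2}^2 \leq 2\norm{\Pi_{\Div}(e_n - e_{n-k})}_{L^2}^2 + 2\norm{(I - \Pi_{\Div})(\mean{u}_n - \mean{u}_{n-k})}_{L^2}^2.
\end{align*}
After dividing by $k$, summing over $n\in\{k,\ldots,N\}$, and taking expectation and maximum in $k$, the second term is exactly $C_{B^{1/2}_{2,\infty}}$; it remains to bound the corresponding average of $\norm{\Pi_{\Div}(e_n - e_{n-k})}_{L^2}^2$.

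For this I would sum the error equation~\eqref{eq:ErrorEquation} over $l\in\{n-k+1,\ldots,n\}$ and test with $\xi = \Pi_{\Div}(e_n - e_{n-k}) \in V_{h,\Div}$, so that the left-hand side becomes $\norm{\Pi_{\Div}(e_n - e_{n-k})}_{L^2}^2$. The deterministic $S$-term is treated in the spirit of Theorem~\ref{thm:ErrorEstimates}: I would introduce a free $\eta_l \in V_{h,\Div}$ (later minimized to produce $C_{\mathrm{best}}$), decompose $S(\varepsilon u(s)) - S(\varepsilon u_l)$ through the intermediate quantities $S(\varepsilon \eta_l)$ and $S(\varepsilon u(t))$ for $t \in J_{l-1}\cup J_l$, and apply the monotonicity identity of Lemma~\ref{lem:Relation} together with the generalized Young inequality of Lemma~\ref{lem:GeneralizedYoung}. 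Upon summing and dividing by $k$ (and using discrete Fubini as in~\eqref{eq:Sestimate}), this produces contributions of the form $\int a_l(s)\norm{V(\varepsilon u(s)) - V(\varepsilon u_l)}_{L^2}^2\,\ds$ (controlled by Theorem~\ref{thm:ErrorEstimates}), $\int a_l(s)\norm{V(\varepsilon u(s)) - V(\varepsilon \eta_l)}_{L^2}^2\,\ds$ (minimized into $C_{\mathrm{best}}$), and $\tau^{-1}\int\int a_l(s)a_l(t)\norm{V(\varepsilon u(s)) - V(\varepsilon u(t))}_{L^2}^2\,\dt\,\ds$ summed over $l$, which is precisely $C_V$.

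For the stochastic contribution I would follow the artificial-delay trick of Part~B of Theorem~\ref{thm:EnergyBoundsVelocity} combined with the martingale decomposition~\eqref{eq:LocalDecomposition}--\eqref{eq:LocalMartingaleError}: since $\sum_{l=n-k+1}^n \int a_l(s)(G - G_l)\,\dd W$ is uncorrelated with $\mathcal{F}^N_{(n-k-1)\vee 0}$, I would write $\Pi_{\Div}(e_n - e_{n-k}) = \Pi_{\Div}(e_n - e_{(n-k-1)\vee 0}) - \Pi_{\Div}(e_{n-k} - e_{(n-k-1)\vee 0})$ and split the stochastic pairing accordingly. The part paired with the delayed error $\Pi_{\Div} e_{(n-k-1)\vee 0}$ forms an almost-martingale with a compensator of type~\eqref{eq:LocalMartingaleError}, controlled via Burkholder-Davis-Gundy and It\^o's isometry; the remaining increments are absorbed through Young's inequality and the sublinear growth of Assumption~\ref{ass:DataApprox}. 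This yields $C_G$ plus terms absorbable on the left. Choosing $\delta>0$ small, dividing by $k$, summing, taking max over $k$ and expectation, and invoking Theorem~\ref{thm:ErrorEstimates} for the residual $\mathbb{E}\max_n\norm{\Pi_{\Div}e_n}_{L^2}^2$ and $\mathbb{E}\sum_n\int a_n\norm{V(\varepsilon u) - V(\varepsilon u_n)}_{L^2}^2\,\ds$ yields~\eqref{eq:EstimateB12}.

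The main obstacle, compared with the clean H\"older/Young pairing of Part~B of Theorem~\ref{thm:EnergyBoundsVelocity}, is that the test function $\Pi_{\Div}(e_n - e_{n-k})$ carries no uniform $W^{1,p}$-bound, so the direct $L^{p'}\times L^p$ splitting of~\eqref{eq:Sestimate} is unavailable. The $V$-coercivity must therefore be propagated through the whole estimate via symmetric insertions; this mechanism is precisely what generates the new temporal oscillation term $C_V$, a feature absent from the a priori estimate of Theorem~\ref{thm:EnergyBoundsVelocity}.
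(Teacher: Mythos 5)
Your overall architecture matches the paper's: split off $C_{B^{1/2}_{2,\infty}}$ via the projection, sum the error equation over the window $l\in\{n-k+1,\ldots,n\}$, decouple the stochastic pairing from $e_{(n-k-1)\vee 0}$ by adaptedness, generate $C_V$ from the mismatch between windowed averages and point values, and close with Theorem~\ref{thm:ErrorEstimates}. The critical deviation is your choice of test function $\xi=\Pi_{\Div}(e_n-e_{n-k})$, and this is where the argument has a genuine gap. The paper instead tests with $\xi=\eta_n-u_n$ (exactly as in Theorem~\ref{thm:ErrorEstimates}) and recovers $\tfrac14\norm{\Pi_{\Div}(e_n-e_{n-k})}_{L^2}^2$ on the left through the polarization identity of~\eqref{eq:ErrorLinfL2}, at the price of a telescoping sum $\sum_n(\norm{\Pi_{\Div}e_n}^2-\norm{\Pi_{\Div}e_{n-k}}^2)$ that after division by $k$ is absorbed by $\mathbb{E}\bigl[\max_n\norm{\Pi_{\Div}e_n}^2\bigr]$ from~\eqref{eq:EstimateFinal01}. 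The reason this detour is not optional is the deterministic term: both the coercivity $\bigl(S(A)-S(B)\bigr):(A-B)\eqsim\abs{V(A)-V(B)}^2$ and the generalized Young inequality of Lemma~\ref{lem:GeneralizedYoung} require the quantity paired with $S(\varepsilon u(s))-S(\varepsilon u_l)$ to be a difference $C-B$ with $B=\varepsilon u_l$ (or $\varepsilon u(s)$) and $V(C)-V(B)$ controllable. With $\xi=\eta_n-u_n=(\eta_n-\cdot)-(u_n-\cdot)$ both resulting comparisons, $V(\varepsilon\eta_n)$ and $V(\varepsilon u_n)$ against $V(\varepsilon u(s))$, are admissible error terms ($C_{\mathrm{best}}$ and the output of Theorem~\ref{thm:ErrorEstimates}). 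With your $\xi$, the component $\varepsilon\Pi_{\Div}(\mean{u}_n-\mean{u}_{n-k})$ admits neither route: there is no $L^p$ bound on it (as you note, so the H\"older splitting of~\eqref{eq:Sestimate} fails), and forcing it into the generalized Young structure produces $\norm{V(\varepsilon u(s))-V(\varepsilon\Pi_{\Div}\mean{u}_n)}_{L^2}^2$, which is precisely the quantity whose control requires non-linear gradient stability of the $L^2$-projection --- an assumption the paper deliberately avoids in Theorems~\ref{thm:ErrorEstimates} and~\ref{lem:ErrorEstimateBesov} and only invokes as hypothesis~\ref{it:athm} of Theorem~\ref{thm:Convergence}. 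Your phrase ``the $V$-coercivity must be propagated through the whole estimate via symmetric insertions'' names the obstacle but does not supply a mechanism that overcomes it.

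A second, quantitative issue: even if one rewrites $\Pi_{\Div}(e_n-e_{n-k})$ so as to pair $S(\varepsilon u(s))-S(\varepsilon u_l)$ with $\varepsilon u_{n-k}-\varepsilon u_l$, the resulting oscillation terms compare $V(\varepsilon u(s))$ for $s\in\operatorname{supp}a_l$ with data at index $n-k$, i.e.\ over time lags up to $k\tau$. Under $V(\varepsilon u)\in B^{1/2}_{2,\infty}L^2_\omega L^2_x$ these sum, after the $1/k$ normalization, to $O(k\tau)\seminorm{V(\varepsilon u)}^2$ rather than $O(\tau)\seminorm{V(\varepsilon u)}^2$; the maximum over $k$ is then $O(1)$ and the rate is lost. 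The paper avoids this by the $\mean{V}_n^k$ averaging device, whose point is that the minimizing average over the window of length $k\tau$ is simultaneously close to \emph{all} point values in the window with an error that reduces, via Jensen and the locality of the weights $a_l$, to nearest-neighbour oscillations only --- this is the step that actually produces $C_V$ and it is absent from your sketch. Your treatment of the stochastic term is essentially sound (if anything, heavier than needed: with the maximum over $k$ taken outside the expectation, the pairing with the delayed error vanishes in expectation by adaptedness and no Burkholder--Davis--Gundy argument is required here).
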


\begin{proof}
First of all notice that
\begin{align} \label{eq:Decomp002}
\begin{aligned}
&\max_{k \in [N]} \frac{1}{k} \sum_{n=k}^N \mathbb{E}\left[ \norm{e_n - e_{n-k} }_{L^2_x}^2 \right]\\
&\hspace{2em} \lesssim \max_{k \in [N]} \frac{1}{k} \sum_{n=k}^N \mathbb{E}\left[ \norm{\Pi_{\Div}(e_n - e_{n-k}) }_{L^2_x}^2 \right] + C_{B^{1/2}_{2,\infty}}\\
&\hspace{2em} \lesssim \max_{k \in [N]}\mathbb{E}\left[ \norm{\Pi_{\Div}(e_k - e_{0}) }_{L^2_x}^2 \right] \\
&\hspace{4em} + \max_{k \in [N]} \frac{1}{k} \sum_{n=k+1}^N \mathbb{E}\left[ \norm{\Pi_{\Div}(e_n - e_{n-k}) }_{L^2_x}^2 \right] + C_{B^{1/2}_{2,\infty}}.
\end{aligned}
\end{align}
We will concentrate on the estimate for the second term.

Similarly to~\eqref{eq:Global} we obtain
\begin{align*}
&\int_{\mathcal{O}} (e_n - e_{n-k}) \cdot \xi \dd x + \int_{\mathcal{O}}  \sum_{l=n-k+1}^n \int a_l(s) \left( S\big(\varepsilon u(s) \big) - S(\varepsilon u_l) \right) \dd s : \varepsilon \xi \dd x \\
&\hspace{3em} = \int_{\mathcal{O}} \sum_{l=n-k+1}^n  \int a_l(s) \left( G\big(s,u(s) \big)  - G_n(u_{(l-2)\vee 0}) \right) \dd W(s) \cdot  \xi \dd x.
\end{align*}

Choose~$\xi = \eta_n - u_n \in V_h$ and sum up for $n \in \{k+1,\ldots,N\}$
\begin{align} \label{eq:GlobalK01}
\begin{aligned}
\mathfrak{L}_1^k &:=  \sum_{n=k+1}^N \int_{\mathcal{O}} (e_n - e_{n-k}) \cdot (\eta_n - u_n) \dd x \\
&= -\sum_{n=k+1}^N\int_{\mathcal{O}}  \sum_{l=n-k+1}^n \int a_l(s) \left( S\big(\varepsilon u(s) \big) - S(\varepsilon u_l) \right) \dd s : \varepsilon (\eta_n - u_n)\dd x \\
&\quad + \sum_{n=k+1}^N\int_{\mathcal{O}} \sum_{l=n-k+1}^n  \int a_l(s) \left( G\big(s,u(s) \big)  - G_l(u_{(l-2)\vee 0}) \right) \dd W(s) \cdot  (\eta_n - u_n) \dd x \\
&=: \mathfrak{L}_2^k + \mathfrak{L}_3^k.
\end{aligned}
\end{align}

Analogously to~\eqref{eq:ErrorLinfL2} we can estimate
\begin{align*}
\mathfrak{L}_1^k &\geq \frac{1}{2} \sum_{n=k+1}^N \left( \norm{\Pi_{\Div}e_n}_{L^2_x}^2 -\norm{\Pi_{\Div}e_{n-k}}_{L^2_x}^2 \right) + \frac{1}{4} \sum_{n=k+1}^N \norm{\Pi_{\Div}(e_n - e_{n-k})}_{L^2_x}^2 \\
&\hspace{6em} - \sum_{n=k+1}^N  \norm{\eta_n - \Pi_{\Div}\mean{u}_n}_{L^2_x}^2.
\end{align*}

Applying generalized Young's inequality
\begin{align*}
&\mathfrak{L}_2^k \lesssim \sum_{n=k+1}^N \sum_{l=n-k+1}^n \int a_l(s) \norm{V\big(\varepsilon u(s)\big) - V(\varepsilon u_l)}_{L^2_x}^2 \dd s \\
& +\sum_{n=k+1}^N \sum_{l=n-k+1}^n \int a_l(s) \left( \norm{V\big(\varepsilon u(s)\big) - V(\varepsilon u_n)}_{L^2_x}^2 + \norm{V\big(\varepsilon u(s)\big) - V(\varepsilon \eta_n)}_{L^2_x}^2\right) \dd s.
\end{align*}
The last term compares a time-average on~$\cup_{l=n-k} J_l$ of the non-linear gradient~$V(\varepsilon u)$ to a fixed approximation at time instance~$n$. We correct this miss-match by artificially introducing another averaged version of~$V(\varepsilon u)$. Define
\begin{align*}
\mean{V}_n^k := \frac{1}{\tau k} \sum_{l=n-k+1}^n \int a_l(t) V\big(\varepsilon u(t) \big) \dd t
\end{align*}
or equivalently
\begin{align*}
\mean{V}_n^k = \argmin_{Z \in L^2_x} \frac{1}{k}  \sum_{l=n-k+1}^n \int a_l(t) \norm{ V\big(\varepsilon u(t) \big) - Z }_{L^2_x}^2 \dd t.
\end{align*}
Most importantly,~$\mean{V}_n^k$ is close to all its point-values on $\{n-k+1,\ldots, n\}$ simultaneously. Indeed, let $l \in \{n-k+1,\ldots, n\}$. Then by an application of Jensen's inequality and the local support of the weights~$a_l$,
\begin{align*}
&\int a_l(s) \mathbb{E}\left[  \norm{V\big(\varepsilon u(s)\big) - \mean{V}_n^k}_{L^2_x}^2 \right]\dd s \\
&\hspace{2em}\leq \frac{1}{k \tau}\sum_{l'=n-k+1}^n \int \int a_l(s)  a_{l'}(t) \mathbb{E}\left[    \norm{ V\big(\varepsilon u(s) \big)  -  V\big(\varepsilon u(t)  \big)}_{L^2_x}^2  \right]\dd t \dd s \\
&\hspace{2em}\lesssim  \frac{1}{k \tau} \sum_{l' = l-1}^{l+1} \int \int a_{l'}(s)  a_{l'}(t) \mathbb{E}\left[    \norm{ V\big(\varepsilon u(s) \big)  -  V\big(\varepsilon u(t)  \big)}_{L^2_x}^2  \right]\dd t \dd s
\end{align*}
Therefore,
\begin{align*}
&\sum_{n=k+1}^N  \sum_{l=n-k+1}^n \int a_l(s) \norm{V\big(\varepsilon u(s)\big) - V(\varepsilon u_n)}_{L^2_x}^2 \dd s \\
&\quad \lesssim \sum_{n=k+1}^N \sum_{l=n-k+1}^n \int a_l(s) \norm{V\big(\varepsilon u(s)\big) - \mean{V}_n^k \big)}_{L^2_x}^2 \dd s \\
&\quad+ \sum_{n=k+1}^N k \int a_n(s) \norm{\mean{V}_n^k  - V\big(\varepsilon u(s) \big)}_{L^2_x}^2 \dd s \\
&\quad + \sum_{n=k+1}^N k \int a_n(s) \norm{V\big( \varepsilon u(s) \big) - V(\varepsilon u_n)}_{L^2_x}^2 \dd s \\
&\lesssim \frac{1}{\tau} \sum_{n=1}^N \int \int a_n(s)  a_{n}(t) \mathbb{E}\left[    \norm{ V\big(\varepsilon u(s) \big)  -  V\big(\varepsilon u(t)  \big)}_{L^2_x}^2  \right]\dd t \dd s \\
&\quad + \sum_{n=k+1}^N k \int a_n(s) \norm{V\big( \varepsilon u(s) \big) - V(\varepsilon u_n)}_{L^2_x}^2 \dd s.
\end{align*}

The stochastic term~$\mathcal{L}_3$ is estimated in expectation. We artificially introduce the analytic solution and increments 
\begin{align*}
&\mathcal{L}_3^k =\mathbb{E} \left[ \sum_{n=k+1}^N \int_{\mathcal{O}} \sum_{l=n-k+1}^n  \int a_l(s) \left( G\big(s,u(s) \big)  - G_l(u_{(l-2)\vee 0}) \right) \dd W(s) \cdot  (\eta_n - \Pi_{\Div} \mean{u}_n) \dd x \right]\\
&+ \mathbb{E} \left[ \sum_{n=k+1}^N \int_{\mathcal{O}} \sum_{l=n-k+1}^n  \int a_l(s) \left( G\big(s,u(s) \big)  - G_l(u_{(l-2)\vee 0}) \right) \dd W(s) \cdot  \Pi_{\Div} (e_n - e_{(n-k-1)\vee 0} ) \dd x \right].
\end{align*}
H\"older's and Young's inequalities and the It\^o isometry show
\begin{align*}
&\mathbb{E} \left[ \sum_{n=k+1}^N \int_{\mathcal{O}} \sum_{l=n-k+1}^n  \int a_l(s) \left( G\big(s,u(s) \big)  - G_l(u_{(l-2)\vee 0}) \right) \dd W(s) \cdot  \Pi_{\Div} (e_n - e_{(n-k-1)\vee 0} ) \dd x \right]\\
&\leq \delta \mathbb{E} \left[ \sum_{n=k+1}^N \norm{  \Pi_{\Div} (e_n - e_{(n-k-1)\vee 0} )}_{L^2_x}^2 \right] \\
&\quad + c_\delta \mathbb{E} \left[ \sum_{n=k+1}^N \norm{\sum_{l=n-k+1}^n  \int a_l(s) \left( G\big(s,u(s) \big)  - G_l(u_{(l-2)\vee 0}) \right) \dd W(s)}_{L^2_x}^2 \right] \\
&\lesssim \delta \mathbb{E} \left[ \sum_{n=k+1}^N \norm{  \Pi_{\Div} (e_n - e_{n-k} )}_{L^2_x}^2 \right] + \mathbb{E} \left[ \sum_{n=1}^N \norm{  \Pi_{\Div} (e_n - e_{n-1} )}_{L^2_x}^2 \right]\\
&\quad + c_\delta \mathbb{E} \left[ \sum_{n=k+1}^N \sum_{l=n-k+1}^n  \int a_l^2(s) \norm{G\big(s,u(s) \big)  - G_l(u_{(l-2)\vee 0})}_{L_2(\mathfrak{U}; L^2_x)}^2 \dd s \right].
\end{align*}
Similarly,
\begin{align*}
&\mathbb{E} \left[ \sum_{n=k+1}^N \int_{\mathcal{O}} \sum_{l=n-k+1}^n  \int a_l(s) \left( G\big(s,u(s) \big)  - G_l(u_{(l-2)\vee 0}) \right) \dd W(s) \cdot  (\eta_n - \Pi_{\Div} \mean{u}_n) \dd x \right] \\
&\hspace{3em} \lesssim \mathbb{E} \left[ \sum_{n=k+1}^N \sum_{l=n-k+1}^n  \int a_l^2(s) \norm{G\big(s,u(s) \big)  - G_l(u_{(l-2)\vee 0})}_{L_2(\mathfrak{U}; L^2_x)}^2 \dd s \right] \\
&\hspace{4em} + \mathbb{E} \left[ \sum_{n=k+1}^N   \norm{\eta_n - \Pi_{\Div} \mean{u}_n)}_{L^2_x}^2 \right].
\end{align*}

Let us go back to~\eqref{eq:GlobalK01}. Take expectation, divide by~$k$ and take the maximum over~$k \in [N]$, using the previously established estimates,
\begin{align*}
&(1-\delta) \max_{k\in [N]} \frac{1}{k} \sum_{n=k+1}^N \mathbb{E}\left[\norm{\Pi_{\Div}(e_n - e_{n-k})}_{L^2_x}^2 \right] \\
&\lesssim \mathbb{E}\left[\max_{n \in [N]}\norm{\Pi_{\Div} e_n}_{L^2_x}^2 \right] + \mathbb{E} \left[ \sum_{n=1}^N \norm{  \Pi_{\Div} (e_n - e_{n-1} )}_{L^2_x}^2 \right] \\
&\hspace{3em} + \mathbb{E}\left[ \sum_{n=1}^N \int a_n(s) \norm{V\big(\varepsilon u(s)\big) - V(\varepsilon u_n)}_{L^2_x}^2 \dd s \right]  \\
&\quad+  c_\delta \mathbb{E} \left[ \sum_{n=1}^N  \int a_n^2(s) \norm{G\big(s,u(s) \big)  - G_n(u_{(n-2)\vee 0})}_{L_2(\mathfrak{U}; L^2_x)}^2 \dd s \right]\\
&\quad + \mathbb{E}\left[  \sum_{n=1}^N  \norm{\Pi_{\Div}\mean{u}_n - \eta_n}_{L^2_x}^2 + \int a_n(s) \norm{V\big(\varepsilon u(s)\big) - V(\varepsilon \eta_n)}_{L^2_x}^2 \dd s \right] \\
&\quad +\frac{1}{\tau} \sum_{n=1}^N \int \int a_n(s)  a_{n}(t) \mathbb{E}\left[    \norm{ V\big(\varepsilon u(s) \big)  -  V\big(\varepsilon u(t)  \big)}_{L^2_x}^2  \right]\dd t \dd s.
\end{align*}
Next, we choose~$\delta>0$ sufficiently small and use~\eqref{eq:EstimateFinal01}. Additionally, we may take the infimum over~$\eta \in [V_{h,\Div}]^N$ to find
\begin{align*} 
&\max_{k\in [N]} \frac{1}{k} \sum_{n=k+1}^N \mathbb{E}\left[\norm{\Pi_{\Div}(e_n - e_{n-k})}_{L^2_x}^2 \right] \lesssim C_{\mathrm{init}} + C_G + C_{\mathrm{best}} + C_{V}.
\end{align*}
This together with~\eqref{eq:Decomp002} establish~\eqref{eq:EstimateB12} and the proof is finished.
\end{proof}

\begin{remark}
We expect that similar to the second scale of the a priori estimates, cf. Section~\ref{sec:SecondScale}, it is possible to establish a decomposition of the error in $L^2_\omega B^{1/2}_{2,\infty} L^2_x$. The data error needs to be measured on $L^2_\omega L^\infty_t L_2(\mathfrak{U};L^2_x)$, e.g.,
\begin{align*}
\tilde{C}_G = \mathbb{E}\left[\max_{n \in [N]} \int \frac{a_n^2(t)}{\tau} \norm{G\big(t, u(t) \big)- G_n(u_{(n-2)\vee 0})}_{L_2(\mathfrak{U};L^2_x)}^2 \dd t \right].
\end{align*}   
\end{remark}

The temporal oscillation of the non-linear gradient can be controlled by temporal regularity of the non-linear gradient. The most important ingredient is a quantified decay of increments. It can be measured in terms of Besov spaces.

\begin{lemma} \label{lem:RegulartiyV}
Let $\alpha \in (0,1)$ and $r \in [1,\infty]$. Moreover, assume  $V(\varepsilon u) \in B^{\alpha}_{2,r} L^2_\omega L^2_x$. Then
\begin{align}\label{eq:EstimateTemporalOscillation}
C_V \lesssim \tau^{2\alpha} \seminorm{V(\varepsilon u)}_{ B^{\alpha}_{2,r} L^2_\omega L^2_x}^2.
\end{align}
\end{lemma}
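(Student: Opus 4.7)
The plan is to exploit the temporal locality of the weights~$a_n$ to recognize~$C_V$ as a weighted average of Besov-type increments of~$V(\varepsilon u)$ on time-scales no larger than~$\tau$, and then use the Besov regularity to extract a factor of~$\tau^{2\alpha}$. Let me write $g(t) := V(\varepsilon u(t))$, viewed as a function with values in the Banach space $X := L^2_\omega L^2_x$, so that the integrand in $C_V$ equals $\norm{g(s) - g(t)}_X^2$.

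First I would observe that $a_n \leq 1_{J_{n-1} \cup J_n}$ by~\eqref{eq:Weights} and that the union $J_{n-1} \cup J_n$ has length~$\tau$. Hence if $a_n(s)a_n(t) \neq 0$ then $\abs{s - t} \leq \tau$. Bounding $a_n$ by the indicator and performing the change of variables $h = t - s$ gives
\begin{align*}
\int \int a_n(s) a_n(t) \norm{g(s) - g(t)}_X^2 \dt \ds
\leq \int_{-\tau}^{\tau} \int_I 1_{J_{n-1} \cup J_n}(s) 1_{J_{n-1} \cup J_n}(s+h) \norm{\tau_h g(s)}_X^2 \ds \dd h.
\end{align*}
Summing over $n \in [N]$ and noting that for any fixed $s$ there are at most two values of~$n$ with $s \in J_{n-1} \cup J_n$, the sum of the indicator product is bounded by a universal constant, so
\begin{align*}
\sum_{n=1}^N \int\int a_n(s) a_n(t) \norm{g(s) - g(t)}_X^2 \dt \ds \lesssim \int_{-\tau}^{\tau} \norm{\tau_h g}_{L^2(I \cap (I - \{h\});X)}^2 \dd h.
\end{align*}

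Next I would invoke the Besov embedding $B^\alpha_{2,r} \hookrightarrow B^\alpha_{2,\infty} = N^{\alpha,2}$ (which holds for every $r \in [1,\infty]$) to obtain the pointwise bound $\norm{\tau_h g}_{L^2(I \cap (I - \{h\});X)} \leq \abs{h}^\alpha \seminorm{g}_{B^\alpha_{2,\infty}(I;X)} \lesssim \abs{h}^\alpha \seminorm{g}_{B^\alpha_{2,r}(I;X)}$. Plugging this in and integrating,
\begin{align*}
\int_{-\tau}^{\tau} \norm{\tau_h g}_{L^2}^2 \dd h \leq \seminorm{g}_{B^\alpha_{2,r}(I;X)}^2 \int_{-\tau}^{\tau} \abs{h}^{2\alpha} \dd h = \tfrac{2}{2\alpha + 1} \tau^{2\alpha + 1} \seminorm{g}_{B^\alpha_{2,r}(I;X)}^2.
\end{align*}

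Finally I would divide by~$\tau$ as in the definition of~$C_V$ to absorb one power of~$\tau$, yielding $C_V \lesssim \tau^{2\alpha} \seminorm{V(\varepsilon u)}_{B^\alpha_{2,r} L^2_\omega L^2_x}^2$, which is~\eqref{eq:EstimateTemporalOscillation}. No step is genuinely hard here; the only real subtlety is the combinatorial counting that keeps the sum over~$n$ uniformly bounded after localizing to the supports of the weights, which is what turns the discrete double-sum into a single continuous integral over increments of size at most~$\tau$.
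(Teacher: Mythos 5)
Your argument is correct and follows essentially the same route as the paper's proof: both exploit the temporal locality of the weights $a_n$ together with $a_n\le 1$, pass via Fubini and a change of variables to increments $\tau_h$ with $\abs{h}\lesssim \tau$, and then invoke the Besov regularity of $V(\varepsilon u)$; the only difference is that the paper keeps the fine index $r$ by applying H\"older's inequality in the $h$-variable, whereas you first use the embedding $B^{\alpha}_{2,r}\hookrightarrow B^{\alpha}_{2,\infty}$ and integrate the resulting pointwise-in-$h$ bound, which is equally valid. One harmless slip: $J_{n-1}\cup J_n$ has length $2\tau$ (not $\tau$), so $h$ should range over $[-2\tau,2\tau]$; this affects only the constant.
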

\begin{proof}
We rewrite the integration over $(s,t) \in \mathrm{supp}(a_n) \times \mathrm{supp}(a_n)$ to $(h,t) \in [0,2\tau] \times I_n^h$ where $I_h^n = \{ t \in \mathrm{supp}(a_n) | \, t+h \in \mathrm{supp}(a_n) \}$. Moreover, notice that $\sum_{n=1}^N 1_{\{I_h^n\}} \leq 2\, 1_{\{I_h\}}$ for $I_h = \{t \in I | \, t+h \in I\}$. Thus, Fubini's theorem together with $a_n \leq 1$ show
\begin{align*}
C_V &= \frac{1}{\tau} \sum_{n=1}^N  \int \int  a_n(s)  a_{n}(t) \mathbb{E}\left[    \norm{ V\big(\varepsilon u(s) \big)  -  V\big(\varepsilon u(t)  \big)}_{L^2_x}^2  \right]\dd t \dd s\\
&\lesssim \frac{1}{\tau}  \int_0^{2\tau} \int_{I_h} \mathbb{E}\left[    \norm{ V\big(\varepsilon u(t+h) \big)  -  V\big(\varepsilon u(t)  \big)}_{L^2_x}^2  \right]\dd t \dd h.
\end{align*}
H\"older's inequality implies
\begin{align*}
&\int_0^{2\tau} \int_{I_h} \mathbb{E}\left[    \norm{ V\big(\varepsilon u(t+h) \big)  -  V\big(\varepsilon u(t)  \big)}_{L^2_x}^2  \right]\dd t \dd h\\
&\hspace{3em} \leq  \left( \int_0^{2\tau} \left( \int_{I_h}    \frac{\mathbb{E}\left[    \norm{ V\big(\varepsilon u(t+h) \big)  -  V\big(\varepsilon u(t)  \big)}_{L^2_x}^2  \right]}{h^{2\alpha}}\dd t \right)^r \frac{\dd h}{h} \right)^{1/r} \\
&\hspace{5em}  \left( \int_0^{2\tau} h^{(2 \alpha + 1/r){r'}} \dd h \right)^{1/r'} \\
&\hspace{3em} \lesssim \seminorm{V(\varepsilon u)}_{B^{\alpha}_{2,r} L^2_\omega L^2_x}^2 \tau^{2\alpha +1}.
\end{align*}
This finishes the proof.
\end{proof}

\section{Convergence rates} \label{sec:Convergence}
In this section we give an example for a possible choice of~$(V_h,Q_h)$ and a sufficient condition on the data approximation of~$G$ such that the error estimates of Theorem~\ref{thm:ErrorEstimates} and Theorem~\ref{lem:ErrorEstimateBesov} provide a quantified convergence rate. 

\subsection{Finite elements}
Let $\mathcal{T}_h$ denote a regular partition (triangulation)
of~$\mathcal{O}$ (no hanging nodes), which consists of closed $n$-simplices called \emph{elements}. For each element ($n$-simplex) $K\in \mathcal{T}_h$ we denote by $h_K$ the diameter of $T$, and by $\rho_K$ the supremum of the diameters of inscribed balls. The neighborhood of an element~$K$ is denoted by $\omega_K$ and is the union of all elements that share a facet with element~$K$.

We assume that $\mathcal{T}_h$ is \emph{shape regular}, that is there exists a constant~$\gamma$ (the shape regularity constant) such that
\begin{align}
  \label{eq:nondeg}
  \max_{K \in \mathcal{T}_h} \frac{h_K}{\rho_K} \leq \gamma.
\end{align}
We define the maximal mesh-size by
\begin{align*}
  h &:= \max_{K \in \mathcal{T}_h} h_K.
\end{align*}
We assume further that our triangulation is \emph{quasi-uniform}, i.e.
\begin{align}
  \label{eq:quasi-uniform}
  h_K \eqsim h \qquad \text{for all $K \in \mathcal{T}_h$}.
\end{align}
For $\ell\in \setN _0$ we denote by $\mathscr{P}_\ell(\mathcal{O})$ the
polynomials on $\mathcal{O}$ of degree less than or equal to
$\ell$. 

For fixed $r \in \setN$ we define the vector valued finite element spaces $X_h$ and $V_h$ by
\begin{subequations} \label{def:velocity-FEM}
\begin{align}\label{def:Xh}
    X_h &:= \set{v \in W^{1,\infty}_{x} \,:\, v|_K \in \mathscr{P}_r(K) 
      \,\,\forall K\in \mathcal{T}_h},\\ \label{def:Vh}
   V_h &:= X_h \cap W^{1,1}_{0,x}.
\end{align} 
\end{subequations}
Similarly, we define the scalar valued finite element spaces
\begin{subequations} \label{def:pressure-FEM}
\begin{align}\label{def:Yh}
    Y_h &:= \set{q \in L^\infty_x \,:\, q|_K \in \mathscr{P}_r(K) \,\,\forall K\in \mathcal{T}_h},\\ \label{def:Qh}
   Q_h &:= Y_h \cap L^1_{0,x}.
\end{align} 
\end{subequations}

In order to control best approximations one needs to find a systematic way to construct discrete functions that are close to the target function.  Additionally, so far we have measured the pressure components in its natural norms, cf.~\eqref{eq:StochPresBoth}. However, these norms depend on the discretization parameter~$h$. It is customary to find a uniform regularity measure. 

Fortunately, both concerns can be addressed with the help of projection operators. Following~\cite{BelenkiBerselliDieningRozicka2012} it is sufficient to ask for the following abstract conditions:
\begin{assumption}\label{ass:VelocityProj}
We assume that $\mathscr{P}_1(\mathcal{T}_h) \subset X_h$ and there exists a linear projection $\Theta^h_{\Div} : L^2_x \cap W^{1,p}_{x} \to X_h$ which
\begin{enumerate}
\item preserves divergence in the $Y_h^*$-sense, i.e., for all $u \in L^2_x \cap W^{1,p}_{x}$ and $q \in Y_h$,
\begin{align}
\left( \Div u, q \right) = \left( \Div \Theta_{\Div}^h u, q \right)
\end{align}
\item preserves zero boundary values, i.e., $\Theta_{\Div}^h\big( L^2_x \cap W^{1,p}_{0,x} \big) \subset V_h$,
\item is locally $W^{1,1}_x$-stable in the sense that, for all $u \in L^2_x \cap W^{1,p}_x$ and $K \in \mathcal{T}_h$,
\begin{align}
\dashint_{K} \abs{\Theta_{\Div}^h u} \dd x \leq c \dashint_{\omega_K} \abs{u} \dd x + c \dashint_{\omega_K} h_K\abs{\nabla u} \dd x.
\end{align}
\end{enumerate}
\end{assumption}

\begin{assumption}\label{ass:PressureProj}
We assume that~$Y_h$ contains the constant functions, i.e., $\mathbb{R} \subset Y_h$, and there exists a linear projection~$\Theta_{\pi}^h: L^1_x \to Y_h$ which is locally $L^1_x$-stable in the sense that for all $q\in L^1_x$ and $K \in \mathcal{T}_h$ 
\begin{align}
\dashint_{K} \abs{\Theta_{\pi}^h q} \dd x \leq c \int_{\omega_K} \abs{q} \dd x.
\end{align}
\end{assumption}

\begin{remark}
There are many finite element spaces~$(V_h,Q_h)$ that satisfy Assumption~\ref{ass:VelocityProj} and Assumption~\ref{ass:PressureProj}, e.g., Taylor-Hood, MINI and conforming Crouzeix-Raviart element spaces. However, these spaces are not exactly divergence-free. 

Fortunately, there are also exactly divergence-free finite elements spaces that satisfy Assumption~\ref{ass:VelocityProj} and Assumption~\ref{ass:PressureProj}, e.g., Guzmán-Neilan~\cite{MR3120580,MR3269433} and Scott-Vogelius~\cite{MR813691,MR3882274} element spaces.  

For more details we refer to~\cite[Example~2.26 and Example~2.28]{tscherpel2018finite}.
\end{remark}

The next lemma can be derived along the lines of~\cite[Lemma~4.1]{BelenkiBerselliDieningRozicka2012}.
\begin{lemma}[Inf-sup condition]
Let Assumption~\ref{ass:VelocityProj} be satisfied. Then there exists a constant $c > 0$ such that for all $q \in Q_h$
\begin{align} \label{eq:Infsup}
\norm{q}_{L^{p'}_x} &\leq c \norm{q}_{Q_{\mathrm{det}}} \quad \text{ and } \quad 
\norm{q}_{L^2_x}  \leq c \norm{q}_{Q_{\mathrm{sto}}}.
\end{align}
\end{lemma}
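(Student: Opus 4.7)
The plan is to combine the classical Nečas--Bogovskii inf-sup with the Fortin-type projection $\Theta_{\Div}^h$ supplied by Assumption~\ref{ass:VelocityProj}, and then to transport the resulting discrete test function into $V_{h,\Div}^\perp$ via the $L^2$-orthogonal projection $\Pi_{\Div}^\perp$. I would run both inequalities in parallel, indexed by $r\in\{p,2\}$.

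Given $q\in Q_h\subset L^{r'}_{0,x}$, the continuous inf-sup (i.e.\ surjectivity of $\Div:W^{1,r}_{0,x}\to L^{r}_{0,x}$ via the Bogovskii operator) produces some $\tilde v\in W^{1,r}_{0,x}$ with $(q,\Div\tilde v)\gtrsim\norm{q}_{L^{r'}_x}\norm{\nabla\tilde v}_{L^r_x}$. Next, I would set $w_h:=\Theta_{\Div}^h\tilde v\in V_h$; the divergence-preservation (combined with $Q_h\subset Y_h$) gives $(q,\Div w_h)=(q,\Div\tilde v)$, and the local $W^{1,1}$-stability of $\Theta_{\Div}^h$ together with $\mathscr{P}_1(\mathcal{T}_h)\subset X_h$, shape regularity, and quasi-uniformity~\eqref{eq:quasi-uniform} upgrade to the global $W^{1,r}$-stability $\norm{\nabla w_h}_{L^r_x}\lesssim\norm{\nabla\tilde v}_{L^r_x}$ by a standard Scott--Zhang/Bramble--Hilbert argument on the patches $\omega_K$. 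Finally, I would set $v:=\Pi_{\Div}^\perp w_h\in V_{h,\Div}^\perp$; since $\Pi_{\Div}w_h\in V_{h,\Div}$ is annihilated by $q\in Q_h$ in the divergence pairing, $(q,\Div v)=(q,\Div w_h)=(q,\Div\tilde v)$.

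It remains to control the denominators. In the stochastic case ($r=2$), $\Pi_{\Div}^\perp$ is an $L^2$-contraction, so $\norm{v}_{L^2_x}\leq\norm{w_h}_{L^2_x}\lesssim\norm{\nabla w_h}_{L^2_x}\lesssim\norm{\nabla\tilde v}_{L^2_x}$, where Poincar\'e is legitimate because $w_h\in V_h\subset W^{1,1}_{0,x}$; inserting $v$ into the definition of $\norm{q}_{Q_{\mathrm{sto}}}$ immediately yields $\norm{q}_{L^2_x}\lesssim\norm{q}_{Q_{\mathrm{sto}}}$. In the deterministic case ($r=p$), the identical reasoning requires instead the bound $\norm{\nabla\Pi_{\Div}^\perp w_h}_{L^p_x}\lesssim\norm{\nabla w_h}_{L^p_x}$, namely the $W^{1,p}$-stability of the $L^2$-projection onto the $L^2$-closed subspace $V_{h,\Div}^\perp\subset V_h$. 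This is the main technical hurdle and is precisely where quasi-uniformity enters essentially: on quasi-uniform meshes the unconstrained $L^2$-projection onto $V_h$ is classically $W^{1,p}$-stable (Crouzeix--Thom\'ee), and this transfers to the constrained setting since $V_{h,\Div}$ is $L^2$-closed in $V_h$. Granted this, the same chain of estimates delivers $\norm{q}_{L^{p'}_x}\lesssim\norm{q}_{Q_{\mathrm{det}}}$.
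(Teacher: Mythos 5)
Your overall strategy --- continuous inf-sup via the Bogovskii operator, composed with the Fortin-type projection $\Theta_{\Div}^h$ whose local $W^{1,1}$-stability and polynomial reproduction upgrade to local, hence global, $W^{1,r}$-stability --- is exactly what the paper intends: its entire ``proof'' is the remark that the lemma can be derived along the lines of \cite[Lemma~4.1]{BelenkiBerselliDieningRozicka2012}, and that lemma is proved in precisely this way. You also correctly notice the wrinkle that this reference only yields the inf-sup condition with the supremum taken over all of $V_h$, whereas $\norm{\cdot}_{Q_{\mathrm{det}}}$ and $\norm{\cdot}_{Q_{\mathrm{sto}}}$ are defined with the supremum over $V_{h,\Div}^\perp$, and you bridge this by testing with $v=\Pi_{\Div}^\perp w_h$, using that $(q,\Div\cdot)$ vanishes on $V_{h,\Div}$ for $q\in Q_h$. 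For the stochastic norm this costs nothing, since $\Pi_{\Div}^\perp$ is an $L^2$-contraction; that half of your argument is sound (up to the cosmetic point that, since $q\in Q_h\subset L^\infty_x$, you should take the Bogovskii field in $W^{1,\max(2,p)}_{0,x}$ so that $\Theta_{\Div}^h$ is applicable and both the $L^2$- and $L^p$-gradient bounds are available).

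The deterministic half has a genuine gap. You need $\norm{\nabla\Pi_{\Div}^\perp w_h}_{L^p_x}\lesssim\norm{\nabla w_h}_{L^p_x}$, i.e.\ uniform $W^{1,p}$-stability of the $L^2$-orthogonal projection onto the subspace $V_{h,\Div}^\perp$, and you assert that this ``transfers'' from the Crouzeix--Thom\'ee stability of the $L^2$-projection onto $V_h$ because $V_{h,\Div}^\perp$ is $L^2$-closed. That deduction is not valid: every finite-dimensional subspace is $L^2$-closed, yet the $L^2$-projection onto a generic subspace of $V_h$ is not even $H^1$-stable uniformly in $h$ (a one-dimensional subspace spanned by the sum of a low-frequency and a high-frequency discrete basis function already defeats it, with the stability constant degenerating like $h^{-1}$). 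The Crouzeix--Thom\'ee argument exploits the locality of the nodal basis of $V_h$, which yields exponential decay of the projection kernel; $V_{h,\Div}^\perp$ is cut out by global orthogonality constraints and inherits none of that structure. So either you must prove $W^{1,p}$-stability of $\Pi_{\Div}^\perp$ for the specific pairs $(V_h,Q_h)$ at hand, or you should produce the test function in $V_{h,\Div}^\perp$ with controlled $W^{1,p}$-norm by a different construction; as written, this step would fail. (The paper's one-line proof glosses over exactly the same point, since the cited lemma bounds $\sup_{v\in V_h}$ rather than $\sup_{v\in V_{h,\Div}^\perp}$, but the burden is on the proof to close it.)
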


%

\begin{lemma} \label{lem:StabilityProj}
Let Assumption~\ref{ass:VelocityProj} and Assumption~\ref{ass:PressureProj} be satisfied. Then
\begin{enumerate}
\item \label{it:linearEstimate}(linear estimates) for all $u \in W^{2,2}_x$
\begin{align} \label{eq:linearInterpolationEst}
\norm{u - \Theta_{\Div}^h u}_{L^2_x} + h\norm{\nabla( u - \Theta_{\Div}^h u)}_{L^2_x} &\lesssim h^2 \norm{\nabla^2 u}_{L^2_x},
\end{align}
and for all $r \in [1,\infty)$ and $q \in L^r_x$
\begin{align}
\norm{q - \Theta_\pi^h q}_{L^r_x} \lesssim h\norm{\nabla q}_{L^r_x}.
\end{align}
\item \label{it:nonlinearEstimate}(non-linear estimate) for all $V(\varepsilon u) \in W^{1,2}_x$
\begin{align}\label{eq:nonlinearInterpolationEst}
\norm{V(\varepsilon u) -  V(\varepsilon \Theta_{\Div}^h u)}_{L^2_x} \lesssim h \norm{\nabla V(\varepsilon u)}_{L^2_x}.
\end{align}
\end{enumerate}
\end{lemma}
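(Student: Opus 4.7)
\textbf{Proof plan for Lemma~\ref{lem:StabilityProj}.}

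The proof is local: all three estimates will be derived elementwise and then summed (quasi-uniformity makes the summation trivial). The common engine is that $\Theta^h_{\Div}$ reproduces $\mathscr{P}_1$ (since $\mathscr{P}_1 \subset X_h$ and the projection acts as the identity on $X_h$) and $\Theta^h_\pi$ reproduces constants (by Assumption~\ref{ass:PressureProj}), combined with the respective local $W^{1,1}$- and $L^1$-stability.

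\emph{Part (1), linear bounds.} Fix $K \in \mathcal{T}_h$ and let $P_K u \in \mathscr{P}_1$ be an averaged Taylor polynomial of $u$ on $\omega_K$. Since $\Theta^h_{\Div} P_K u = P_K u$ on $K$, linearity yields $u - \Theta^h_{\Div} u = (u - P_K u) - \Theta^h_{\Div}(u - P_K u)$. The local $W^{1,1}$-stability of $\Theta^h_{\Div}$ gives
\begin{align*}
\dashint_K \abs{u - \Theta^h_{\Div} u}\dx \lesssim \dashint_{\omega_K} \abs{u-P_K u}\dx + h_K \dashint_{\omega_K} \abs{\nabla (u - P_K u)}\dx.
\end{align*}
Bramble–Hilbert on $\omega_K$ estimates both right-hand terms by $h_K^2 \dashint_{\omega_K}\abs{\nabla^2 u}\dx$. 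Squaring, multiplying by $\abs{K}$, and summing over $K$ (using shape-regularity and the finite overlap of the patches $\omega_K$) yields the $L^2$-bound in~\eqref{eq:linearInterpolationEst}. For the gradient part one writes $\nabla(u - \Theta^h_{\Div} u) = \nabla(u - P_K u) - \nabla\Theta^h_{\Div}(u - P_K u)$ on $K$; the second term is a polynomial and can be inverse-estimated by its $L^1$-norm (hence by the right-hand side above divided by $h_K$), while the first is again handled by Bramble–Hilbert. The pressure estimate is the same argument with $P_K$ chosen as the $L^1$-mean of $q$ on $\omega_K$, using that $\Theta^h_\pi$ reproduces constants and is locally $L^1$-stable.

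\emph{Part (2), non-linear bound.} This is the genuinely non-trivial step. The plan is to exploit the standard equivalence
\begin{align*}
\abs{V(A) - V(B)}^2 \eqsim \varphi_{\abs{A}}(\abs{A-B}),
\end{align*}
where $\varphi_a(t) = (\kappa + a + t)^{p-2} t^2$ is the shifted $N$-function (see Appendix~\ref{app:Appendix} / Lemma~\ref{lem:Relation}). On each $K$, choose an affine $P_K u$ whose gradient is (close to) the piecewise average $\mean{\varepsilon u}_K$. Because $\Theta^h_{\Div}$ reproduces $P_K u$ and is linear, the difference $\varepsilon u - \varepsilon \Theta^h_{\Div} u$ splits as
\begin{align*}
\varepsilon u - \varepsilon \Theta^h_{\Div} u = (\varepsilon u - \mean{\varepsilon u}_K) - \varepsilon\Theta^h_{\Div}(u - P_K u).
\end{align*}
Applying the shifted stability of $\Theta^h_{\Div}$ (Assumption~\ref{ass:VelocityProj}(iii), used with respect to the $N$-function $\varphi_{\mean{\varepsilon u}_K}$ in the manner of \cite{BelenkiBerselliDieningRozicka2012}) one bounds
\begin{align*}
\int_K \varphi_{\mean{\varepsilon u}_K}\bigl(\abs{\varepsilon\Theta^h_{\Div}(u-P_K u)}\bigr)\dx \lesssim \int_{\omega_K} \varphi_{\mean{\varepsilon u}_K}\bigl(\abs{\varepsilon u - \mean{\varepsilon u}_K}\bigr)\dx,
\end{align*}
and a shift-change lemma relates $\varphi_{\mean{\varepsilon u}_K}(\abs{\varepsilon u - \mean{\varepsilon u}_K})$ to $\abs{V(\varepsilon u) - \mean{V(\varepsilon u)}_K}^2$ on $\omega_K$. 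A Poincaré inequality on the patch then gives
\begin{align*}
\int_{\omega_K}\abs{V(\varepsilon u) - \mean{V(\varepsilon u)}_K}^2 \dx \lesssim h_K^2 \int_{\omega_K}\abs{\nabla V(\varepsilon u)}^2 \dx.
\end{align*}
Summing over $K$ (finite overlap of $\omega_K$) produces the claimed bound~\eqref{eq:nonlinearInterpolationEst}.

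The main obstacle is the non-linear step: transferring the $W^{1,1}$-stability of $\Theta^h_{\Div}$ into a stability statement for the shifted $N$-function $\varphi_{\mean{\varepsilon u}_K}$, and carrying out the shift-change between $\varphi_{\mean{\varepsilon u}_K}(\abs{\varepsilon u - \mean{\varepsilon u}_K})$ and $\abs{V(\varepsilon u) - \mean{V(\varepsilon u)}_K}^2$ uniformly in the local values of $\abs{\varepsilon u}$. Both ingredients are available in the deterministic $p$-Stokes literature (cf.~\cite{BelenkiBerselliDieningRozicka2012}) and carry over verbatim here because the estimate is purely pathwise.
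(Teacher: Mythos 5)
Your proposal is correct and follows essentially the same route as the paper, which simply records part (1) as a standard interpolation result and defers part (2) to \cite[Theorem~5.1]{BelenkiBerselliDieningRozicka2012}; your sketch is precisely the argument of those references (polynomial reproduction plus local $W^{1,1}$-/$L^1$-stability and Bramble--Hilbert for the linear bounds, and the shifted $N$-function machinery with a shift change and a patchwise Poincar\'e inequality for $V(\varepsilon u)$ for the non-linear bound). The only point to tidy up in a full write-up is the $L^1\to L^2$ upgrade of the local stability bound, which uses norm equivalence on the finite-dimensional polynomial space $\Theta^h_{\Div}(u-P_K u)\vert_K$ rather than a direct application of Jensen's inequality.
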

\begin{proof}
Part~\ref{it:linearEstimate} is a standard result. The nonlinear estimate~\eqref{eq:nonlinearInterpolationEst} is derived in~\cite[Theorem~5.1]{BelenkiBerselliDieningRozicka2012}.
\end{proof}

From now on we will assume that Assumption~\ref{ass:VelocityProj} and Assumption~\ref{ass:PressureProj} are satisfied.

\subsection{Initial error}
From a mathematical perspective the optimal choice as an initial condition of algorithm~\eqref{eq:velocity001} is the $L^2_x$-projection of the analytic initial condition, i.e., $u_0^h = \Pi_{\Div} u_0$. Then the initial error does not influence the error at later times since $C_{\mathrm{init}} = 0$. In general, it is a non-trivial task to compute the $L^2_x$-projection since one needs to evaluate integrals exactly. The discretization of the integrals, e.g. by quadrature rules or random evaluations, introduces an additional error. Whether the error can be bounded in terms of regularity of the initial condition needs to be determined. 

\begin{lemma} \label{lem:Initproj}
Let $\Pi_{\mathrm{init}}^h: L^2_x \to V_h$ be a linear projection such that for all $v\in L^2_x$
\begin{align*}
\norm{\Pi_{\mathrm{init}}^h v}_{L^2_x} \leq C \norm{v}_{L^2_x}.
\end{align*}
Then it holds
\begin{align*}
\norm{\Pi_{\mathrm{init}}^h v - \Pi_{\Div} v}_{L^2_x} \leq 2C \inf_{q \in V_{h,\Div}} \norm{v - q}_{L^2_x}.
\end{align*}
\end{lemma}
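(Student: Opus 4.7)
The proof will be a short interpolation argument exploiting a common fixed point of the two projections. The starting observation is that any $q \in V_{h,\Div}$ is fixed by both: one has $\Pi_{\mathrm{init}}^h q = q$ because $V_{h,\Div} \subset V_h$ and $\Pi_{\mathrm{init}}^h$ is a linear projection onto $V_h$, and $\Pi_{\Div} q = q$ because $\Pi_{\Div}$ is the $L^2$-orthogonal projection onto $V_{h,\Div}$ itself. Thus $V_{h,\Div}$ is exactly the class of test functions on which both projections act as the identity, which makes it the natural scaffold for a best-approximation estimate.

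Using these identities I would, for an arbitrary $q \in V_{h,\Div}$, rewrite the error as
\[
\Pi_{\mathrm{init}}^h v - \Pi_{\Div} v = \Pi_{\mathrm{init}}^h(v - q) - \Pi_{\Div}(v - q),
\]
and apply the triangle inequality. The first summand is handled by the stability assumption $\norm{\Pi_{\mathrm{init}}^h w}_{L^2} \le C \norm{w}_{L^2}$, while the second is handled by the universal contraction $\norm{\Pi_{\Div} w}_{L^2} \le \norm{w}_{L^2}$ of an $L^2$-orthogonal projection. This yields
\[
\norm{\Pi_{\mathrm{init}}^h v - \Pi_{\Div} v}_{L^2} \le (C+1)\norm{v - q}_{L^2} \le 2C \norm{v - q}_{L^2},
\]
where the last step absorbs the constant using $C \ge 1$, which holds for any non-trivial projection. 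Passing to the infimum over $q$ then concludes the proof.

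There is essentially no obstacle: the argument is a one-step decomposition plus two one-line stability bounds. The only bookkeeping point worth flagging is the choice of test class over which to infimise; the derivation naturally uses $q \in V_{h,\Div}$, but since $V_{h,\Div} \subset V_h$, any enlargement of the test class consistent with the stated infimum over $V_h$ is automatic.
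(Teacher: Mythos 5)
Your core argument is the paper's own: the paper rewrites the error as $\Pi_{\mathrm{init}}^h\left[v-q-\Pi_{\Div}(v-q)\right]$ and applies the stability of $\Pi_{\mathrm{init}}^h$ plus the $L^2$-contractivity of $\Pi_{\Div}$, which after expanding is exactly your two-term split, and your constant $(C+1)\le 2C$ is fine since $C\ge 1$ for any nonzero projection. The one place your write-up goes wrong is the closing remark about the test class. As you correctly observe, the identity $\Pi_{\mathrm{init}}^h v-\Pi_{\Div}v=\Pi_{\mathrm{init}}^h(v-q)-\Pi_{\Div}(v-q)$ needs $\Pi_{\mathrm{init}}^h q=q$ \emph{and} $\Pi_{\Div}q=q$, i.e. $q\in V_{h,\Div}$; hence what you have actually proved is the bound with $\inf_{q\in V_{h,\Div}}$. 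Passing from there to $\inf_{q\in V_h}$ is not ``automatic'' --- it goes the wrong way: enlarging the class over which one infimises makes the infimum smaller and therefore the claimed inequality strictly stronger, not weaker. Indeed, with the infimum over all of $V_h$ the stated inequality can fail: take $\Pi_{\mathrm{init}}^h$ to be the $L^2$-orthogonal projection onto $V_h$ and $v\in V_h\setminus V_{h,\Div}$, so that the right-hand side vanishes while the left-hand side equals $\norm{v-\Pi_{\Div}v}_{L^2}>0$. To be fair, the paper's own proof carries the same implicit restriction (its displayed identity also requires $q$ to be fixed by $\Pi_{\Div}$), and in the subsequent corollary the infimum is only ever evaluated at divergence-preserving interpolants of a divergence-free $u_0$, so the version of the lemma with $\inf_{q\in V_{h,\Div}}$ is what is both provable and needed. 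You should state and use that version rather than appeal to an enlargement of the test class that reverses the direction of the inequality.
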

\begin{proof}
Since $\Pi_{\Div}$ and $\Pi_{\mathrm{init}}^h $ are linear projections, for arbitrary $q \in V_{h,\Div}$,
\begin{align*}
\norm{\Pi_{\mathrm{init}}^h v - \Pi_{\Div} v}_{L^2_x}  &= \norm{ \Pi_{\mathrm{init}}^h\left[v - q -  \Pi_{\Div}(v - q) \right]}_{L^2_x} \\
&\leq C \norm{v - q -  \Pi_{\Div}(v - q) }_{L^2_x} \\
&\leq 2C \norm{v - q}_{L^2_x}.
\end{align*}
The assertions follows by taking the infimum over~$q \in V_{h,\Div}$.
\end{proof}

\begin{corollary}
Let Assumption~\ref{ass:VelocityProj} be satisfied, $u_0 \in L^2_\omega W^{1,2}_{0,\Div}$ and $u_0^h = \Pi_{\mathrm{init}}^h u_0$ for some~$\Pi_{\mathrm{init}}^h$ given by Lemma~\ref{lem:Initproj}. Then
\begin{align*}
C_{\mathrm{init}} \lesssim h^2 \mathbb{E} \left[ \norm{\nabla u_0}_{L^2_x}^2 \right].
\end{align*}
\end{corollary}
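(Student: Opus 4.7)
The plan is to peel $C_{\mathrm{init}}$ apart into a pure best-approximation error in $V_h$ and then invoke a standard first-order interpolation estimate. Start from the definition
\begin{align*}
C_{\mathrm{init}} = \mathbb{E}\left[\norm{\Pi_{\Div}(u_0 - \Pi_{\mathrm{init}}^h u_0)}_{L^2}^2\right].
\end{align*}
Since $\Pi_{\Div}$ is the $L^2$-orthogonal projection onto $V_{h,\Div}$, it satisfies $\Pi_{\Div}^2 = \Pi_{\Div}$ and $\norm{\Pi_{\Div}}_{\mathcal{L}(L^2;L^2)} \leq 1$. Using the idempotency,
\begin{align*}
\Pi_{\Div} u_0 - \Pi_{\Div}\Pi_{\mathrm{init}}^h u_0 = \Pi_{\Div}\big(\Pi_{\Div} u_0 - \Pi_{\mathrm{init}}^h u_0\big),
\end{align*}
and therefore by contractivity
\begin{align*}
\norm{\Pi_{\Div}(u_0 - \Pi_{\mathrm{init}}^h u_0)}_{L^2} \leq \norm{\Pi_{\Div} u_0 - \Pi_{\mathrm{init}}^h u_0}_{L^2}.
\end{align*}

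Next I would apply Lemma~\ref{lem:Initproj} with $v = u_0$ to reduce the right-hand side to the best-approximation error of $u_0$ in $V_h$, namely
\begin{align*}
\norm{\Pi_{\Div} u_0 - \Pi_{\mathrm{init}}^h u_0}_{L^2} \lesssim \inf_{q \in V_h} \norm{u_0 - q}_{L^2}.
\end{align*}
To bound this infimum, choose $q = \Theta_{\Div}^h u_0$ (the projection from Assumption~\ref{ass:VelocityProj}). The goal is the first-order estimate $\norm{u_0 - \Theta_{\Div}^h u_0}_{L^2} \lesssim h \norm{\nabla u_0}_{L^2}$. Squaring, taking expectation, and pathwise-applying the previous chain then yields the claimed bound $C_{\mathrm{init}} \lesssim h^2 \mathbb{E}[\norm{\nabla u_0}_{L^2}^2]$.

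The main obstacle is that the linear estimate explicitly stated in Lemma~\ref{lem:StabilityProj}\ref{it:linearEstimate} is the $W^{2,2}$-$h^2$ version, whereas we only have $W^{1,2}$ regularity on $u_0$ and need the $h^1$ version. This half-order-lower estimate is not stated explicitly in the excerpt but follows element-wise from the local $W^{1,1}$-stability in Assumption~\ref{ass:VelocityProj}(3): since constants are preserved by $\Theta_{\Div}^h$ on each patch $\omega_K$, for any constant $c_K$,
\begin{align*}
\dashint_K \abs{u_0 - \Theta_{\Div}^h u_0} \dx \leq \dashint_K \abs{u_0 - c_K} \dx + \dashint_K \abs{\Theta_{\Div}^h(u_0 - c_K)} \dx \lesssim \dashint_{\omega_K} \abs{u_0 - c_K}\dx + h_K \dashint_{\omega_K} \abs{\nabla u_0}\dx,
\end{align*}
and the Poincaré inequality on $\omega_K$ together with an inverse estimate (or the fact that $\Theta_{\Div}^h u_0$ is polynomial of fixed degree on $K$, so all $L^q$-norms on $K$ are equivalent up to $h_K$-factors) upgrades this local $L^1$-bound into $\norm{u_0 - \Theta_{\Div}^h u_0}_{L^2(K)} \lesssim h_K \norm{\nabla u_0}_{L^2(\omega_K)}$. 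Summing over $K \in \mathcal{T}_h$ and using shape regularity (bounded patch overlap) concludes the interpolation estimate. Apart from this technical point, every remaining step is routine: square, take expectation, and close the argument.
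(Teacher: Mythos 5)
Your proposal is correct and is essentially the argument the paper intends: the corollary is stated without proof, being an immediate consequence of Lemma~\ref{lem:Initproj} (contractivity and idempotency of $\Pi_{\Div}$, then the best-approximation bound) combined with a first-order interpolation estimate obtained by choosing $q=\Theta_{\Div}^h u_0$. You are right that only the $W^{2,2}$-to-$h^2$ estimate is stated in Lemma~\ref{lem:StabilityProj}, and your derivation of the missing $\norm{u_0-\Theta_{\Div}^h u_0}_{L^2}\lesssim h\norm{\nabla u_0}_{L^2}$ from the local $W^{1,1}$-stability, preservation of polynomials, Poincar\'e on patches and norm equivalence on $\mathscr{P}_r(K)$ is exactly the standard argument from the cited reference of Belenki, Berselli, Diening and R\r{u}\v{z}i\v{c}ka.
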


\subsection{Noise coefficient}
Convergence of the velocity can only be guaranteed if the approximated noise coefficient~$G_n$ converges to the limiting noise coefficient~$G$. We quantify the convergence in the following sense.

\begin{assumption}[Asymptotic data exactness]
 \label{ass:DataLimit}
 We assume there exists $C >0$ (independent of $\tau$ and equivalently of $N$) such that for all progressively measurable $v \in L^2_\omega C_t L^2_x$ it holds
\begin{enumerate}
\item (averaged Lipschitz continuity)
\begin{align} \label{eq:AveragedLip}
\begin{aligned}
&\mathbb{E}\left[ \sum_{n=1}^N \int a_n^2(t) \norm{G\big(t, v(t) \big) - G\big(t, \mean{v}_{(n-2)\vee 0} \big)}_{L_2(\mathfrak{U};L^2_x)}^2 \dd t \right] \\
&\hspace{3em} \leq C \mathbb{E}\left[ \sum_{n=1}^N \int a_n^2(t) \norm{v(t) - \mean{v}_{(n-2)\vee 0}}_{L^2_x}^2 \dd t \right],
\end{aligned}
\end{align}
\item (averaged time regularity)
\begin{align}\label{eq:AveragedTime}
\begin{aligned}
&\mathbb{E}\left[ \sum_{n=1}^N \int a_n^2(t) \norm{G\big(t, \mean{v}_{(n-2)\vee 0} \big) - G_n\big(\mean{v}_{(n-2)\vee 0} \big)}_{L_2(\mathfrak{U};L^2_x)}^2 \dd t \right] \\
&\hspace{3em} \leq C \tau \left( 1 + \norm{v}_{L^2_\omega L^\infty_t L^2_x}^2 \right).
\end{aligned}
\end{align}
\end{enumerate}
\end{assumption}

\begin{lemma}
Let Assumption~\ref{ass:DataApprox} and Assumption~\ref{ass:DataLimit} be satisfied. Then it holds
\begin{align*}
C_{G}  &\lesssim \tau \left( \norm{u}_{B^{1/2}_{2,\infty} L^2_\omega L^2_x}^2 + \norm{u}_{L^2_\omega L^\infty_t L^2_x}^2 + \norm{u_0}_{L^2_\omega L^2_x}^2  + C_{\mathrm{init}} \right) + \mathbb{E}\left[\sum_{n=1}^N \tau \norm{e_n}_{L^2_x}^2  \right].
\end{align*}
\end{lemma}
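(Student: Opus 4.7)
The starting point is a triangle-inequality decomposition of the integrand defining $C_G$:
\begin{align*}
G\big(t, u(t)\big) - G_n(u_{(n-2)\vee 0}) = \mathrm{A}_n(t) + \mathrm{B}_n(t) + \mathrm{C}_n,
\end{align*}
where $\mathrm{A}_n(t) := G(t, u(t)) - G(t, \mean{u}_{(n-2)\vee 0})$, $\mathrm{B}_n(t) := G(t, \mean{u}_{(n-2)\vee 0}) - G_n(\mean{u}_{(n-2)\vee 0})$, and $\mathrm{C}_n := G_n(\mean{u}_{(n-2)\vee 0}) - G_n(u_{(n-2)\vee 0})$. Squaring and using $\norm{a+b+c}^2 \leq 3 (\norm{a}^2 + \norm{b}^2 + \norm{c}^2)$ splits $C_G$ into three contributions which I would bound separately.

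For the $\mathrm{A}_n$-part I would apply the averaged Lipschitz continuity~\eqref{eq:AveragedLip} with $v = u$, reducing the task to the estimate
\begin{align*}
\mathbb{E}\left[\sum_{n=1}^N \int a_n^2(t) \norm{u(t) - \mean{u}_{(n-2)\vee 0}}_{L^2}^2 \dd t \right] \lesssim \tau \norm{u}_{B^{1/2}_{2,\infty} L^2_\omega L^2_x}^2.
\end{align*}
Jensen's inequality applied to the inner time-average gives $\norm{u(t) - \mean{u}_{(n-2)\vee 0}}_{L^2}^2 \leq \dashint_{J_{(n-2)\vee 0}} \norm{u(t) - u(s)}_{L^2}^2 \dd s$. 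Now I would exchange sum and integration by Fubini and observe that for each pair $(t,s)$ the support constraints $t \in J_{n-1} \cup J_n$ and $s \in J_{(n-2)\vee 0}$ pin down $n$ up to a bounded number of values; combined with $|J_{(n-2)\vee 0}|^{-1} \eqsim \tau^{-1}$ this yields $\sum_n a_n^2(t)\, |J_{(n-2)\vee 0}|^{-1} \mathbf{1}_{J_{(n-2)\vee 0}}(s) \lesssim \tau^{-1} \mathbf{1}_{\{|t-s|\leq 3\tau\}}$. The change of variable $h = t - s$ together with the definition of the Besov seminorm, which gives $\int_{I\cap I-\{h\}} \mathbb{E}\norm{u(t+h)-u(t)}_{L^2}^2 \dd t \leq h \seminorm{u}_{B^{1/2}_{2,\infty} L^2_\omega L^2_x}^2$, completes the argument via $\int_{0}^{3\tau} h\, \dd h \simeq \tau^2$.

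For the $\mathrm{B}_n$-part I would directly invoke~\eqref{eq:AveragedTime} with $v = u$, gaining $\lesssim \tau(1+\norm{u}_{L^2_\omega L^\infty_t L^2_x}^2)$. For the $\mathrm{C}_n$-part, both $\mean{u}_{(n-2)\vee 0}$ and $u_{(n-2)\vee 0}$ are $\mathcal{F}^N_{(n-2)\vee 0}$-measurable, so Lipschitz continuity~\eqref{ass:Lipschitz} applies, and $\int a_n^2(t) \dd t \leq \tau$ extracts the factor $\tau$. After an index shift, the sum becomes $\leq 2\tau\, \mathbb{E}[\norm{e_0}_{L^2}^2] + \tau \sum_{n=1}^N \mathbb{E}[\norm{e_n}_{L^2}^2]$. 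I would bound the initial term by inserting $\pm u_0$ and $\pm \Pi_{\Div} u_0$ in $e_0 = \mean{u}_0 - u_0^h$ and using $u_0^h \in V_{h,\Div}$ (so $\Pi_{\Div} u_0^h = u_0^h$), together with the trivial bounds $\norm{\mean{u}_0}_{L^2} \leq \norm{u}_{L^\infty_t L^2}$ and $\norm{u_0 - \Pi_{\Div} u_0}_{L^2} \leq \norm{u_0}_{L^2}$, and the identity $C_{\mathrm{init}} = \mathbb{E}[\norm{\Pi_{\Div}(u_0 - u_0^h)}_{L^2}^2]$, yielding $\mathbb{E}[\norm{e_0}_{L^2}^2] \lesssim \norm{u}_{L^2_\omega L^\infty_t L^2_x}^2 + \norm{u_0}_{L^2_\omega L^2_x}^2 + C_{\mathrm{init}}$.

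The main obstacle is the Besov-type argument for the $\mathrm{A}_n$-part: extracting the sharp scaling $\tau$ from a discrete double sum over $n$-dependent supports hinges on the combinatorial observation that these supports overlap only finitely often at each pair $(t,s)$, so that the averaging factor $1/|J_{(n-2)\vee 0}| \eqsim \tau^{-1}$ cancels appropriately against the Lebesgue measure of $\{|t-s|\leq 3\tau\}$; only this precise balance produces the Besov-compatible integrand $\int h\,\dd h \simeq \tau^2$ and hence the clean scaling $\tau\norm{u}_{B^{1/2}_{2,\infty}}^2$.
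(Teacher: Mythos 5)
Your proposal is correct and follows essentially the same route as the paper: the identical three-term splitting of $C_G$ (Lipschitz in the third argument of $G$, time-discretization of $G$, and Lipschitz of $G_n$), with \eqref{eq:AveragedLip}, \eqref{eq:AveragedTime} and \eqref{ass:Lipschitz} applied to the respective pieces and the remainder reduced to $\mathbb{E}[\sum_n \tau\norm{e_n}_{L^2}^2]$ plus initial-data terms. The only difference is that you spell out the temporal-oscillation/Besov estimate for the first piece in detail, whereas the paper delegates it to the argument of Lemma~\ref{lem:RegulartiyV} (estimate \eqref{eq:EstimateTemporalOscillation}); your version of that step is sound.
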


\begin{proof}
Recall that~$G$ is a function in the variables~$(\omega,t,v) \in \Omega \times I \times \mathbb{R}^n$. Additionally, we compose $G$ with a stochastic process in its third variable. Measuring closeness of the data approximation jointly with respect to the variables is a hard task. Instead, we split the error into an approximation error with respect to~$t$ and~$v$ separately,
\begin{align*}
C_{G} &\lesssim \mathbb{E}\left[\sum_{n=1}^N \int a_n^2(t) \norm{G\big(t, u(t) \big)-G\big(t, \mean{u}_{(n-2)\vee 0} \big) }_{L_2(\mathfrak{U};L^2_x)}^2 \dd t \right]  \\
&\quad + \mathbb{E}\left[\sum_{n=1}^N \int a_n^2(t) \norm{G\big(t, \mean{u}_{(n-2)\vee 0} \big) - G_n(\mean{u}_{(n-2)\vee 0} )}_{L_2(\mathfrak{U};L^2_x)}^2 \dd t \right] \\
&\quad + \mathbb{E}\left[\sum_{n=1}^N \tau \norm{G_n(\mean{u}_{(n-2)\vee 0} ) - G_n(u_{(n-2)\vee 0} )}_{L_2(\mathfrak{U};L^2_x)}^2  \right] \\
&=: C_G^1 + C_G^2 + C_G^3.
\end{align*}

The first and last term measure closeness of $G$ respectively $G_n$ with respect to the third variable. The second term quantifies the distance of $G_n(\cdot) $ and $\int a_n(t) G(t, \cdot) \dd t$ in the second variable.

Due to~\eqref{eq:AveragedLip} and a bound on the temporal oscillations as presented in~\eqref{eq:EstimateTemporalOscillation} we arrive at
\begin{align*}
C_G^1 &\lesssim \mathbb{E}\left[ \sum_{n=1}^N \int a_n^2(t) \norm{u(t) - \mean{u}_{(n-2)\vee 0}}_{L^2_x}^2 \dd t \right]\\
&\lesssim \tau \left( \norm{u}_{B^{1/2}_{2,\infty} L^2_\omega L^2_x}^2 + \norm{u}_{L^2_\omega L^\infty_t L^2_x}^2 + \norm{u_0}_{L^2_\omega L^2_x}^2  \right).
\end{align*}

Notice that $\mean{u}_{(n-2)\vee 0}$ is $\mathcal{F}_{(n-2) \vee 0}^N$-measurable. Thus, we can apply~\eqref{ass:Lipschitz} to obtain
\begin{align*}
C_G^3 &\lesssim \mathbb{E}\left[\sum_{n=1}^N \tau \norm{\mean{u}_{(n-2)\vee 0}  -u_{(n-2)\vee 0} }_{L_2(\mathfrak{U};L^2_x)}^2  \right] \\
&\lesssim \tau\left( \norm{u_0}_{L^2_\omega L^2_x}^2 +  C_{\mathrm{init}} \right) + \mathbb{E}\left[\sum_{n=1}^N \tau \norm{\mean{u}_{n}  -u_{n} }_{L_2(\mathfrak{U};L^2_x)}^2  \right].
\end{align*}

Using~\eqref{eq:AveragedTime} we find
\begin{align*}
C_G^2 \lesssim \tau \left( 1 + \norm{u}_{L^2_\omega L^\infty_t L^2_x}^2 \right).
\end{align*}
Combining the estimates finishes the proof.
\end{proof}

\subsubsection{Example}
In this example we present a sufficient condition for the noise coefficient~$G$ and a possible choice for $G_n$ such that the Assumptions~\ref{ass:DataApprox} and~\ref{ass:DataLimit} are satisfied.

\begin{example} \label{ex:GandGn}
 Let $G$ satisfy~\eqref{ex:Sublinear},~\eqref{ex:Lipschitz} and additionally, for all $\omega \in \Omega$, $t,s \in I$ and $v \in L^2_x$,
\begin{align}\label{ex:timeReg}
\norm{G(\omega,t, v) - G(\omega,s, v)}_{L_2(\mathfrak{U};L^2_x)}^2 \lesssim \abs{t-s} \left( \norm{v}_{L^2_x}^2 + 1 \right).
\end{align}
Next, we construct a time discrete approximation of $G$. Let $\omega \in \Omega$, $v \in L^2_x$ and set
\begin{align*}
n = 1,2: \quad G_n(\omega,v) &:= 0, \\
n \geq 3: \quad G_n(\omega,v) &:= \dashint_{J_{n-2}} G(\omega,t,v) \dd t.
\end{align*}
\end{example}

\begin{lemma}
$G$ and $G_n$ satisfy Assumption~\ref{ass:DataApprox} and Assumption~\ref{ass:DataLimit}.
\end{lemma}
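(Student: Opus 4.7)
The plan is to check each item in Assumption~\ref{ass:DataApprox} and Assumption~\ref{ass:DataLimit} in turn, exploiting the time-averaged structure of $G_n$ for $n\geq 3$ and treating the boundary indices $n=1,2$ separately.

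For Assumption~\ref{ass:DataApprox}, measurability is immediate: for $n\geq 3$, $G_n(v)$ is a Bochner mean of $G(\cdot,v)$ over $J_{n-2}=[t_{n-2}-\tau/2,t_{n-2}+\tau/2]$, whose right endpoint is exactly the time defining $\mathcal{F}_{n-2}^N = \mathcal{F}_{t_{n-2}+\tau/2}$, so the progressive measurability of $G$ transfers to $\mathcal{F}_{(n-2)\vee 0}^N$-measurability of $G_n$; for $n=1,2$ the map $G_n\equiv 0$ is trivially $\mathcal{F}_0^N$-measurable. To obtain the sublinear growth~\eqref{ass:Sublinear} and the Lipschitz bound~\eqref{ass:Lipschitz}, I would first apply Jensen's inequality to move the Hilbert--Schmidt norm inside the mean integral defining $G_n$, and then apply the pointwise estimates~\eqref{ex:Sublinear} and~\eqref{ex:Lipschitz} at each $t\in J_{n-2}$; the expectation then commutes with the $t$-mean and the claimed inequalities follow.

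For Assumption~\ref{ass:DataLimit}, the averaged Lipschitz inequality~\eqref{eq:AveragedLip} is an immediate consequence of~\eqref{ex:Lipschitz} applied pointwise in $(t,\omega)$ and summed. The core of the proof is the averaged time regularity~\eqref{eq:AveragedTime}. For $n\geq 3$ I would rewrite
\[
G\bigl(t,\mean{v}_{(n-2)\vee 0}\bigr) - G_n\bigl(\mean{v}_{(n-2)\vee 0}\bigr) = \dashint_{J_{n-2}} \bigl( G(t,\mean{v}_{(n-2)\vee 0}) - G(s,\mean{v}_{(n-2)\vee 0}) \bigr)\ds,
\]
apply Jensen's inequality to pull the Hilbert--Schmidt norm inside the $s$-integral, and then use the pointwise time regularity~\eqref{ex:timeReg} to obtain an integrand bounded by $|t-s|\bigl(1+\norm{\mean{v}_{(n-2)\vee 0}}_{L^2}^2\bigr)$. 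The weights $a_n$ are supported in $J_{n-1}\cup J_n$ whereas $s\in J_{n-2}$, so $|t-s|\lesssim\tau$ uniformly; combined with $|a_n|\leq 1$ and $|\operatorname{supp}(a_n)|\leq\tau$, the $t$-integral contributes at most $\tau^2\bigl(1+\norm{\mean{v}_{(n-2)\vee 0}}_{L^2}^2\bigr)$. Summing in $n$, using $\sum_n \tau \norm{\mean{v}_{(n-2)\vee 0}}_{L^2}^2 \lesssim T\norm{v}_{L^\infty_t L^2_x}^2$, and taking expectations yields the desired right-hand side $\tau\bigl(1+\norm{v}_{L^2_\omega L^\infty_t L^2_x}^2\bigr)$.

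The boundary indices $n=1,2$ provide the only minor complication: since $G_n\equiv 0$ there, the corresponding summands of~\eqref{eq:AveragedTime} reduce to $\int a_n^2(t)\norm{G(t,\mean{v}_0)}_{L_2(\mathfrak{U};L^2)}^2\dt$, which by~\eqref{ex:Sublinear} and $|a_n|\leq 1$ is controlled by $\tau(1+\norm{\mean{v}_0}_{L^2}^2)$, again an $O(\tau)$ contribution. I do not anticipate any genuine obstacle here; the main care is to track $|t-s|\lesssim\tau$ between the neighbouring intervals $J_{n-2}$ and $J_{n-1}\cup J_n$, and to verify the filtration bookkeeping $\mathcal{F}_{t_{n-2}+\tau/2}=\mathcal{F}_{n-2}^N$ so that the intermediate quantity $G_n(\mean{v}_{(n-2)\vee 0})$ is indeed $\mathcal{F}_{(n-2)\vee 0}^N$-measurable.
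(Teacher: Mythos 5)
Your proposal is correct and follows essentially the same route as the paper: measurability via progressive measurability of $G$ and the identification $\mathcal{F}^N_{n-2}=\mathcal{F}_{t_{n-2}+\tau/2}$, Jensen's inequality plus the pointwise bounds \eqref{ex:Sublinear}--\eqref{ex:Lipschitz} for Assumption~\ref{ass:DataApprox}, and the split into the boundary indices $n=1,2$ and the $n\geq 3$ terms combined with Jensen and \eqref{ex:timeReg} (with $\abs{t-s}\lesssim\tau$ between $J_{n-2}$ and $\operatorname{supp}(a_n)$) for \eqref{eq:AveragedTime}. The only nitpick is that $\abs{\operatorname{supp}(a_n)}=2\tau$ rather than $\leq\tau$, which is harmless up to constants.
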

\begin{proof}
Ad Assumption~\ref{ass:DataApprox}: 
We restrict ourselves to the case $n \geq 3$ since the verification for $n = 1,2$ is trivial. 

Clearly, $G_n$ is $\mathcal{F}_{n-2}^N$-measurable as $G$ is progressively measurable with respect to~$(\mathcal{F}_t)$.

 Let $v,w \in L^2_\omega L^2_x$. For fixed $\omega \in \Omega$ we find, using Jensen's inequality and~\eqref{ex:Lipschitz},
\begin{align*}
&\norm{G_n\big(\omega, v(\omega) \big) - G_n\big(\omega, w(\omega) \big)}_{L_2(\mathfrak{U};L^2_x)}^2 \\
&\hspace{3em} \leq \dashint_{J_{n-2}} \norm{G\big(\omega,t, v(\omega)\big) - G\big(\omega,t, w(\omega)\big)}_{L_2(\mathfrak{U};L^2_x)}^2 \dd t\\
&\hspace{3em} \lesssim  \norm{v(\omega) -  w(\omega)}_{L^2_x}^2 .
\end{align*}
The Lipschitz estimate~\eqref{ass:Lipschitz} follows by applying expectation. In particular, it is satisfied for $\mathcal{F}_{n-2}^N$-measurable $v$ and $w$.

The sublinear growth estimate~\eqref{ass:Sublinear} follows analogously to the Lipschitz estimate.

Ad Assumption~\ref{ass:DataLimit}: The averaged (in probability and time) Lipschitz continuity~\eqref{eq:AveragedLip} follows immediately from the pointwise Lipschitz continuity~\eqref{ex:Lipschitz}.

Let $v \in L^2_\omega C_t L^2_x$. Then
\begin{align*}
&\mathbb{E}\left[ \sum_{n=1}^N \int a_n^2(t) \norm{G\big(t, \mean{v}_{(n-2)\vee 0} \big) - G_n\big(\mean{v}_{(n-2)\vee 0} \big)}_{L_2(\mathfrak{U};L^2_x)}^2 \dd t \right] \\
&\hspace{2em} = \mathbb{E}\left[ \sum_{n=1}^2 \int a_n^2(t) \norm{G\big(t, v(0) \big)}_{L_2(\mathfrak{U};L^2_x)}^2 \dd t \right] \\
&\hspace{3em} + \mathbb{E}\left[ \sum_{n=3}^N \int a_n^2(t) \norm{G\big(t, \mean{v}_{n-2} \big) - G_n\big(\mean{v}_{n-2} \big)}_{L_2(\mathfrak{U};L^2_x)}^2 \dd t \right] \\
&\hspace{2em} =: \mathfrak{H}_1 + \mathfrak{H}_2.
\end{align*}
Using~\eqref{ex:Sublinear} and the continuity of $v$
\begin{align*}
\mathfrak{H}_1 \lesssim \mathbb{E}\left[ \sum_{n=1}^2 \int a_n^2(t) \left( \norm{v(0)}_{L^2_x}^2 + 1\right) \dd t \right] \lesssim \tau \left( 1 + \norm{v}_{L^2_\omega L^\infty_t L^2_x}^2 \right).
\end{align*}
By Jensen's inequality and~\eqref{ex:timeReg}
\begin{align*}
\mathfrak{H}_2 &\leq  \mathbb{E}\left[ \sum_{n=3}^N \int a_n^2(t) \dashint_{J_{n-2}} \norm{G\big(t, \mean{v}_{n-2} \big) - G\big(s, \mean{v}_{n-2} \big)}_{L_2(\mathfrak{U};L^2_x)}^2 \dd s \dd t \right] \\
&\lesssim \mathbb{E}\left[ \sum_{n=3}^N \int a_n^2(t) \dashint_{J_{n-2}} \abs{t-s} \left( \norm{\mean{v}_{n-2}}_{L^2_x}^2 + 1 \right) \dd s \dd t \right].
\end{align*}
Notice that
\begin{align*}
\mathbb{E}\left[ \sum_{n=3}^N \int a_n^2(t) \dashint_{J_{n-2}} \abs{t-s} \dd s \dd t \right] \lesssim \tau.
\end{align*}
Moreover, another application of Jensen's inequality shows,
\begin{align*}
&\mathbb{E}\left[ \sum_{n=3}^N \int a_n^2(t) \dashint_{J_{n-2}} \abs{t-s} \norm{\mean{v}_{n-2}}_{L^2_x}^2 \dd s \dd t \right] \\
&\hspace{3em}\leq \tau \mathbb{E}\left[ \sum_{n=3}^N \int a_n^2(t) \dashint_{J_{n-2}}  \norm{v(s)}_{L^2_x}^2 \dd s \dd t \right] \\
&\hspace{3em}\lesssim \tau \norm{v}_{L^2_\omega L^\infty_t L^2_x}^2.
\end{align*}
Collecting the estimates for~$\mathfrak{H}_1$ and~$\mathfrak{H}_2$ allows us to conclude~\eqref{eq:AveragedTime}.

\end{proof}

\begin{remark}
We want to stress that~$G_n$ additionally satisfies Assumption~\ref{ass:StrongerCondition}.
\end{remark}

\subsection{Projection error}
The projection error is not an obstacle in the derivation of convergence rates and can be treated easily.
\begin{lemma} \label{lem:ProjectionErrorBound}
It holds
\begin{align} \label{eq:estimateProjection01}
C_{L^\infty} \lesssim h^2 \norm{\nabla u}_{L^2_\omega L^\infty_t L^2_x}^2
\end{align}
and
\begin{align}\label{eq:estimateProjection02}
C_{B^{1/2}_{2,\infty}} \lesssim h^2 \seminorm{\nabla u}_{B^{1/2}_{2,\infty} L^2_\omega L^2_x}^2.
\end{align}
\end{lemma}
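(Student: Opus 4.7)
The plan is to treat both bounds by the same two-step recipe. \emph{Step one:} since $\Pi_{\Div}$ is the $L^2$-orthogonal projection onto $V_{h,\Div}$, its error is bounded by the error of any competitor in $V_{h,\Div}$. I take the competitor to be $\Theta_{\Div}^h w$ for a suitable divergence-free input $w \in W^{1,2}_{0,\Div}$. Assumption~\ref{ass:VelocityProj}(ii) places $\Theta_{\Div}^h w$ in $V_h$; Assumption~\ref{ass:VelocityProj}(i), together with the fact that $Q_h \subset Y_h$ and $\Div w = 0$, shows $(\Div \Theta_{\Div}^h w,q) = 0$ for all $q \in Q_h$. Hence $\Theta_{\Div}^h w \in V_{h,\Div}$, and therefore $\norm{(I - \Pi_{\Div}) w}_{L^2} \leq \norm{(I - \Theta_{\Div}^h) w}_{L^2}$. \emph{Step two:} I need the first-order approximation estimate $\norm{v - \Theta_{\Div}^h v}_{L^2_x} \lesssim h \norm{\nabla v}_{L^2_x}$ for all $v \in W^{1,2}_x$. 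This is not explicitly in Lemma~\ref{lem:StabilityProj}, but it follows element-wise from Assumption~\ref{ass:VelocityProj}(iii), the inclusion $\mathscr{P}_1(\mathcal{T}_h) \subset X_h$ (so $\Theta_{\Div}^h$ reproduces constants, at least on the union of patches), a Bramble--Hilbert/Poincar\'e argument, and the summation provided by quasi-uniformity~\eqref{eq:quasi-uniform}.

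For~\eqref{eq:estimateProjection01} I plug $w = \mean{u}_n$ into the recipe. Note that $\mean{u}_n \in W^{1,2}_{0,\Div}$ $\mathbb{P}$-a.s., since incompressibility and zero boundary conditions pass through the time average. Combining the two steps gives
\begin{align*}
\norm{\mean{u}_n - \Pi_{\Div} \mean{u}_n}_{L^2} \lesssim h \norm{\nabla \mean{u}_n}_{L^2} \leq h \sup_{t \in I} \norm{\nabla u(t)}_{L^2},
\end{align*}
using $\nabla \mean{u}_n = \mean{\nabla u}_n$ and Jensen's inequality. Squaring, maximizing over $n$, and taking expectation produces~\eqref{eq:estimateProjection01}.

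For~\eqref{eq:estimateProjection02} I plug $w = \mean{u}_n - \mean{u}_{n-k}$ into the recipe (still in $W^{1,2}_{0,\Div}$). The key algebraic identity is the translation $J_n = J_{n-k} + k\tau$, which yields
\begin{align*}
\mean{u}_n - \mean{u}_{n-k} = \dashint_{J_{n-k}} \bigl( u(s + k\tau) - u(s) \bigr) \ds.
\end{align*}
Applying the gradient and Jensen's inequality,
\begin{align*}
\norm{\nabla(\mean{u}_n - \mean{u}_{n-k})}_{L^2}^2 \leq \dashint_{J_{n-k}} \norm{\tau_{k\tau} \nabla u(s)}_{L^2}^2 \ds.
\end{align*}
Summing over $n \in \{k,\ldots,N\}$ and invoking the essentially non-overlapping coverage of $I\cap (I - \{k\tau\})$ by $\{J_{n-k}\}_{n=k}^N$ collapses $\sum_n \dashint_{J_{n-k}}$ to $\tau^{-1}\int_{I \cap (I - \{k\tau\})}$. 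Dividing by $k$ gives precisely $(k\tau)^{-1} \norm{\tau_{k\tau} \nabla u}_{L^2(I\cap(I - \{k\tau\}); L^2_\omega L^2_x)}^2$, which is bounded by $\seminorm{\nabla u}_{B^{1/2}_{2,\infty}L^2_\omega L^2_x}^2$ by definition~\eqref{def:Besov01}. Taking the maximum over $k$ and multiplying by the $h^2$ from Step two finishes the proof.

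The main (mild) obstacle is the first-order $L^2$ Bramble--Hilbert bound for $\Theta_{\Div}^h$; once this is in hand the rest is bookkeeping (telescoping the time averages, coverage of $I$ by the $J_n$'s, and the harmless asymmetry of $J_0$, which contributes only a $O(\tau)$ boundary term absorbed by the $\lesssim$).
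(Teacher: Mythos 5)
Your proposal is correct and follows essentially the same route as the paper: bound the $L^2$-orthogonal projection error by the error of the competitor $\Theta_{\Div}^h$ (which lies in $V_{h,\Div}$ for divergence-free inputs), apply the first-order interpolation estimate, and use Jensen plus the coverage of $I$ by the intervals $J_n$ to recover the $B^{1/2}_{2,\infty}$ seminorm. If anything you are more explicit than the paper, which silently invokes the first-order bound $\norm{v-\Theta_{\Div}^h v}_{L^2_x}\lesssim h\norm{\nabla v}_{L^2_x}$ while only stating the second-order version in Lemma~\ref{lem:StabilityProj}; your Bramble--Hilbert remark correctly fills that small gap.
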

\begin{proof}
Since $\Pi_{\Div}$ is the orthogonal projection onto $V_{h,\Div}$,~\eqref{eq:linearInterpolationEst} and Jensen's inequality, we find
\begin{align*}
\norm{\mean{u}_n - \Pi_{\Div} \mean{u}_n}_{L^2_x}^2 &\leq \norm{\mean{u}_n - \Theta_{\Div}^h \mean{u}_n}_{L^2_x}^2 \\
&\lesssim h^2 \norm{\nabla \mean{u}_n}_{L^2_x}^2 \lesssim h^2 \sup_{t \in I} \norm{\nabla u}_{L^2_x}^2.
\end{align*}
Similarly,
\begin{align*}
\norm{\mean{u}_n  - \mean{u}_{n-k} - \Pi_{\Div}(\mean{u}_n - \mean{u}_{n-k})}_{L^2_x}^2 \lesssim h^2 \dashint_{J_n} \norm{\nabla u(t) - \nabla u(t-k\tau) }_{L^2_x}^2 \dd t.
\end{align*}
The assertions~\eqref{eq:estimateProjection01} and~\eqref{eq:estimateProjection02} follow immediately.

\end{proof}

\subsection{Best approximation}
The distance measure for the best-approximation consists of two contributions, 
\begin{align*}
d^1(\eta) &=  \mathbb{E}\left[\sum_{n=1}^N \norm{\Pi_{\Div} \mean{u}_n - \eta_n}_{L^2_x}^2\right], \\
 d^2(\eta) &= \mathbb{E}\left[\sum_{n=1}^N \int a_n(t)  \norm{V\big( \varepsilon u(t) \big) - V(\varepsilon \eta_n)}_{L^2_x}^2 \dd t\right].
\end{align*}
The best-approximation optimize both contributions simultaneously. We choose specific discrete velocities in order to understand how the best-approximation behaves as a function of the discretization parameters. There are essentially two approaches
\begin{enumerate}
\item ($L^2_x$-projection) $\eta_n = \Pi_{\Div} \mean{u}_n$,
\item (local projection) $\eta_n = \Theta_{\Div}^h \mean{u}_n$.
\end{enumerate}
The $L^2_x$-projection has the clear advantage that the first contribution vanishes. However, it is by far non-trivial to understand the approximation quality of the $L^2_x$-projection for the non-linear gradient. In the context of the $p$-Laplace system, we recently proved a corresponding gradient estimate, cf.~\cite[Theorem~7]{MR4286257}. The result heavily relies on weighted gradient stability for the $L^2_x$-projection, which itself relies on the underlying structure of the finite element space and its triangulation. For more details we refer to~\cite{MR4320894} and the references therein. 

On the other hand, local projections satisfy a non-linear gradient stability as presented in~\eqref{eq:nonlinearInterpolationEst}. Therefore, the second contribution can be controlled. However, now we need to additionally estimate the first contribution.

We discuss the estimates for each choice separately.

\subsubsection{$L^2_x$-projection}
Provided that the $L^2_x$-projection satisfies a non-linear gradient stability it is straight forward to bound the best-approximation error. The difficult part is to verify the stability~\eqref{eq:GradStabL2}.

\begin{lemma} \label{lem:L2ProjEst}
Let there exists a constant $C$ such that for all $ V(\varepsilon u) \in W^{1,2}_x$ it holds
\begin{align}\label{eq:GradStabL2}
\norm{V(\varepsilon u) - V(\varepsilon \Pi_{\Div} u)}_{L^2_x} \leq C h\norm{\nabla V(\varepsilon u)}_{L^2_x}.
\end{align}
Then
\begin{align*}
C_{\mathrm{best}} \lesssim h^2 \left( \norm{\nabla V(\varepsilon u)}_{L^2_\omega L^2_t L^2_x}^2 \right) + \tau \seminorm{V(\varepsilon u)}_{B^{1/2}_{2,\infty} L^2_\omega L^2_x}^2. 
\end{align*}
\end{lemma}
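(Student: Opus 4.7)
The plan is to commit to the ansatz $\eta_n := \Pi_{\Div} \mean{u}_n$ in the infimum defining $C_{\mathrm{best}}$. This annihilates the linear contribution $d^1$ identically, since $\Pi_{\Div} \mean{u}_n - \eta_n = 0$, so that it only remains to bound the non-linear contribution $d^2$ by the right-hand side of the claim.

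For the non-linear term I would decompose pointwise in $t$
\begin{align*}
V(\varepsilon u(t)) - V(\varepsilon \Pi_{\Div} \mean{u}_n) &= \bigl[ V(\varepsilon u(t)) - V(\varepsilon \Pi_{\Div} u(t)) \bigr] \\
&\quad + \bigl[ V(\varepsilon \Pi_{\Div} u(t)) - V(\varepsilon \Pi_{\Div} \mean{u}_n) \bigr]
\end{align*}
and treat the two pieces separately. The first piece is the spatial projection error at a fixed time slice, controlled by the hypothesis~\eqref{eq:GradStabL2} applied to $u(t)$. Squaring, integrating against $a_n(t)$, summing over $n$ using $\sum_n a_n \leq 1$, and taking expectation then produce the spatial contribution $h^2 \norm{\nabla V(\varepsilon u)}_{L^2_\omega L^2_t L^2_x}^2$.

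For the second piece, the key observation is that $\Pi_{\Div}\mean{u}_n = \mean{\Pi_{\Div} u}_n$ by linearity, so the difference measures the temporal oscillation of the already discrete function $V(\varepsilon \Pi_{\Div} u(\cdot))$ on~$J_n$. I would rewrite it as $\frac{1}{\tau} \int a_n(s) \bigl[ V(\varepsilon \Pi_{\Div} u(t)) - V(\varepsilon \Pi_{\Div} u(s)) \bigr] \ds$ and invoke a stability of $\Pi_{\Div}$ with respect to $V$-distances of the form $\norm{V(\varepsilon \Pi_{\Div} v_1) - V(\varepsilon \Pi_{\Div} v_2)}_{L^2} \lesssim \norm{V(\varepsilon v_1) - V(\varepsilon v_2)}_{L^2}$ to dominate the integrand by $\norm{V(\varepsilon u(t)) - V(\varepsilon u(s))}_{L^2}$. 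Integration against $a_n(t)/\tau$, summation over $n$ and expectation then reproduce exactly the quantity $C_V$ of Lemma~\ref{lem:RegulartiyV}; applied with $\alpha = 1/2$ and $r = \infty$ this lemma gives $C_V \lesssim \tau \seminorm{V(\varepsilon u)}_{B^{1/2}_{2,\infty} L^2_\omega L^2_x}^2$, which matches the temporal contribution in the claim.

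The main obstacle is justifying the $V$-distance stability of $\Pi_{\Div}$ used in the second piece. Purely from the hypothesis~\eqref{eq:GradStabL2} -- which is only a first-order comparison between $u$ and $\Pi_{\Div} u$ -- the non-linear map $v \mapsto V(\varepsilon \Pi_{\Div} v)$ need not be stable in the relevant pseudo-distance. The transfer from the pointwise gradient stability to the required $V$-distance stability has to be carried out via the shifted $N$-function machinery in the style of Diening and Ruzicka; without such a structural input the temporal oscillation on the projected side cannot be matched to the Besov regularity of $V(\varepsilon u)$ and the $\tau$-term on the right-hand side cannot be recovered.
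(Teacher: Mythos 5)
Your choice $\eta_n = \Pi_{\Div}\mean{u}_n$ and the identification of the two error sources (spatial projection error plus temporal oscillation of $V(\varepsilon u)$) match the paper's strategy, and your first piece is handled exactly as in the paper. However, the second piece contains a genuine gap that you partly acknowledge but do not close, and the paper's proof shows that the missing stability you ask for is in fact never needed. Two separate problems arise in your treatment of $V(\varepsilon \Pi_{\Div}u(t)) - V(\varepsilon\Pi_{\Div}\mean{u}_n)$. First, the rewriting as $\frac{1}{\tau}\int a_n(s)\bigl[V(\varepsilon\Pi_{\Div}u(t)) - V(\varepsilon\Pi_{\Div}u(s))\bigr]\ds$ is not an identity: $V$ is nonlinear, so $V(\varepsilon\Pi_{\Div}\mean{u}_n) \neq \dashint_{J_n} V(\varepsilon\Pi_{\Div}u(s))\ds$, and passing the time average through $V$ requires a Jensen-type argument for the shifted $N$-function that you do not supply. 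Second, the postulated stability $\norm{V(\varepsilon\Pi_{\Div}v_1)-V(\varepsilon\Pi_{\Div}v_2)}_{L^2}\lesssim\norm{V(\varepsilon v_1)-V(\varepsilon v_2)}_{L^2}$ does not follow from the hypothesis~\eqref{eq:GradStabL2} and is not available in the paper; as you note yourself, without it the temporal term cannot be recovered along your route.

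The paper circumvents both issues by never splitting inside $V$. Using Lemma~\ref{lem:Relation} one writes $\norm{V(\varepsilon u(t)) - V(\varepsilon\Pi_{\Div}\mean{u}_n)}_{L^2}^2 \eqsim \int_{\mathcal{O}}\bigl(S(\varepsilon u(t)) - S(\varepsilon\Pi_{\Div}\mean{u}_n)\bigr):\bigl(\varepsilon u(t) - \varepsilon\Pi_{\Div}\mean{u}_n\bigr)\dx$, and the time average $\varepsilon\Pi_{\Div}\mean{u}_n = \dashint_{J_n}\varepsilon\Pi_{\Div}u(s)\ds$ is pulled out of the \emph{linear} second factor only. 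Decomposing that factor as $(\varepsilon u(t) - \varepsilon u(s)) + (\varepsilon u(s) - \varepsilon\Pi_{\Div}u(s))$ and inserting $S(\varepsilon u(s))$ in the first factor yields three terms, each of the form $(S(A)-S(B)):(C-B)$ to which the generalized Young inequality of Lemma~\ref{lem:GeneralizedYoung} applies: one produces $\delta\norm{V(\varepsilon u(t)) - V(\varepsilon\Pi_{\Div}\mean{u}_n)}_{L^2}^2$, which is absorbed; one produces $c_\delta\norm{V(\varepsilon u(t)) - V(\varepsilon u(s))}_{L^2}^2$, which is the temporal oscillation treated as in Lemma~\ref{lem:RegulartiyV} and gives the $\tau$-term; and one produces $c_\delta\norm{V(\varepsilon u(s)) - V(\varepsilon\Pi_{\Div}u(s))}_{L^2}^2$, which is exactly what hypothesis~\eqref{eq:GradStabL2} controls and gives the $h^2$-term. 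To repair your argument you should replace your pointwise decomposition of $V$-differences by this $S$--$V$ duality argument; the shifted $N$-function machinery you gesture at is precisely Lemma~\ref{lem:GeneralizedYoung}, and it enters at the level of the trilinear expression, not as a stability property of $\Pi_{\Div}$.
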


\begin{proof}
Clearly, $d^1\big( \Pi_{\Div} \mean{u}\big) = 0$. Thus, it remains to bound~$d^2$.

The relation between~$V$ and $S$, cf. Lemma~\ref{lem:Relation}, and generalized Young's inequality, cf. Lemma~\ref{lem:GeneralizedYoung}, imply
\begin{align*}
&d^2\big( \Pi_{\Div} \mean{u}\big) \\
&\eqsim \mathbb{E}\left[\sum_{n=1}^N \int a_n(t) \dashint_{J_n} \int_{\mathcal{O}} \left(S\big( \varepsilon u(t) \big) - S(\varepsilon \Pi_{\Div} \mean{u}_n) \right) :\left( \varepsilon u(t) - \varepsilon \Pi_{\Div} u(s) \right) \dd x \dd s\dd t\right] \\
&= \mathbb{E}\left[\sum_{n=1}^N \int a_n(t) \dashint_{J_n} \int_{\mathcal{O}} \left(S\big( \varepsilon u(t) \big) - S(\varepsilon \Pi_{\Div} \mean{u}_n) \right) :\left( \varepsilon u(t) - \varepsilon u(s)  \right) \dd x \dd s\dd t\right] \\
&\quad + \mathbb{E}\left[\sum_{n=1}^N \int a_n(t) \dashint_{J_n} \int_{\mathcal{O}} \left(S\big( \varepsilon u(t) \big) - S\big( \varepsilon u(s) \big) \right) :\left( \varepsilon u(s) - \varepsilon \Pi_{\Div} u(s) \right) \dd x \dd s\dd t\right] \\
&\quad+ \mathbb{E}\left[\sum_{n=1}^N \int_{J_n} \int_{\mathcal{O}} \left(S\big( \varepsilon u(s) \big) - S(\varepsilon \Pi_{\Div} \mean{u}_n) \right) :\left( \varepsilon u(s) - \varepsilon \Pi_{\Div} u(s) \right) \dd x \dd s \right] \\
&\leq \delta \mathbb{E}\left[\sum_{n=1}^N \int a_n(t)  \norm{V\big( \varepsilon u(t) \big) - V(\varepsilon \Pi_{\Div} \mean{u}_n)}_{L^2_x}^2 \dd t\right] \\
&\quad + c_\delta \mathbb{E}\left[\sum_{n=1}^N \int a_n(t) \dashint_{J_n} \norm{V\big( \varepsilon u(t) \big) - V\big( \varepsilon u(s) \big)}_{L^2_x}^2 \dd s \dd t\right] \\
&\quad + c_\delta \mathbb{E}\left[ \int_{I} \norm{V\big( \varepsilon u(s) \big) - V\big( \varepsilon \Pi_{\Div} u(s) \big)}_{L^2_x}^2 \dd s\right].
\end{align*}
The first term can be absorbed to the left hand side. The second term can be treated as in Lemma~\ref{lem:RegulartiyV}. The last term is bounded by assumption. 
\end{proof}

\subsubsection{Local projection}
Local projections are stable even in the non-linear setting, cf.~\eqref{eq:nonlinearInterpolationEst}. This immediately provides an upper bound for the non-linear gradient error. However, since~$d^1$ no longer vanishes, it needs to be considered. It leads to a severe increase of regularity requirements for the velocity as well as a coupling condition between the temporal and spatial discretization parameter. 

\begin{lemma} It holds
\begin{align*}
C_{\mathrm{best}} \lesssim h^2 \left( \frac{h^2}{\tau} \norm{\nabla^2 u}_{L^2_\omega L^\infty_t L^2_x}^2 + \norm{\nabla V(\varepsilon u)}_{L^2_\omega L^2_t L^2_x}^2 \right) + \tau \seminorm{V(\varepsilon u)}_{B^{1/2}_{2,\infty} L^2_\omega L^2_x}^2. 
\end{align*}
\end{lemma}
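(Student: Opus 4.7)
The plan is to adapt the proof of Lemma~\ref{lem:L2ProjEst} to the choice $\eta_n = \Theta_{\Div}^h \mean{u}_n$, paying the price that the first contribution $d^1$ no longer vanishes. I split $C_{\mathrm{best}} \leq d^1(\Theta_{\Div}^h \mean{u}) + d^2(\Theta_{\Div}^h \mean{u})$ and treat each term separately.

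For $d^1$, the key observation is that $\mean{u}_n$ is weakly divergence-free (as the time-average of the divergence-free field $u$) and has zero trace. Combined with Assumption~\ref{ass:VelocityProj} this yields $\Theta_{\Div}^h \mean{u}_n \in V_{h,\Div}$, hence $\Pi_{\Div}(\Theta_{\Div}^h \mean{u}_n) = \Theta_{\Div}^h \mean{u}_n$. $L^2$-contractivity of the orthogonal projector $\Pi_{\Div}$ then produces
\[
\norm{\Pi_{\Div} \mean{u}_n - \Theta_{\Div}^h \mean{u}_n}_{L^2_x} = \norm{\Pi_{\Div}(\mean{u}_n - \Theta_{\Div}^h \mean{u}_n)}_{L^2_x} \leq \norm{\mean{u}_n - \Theta_{\Div}^h \mean{u}_n}_{L^2_x}.
\]
The linear estimate~\eqref{eq:linearInterpolationEst} together with Jensen's inequality bounds this pointwise by $h^2 \sup_{t \in I}\norm{\nabla^2 u(t)}_{L^2_x}$. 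Summation over $n \in [N]$ combined with $N \lesssim T/\tau$ delivers $d^1 \lesssim (h^4/\tau)\norm{\nabla^2 u}_{L^2_\omega L^\infty_t L^2_x}^2$, matching the asserted $h^2\cdot(h^2/\tau)$ factor.

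For $d^2$ I follow the argument in the proof of Lemma~\ref{lem:L2ProjEst} verbatim with $\Pi_{\Div}$ replaced by $\Theta_{\Div}^h$. By linearity $\Theta_{\Div}^h \mean{u}_n = \dashint_{J_n} \Theta_{\Div}^h u(s)\,\ds$, and the central decomposition
\[
\varepsilon u(t) - \varepsilon \Theta_{\Div}^h u(s) = \bigl(\varepsilon u(t) - \varepsilon u(s)\bigr) + \bigl(\varepsilon u(s) - \varepsilon \Theta_{\Div}^h u(s)\bigr)
\]
combined with Lemma~\ref{lem:Relation} and generalized Young (Lemma~\ref{lem:GeneralizedYoung}) splits $d^2$ into three summands: one absorbed into the left-hand side; a temporal-oscillation contribution controlled via Lemma~\ref{lem:RegulartiyV} (with $\alpha=1/2$, $r=\infty$) by $\tau\,\seminorm{V(\varepsilon u)}_{B^{1/2}_{2,\infty} L^2_\omega L^2_x}^2$; and a spatial contribution $\mathbb{E}\int_I \norm{V(\varepsilon u(s)) - V(\varepsilon \Theta_{\Div}^h u(s))}_{L^2_x}^2 \ds$, which the non-linear estimate~\eqref{eq:nonlinearInterpolationEst} bounds by $h^2\norm{\nabla V(\varepsilon u)}_{L^2_\omega L^2_t L^2_x}^2$.

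The genuinely new ingredient compared with Lemma~\ref{lem:L2ProjEst} is the handling of $d^1$, and this is where I expect the main difficulty: it forces a second-order spatial regularity assumption $\nabla^2 u \in L^2_\omega L^\infty_t L^2_x$ that is strictly stronger than the natural regularity of weak solutions, and is precisely what produces the coupling $h^2/\tau$ between the spatial and temporal discretization parameters visible in the stated bound.
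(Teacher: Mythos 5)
Your proposal is correct and follows essentially the same route as the paper, whose proof is simply the remark that the argument combines Lemma~\ref{lem:ProjectionErrorBound} (for the $d^1$-contribution via $\Theta_{\Div}^h$, the linear estimate~\eqref{eq:linearInterpolationEst} and the count $N\eqsim T/\tau$, which is exactly where the $h^4/\tau$ coupling originates) with Lemma~\ref{lem:L2ProjEst} (for $d^2$, replacing the hypothesis~\eqref{eq:GradStabL2} by the unconditional non-linear stability~\eqref{eq:nonlinearInterpolationEst} of the local projection). Your observation that $\Theta_{\Div}^h\mean{u}_n\in V_{h,\Div}$ by the divergence preservation of Assumption~\ref{ass:VelocityProj}, so that the choice $\eta_n=\Theta_{\Div}^h\mean{u}_n$ is admissible, is the correct justification left implicit in the paper.
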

\begin{proof}
The proof follows along the lines of Lemma~\ref{lem:ProjectionErrorBound} and Lemma~\ref{lem:L2ProjEst}.
\end{proof}

\subsection{Main result}
A combination of all the previous results finally allows us to conclude the following quantified convergence rate for specific discretizations.

\begin{theorem}[Convergence rates] \label{thm:Convergence}
Let the following be true:
\begin{itemize}
\item $(V_h,Q_h)$ satisfy Assumption~\ref{ass:VelocityProj} and Assumption~\ref{ass:PressureProj},
\item $V_{h,\Div}$ contains only exactly divergence free vector fields,
\item $G$ and $G_n$ satisfy Assumption~\ref{ass:DataApprox} and Assumption~\ref{ass:DataLimit},
\item $u_0 \in L^2_\omega W^{1,2}_{x}$ and $u_0^h = \Pi_{\mathrm{init}}^h u_0$ for some~$\Pi_{\mathrm{init}}^h$ given by Lemma~\ref{lem:Initproj},
\item $u \in L^2_\omega L^\infty_t W^{1,2}_x \cap B^{1/2}_{2,\infty} L^2_\omega W^{1,2}_x$,
\item $V(\varepsilon u) \in L^2_\omega L^2_t W^{1,2}_x \cap B^{1/2}_{2,\infty} L^2_\omega L^2_x$.
\end{itemize}
Additionally, let either of the following be true:
\begin{enumerate}
\item \label{it:athm} There exists a constant $C$ such that for all $ V(\varepsilon u) \in W^{1,2}_x$ it holds
\begin{align*}
\norm{V(\varepsilon u) - V(\varepsilon \Pi_{\Div} u)}_{L^2_x} \leq C h\norm{\nabla V(\varepsilon u)}_{L^2_x},
\end{align*}
\item \label{it:bthm} or $h^2 \lesssim \tau$ and $u \in L^2_\omega L^\infty_t W^{2,2}_x$.
\end{enumerate}
 Then the velocity approximation~$u_n$ generated by~\eqref{eq:velocity001} satisfies
\begin{align}
\begin{aligned}
&\mathbb{E}\left[ \max_{n} \norm{\mean{u}_n - u_n}_{L^2_x}^2 + \sum_{n} \int a_n(t) \norm{V\big( \varepsilon u(t) \big) - V(\varepsilon u_n)}_{L^2_x}^2 \dd t \right] \\
&\hspace{3em} +\max_{k \in [N]} \frac{1}{k} \sum_{n=k}^N \mathbb{E}\left[ \norm{(\mean{u}_n - u_n) - (\mean{u}_{n-k} - u_{n-k}) }_{L^2_x}^2 \right] \lesssim h^2 + \tau.
\end{aligned}
\end{align}
\end{theorem}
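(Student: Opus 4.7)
The plan is to combine the error decompositions of Theorem~\ref{thm:ErrorEstimates} and Theorem~\ref{lem:ErrorEstimateBesov} with the quantitative bounds on each individual contribution established throughout Section~\ref{sec:Convergence}, and then to close the argument with a discrete Gronwall step.

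First, under the regularity hypotheses of the theorem, I would control each error contribution explicitly: the corollary following Lemma~\ref{lem:Initproj} yields $C_{\mathrm{init}} \lesssim h^2$; Lemma~\ref{lem:ProjectionErrorBound} gives $C_{L^\infty} + C_{B^{1/2}_{2,\infty}} \lesssim h^2$; Lemma~\ref{lem:RegulartiyV} applied with $\alpha = 1/2$ and $r = \infty$ shows $C_V \lesssim \tau\, \seminorm{V(\varepsilon u)}_{B^{1/2}_{2,\infty} L^2_\omega L^2_x}^2$; and $C_{\mathrm{best}} \lesssim h^2 + \tau$ either directly via Lemma~\ref{lem:L2ProjEst} under assumption~(\ref{it:athm}), or via the subsequent local-projection lemma once the coupling $h^2 \lesssim \tau$ of assumption~(\ref{it:bthm}) is used to absorb the $h^4/\tau$ term coming from the $d^1$ contribution.

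The delicate point, and the main obstacle, is the data-approximation error: the lemma bounding $C_G$ is self-referential, producing
\begin{align*}
C_G \lesssim \tau\bigl(\norm{u}_{B^{1/2}_{2,\infty} L^2_\omega L^2_x}^2 + \norm{u}_{L^2_\omega L^\infty_t L^2_x}^2 + \norm{u_0}_{L^2_\omega L^2_x}^2 + C_{\mathrm{init}}\bigr) + \mathbb{E}\Bigl[\sum_{n=1}^N \tau \norm{e_n}_{L^2}^2\Bigr].
\end{align*}
To handle this I would localize the derivation of Theorem~\ref{thm:ErrorEstimates}, i.e.\ replace $N$ by an arbitrary $M \in [N]$ in the maxima and sums throughout the proof. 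Inserting all of the preceding bounds and using $\sum_{n=1}^M \tau \norm{e_n}_{L^2}^2 \leq \sum_{n=1}^M \tau \max_{m \leq n} \norm{e_m}_{L^2}^2$, the estimate becomes
\begin{align*}
\mathbb{E}\Bigl[\max_{n \leq M} \norm{e_n}_{L^2}^2\Bigr] \lesssim h^2 + \tau + \sum_{n=1}^M \tau\, \mathbb{E}\Bigl[\max_{m \leq n} \norm{e_m}_{L^2}^2\Bigr],
\end{align*}
to which the discrete Gronwall lemma applies and delivers the $L^2_\omega L^\infty_t L^2_x$ bound of order $h^2 + \tau$, together with the accompanying control of the non-linear symmetric gradient.

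Finally, substituting this uniform-in-$M$ bound on $\mathbb{E}[\max_n \norm{e_n}_{L^2}^2]$ back into the decomposition of Theorem~\ref{lem:ErrorEstimateBesov} and reusing the previously established bounds for $C_{\mathrm{init}}$, $C_{B^{1/2}_{2,\infty}}$, $C_{\mathrm{best}}$, $C_G$ and $C_V$ yields the announced $B^{1/2}_{2,\infty} L^2_\omega L^2_x$ rate of the same order $h^2 + \tau$, concluding the proof.
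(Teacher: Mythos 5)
Your proposal is correct and follows essentially the same route as the paper, which states Theorem~\ref{thm:Convergence} as a direct combination of Theorem~\ref{thm:ErrorEstimates}, Theorem~\ref{lem:ErrorEstimateBesov} and the bounds on $C_{\mathrm{init}}$, $C_{L^\infty}$, $C_{B^{1/2}_{2,\infty}}$, $C_{\mathrm{best}}$, $C_G$ and $C_V$ from Section~\ref{sec:Convergence} without writing out the details. You correctly identify and resolve the one step the paper leaves implicit, namely that the term $\mathbb{E}\bigl[\sum_{n}\tau\norm{e_n}_{L^2}^2\bigr]$ in the $C_G$ bound forces a localized-in-$M$ version of the $L^\infty_t L^2_x$ estimate followed by a discrete Gronwall argument before the result can be fed back into the $B^{1/2}_{2,\infty}$ decomposition.
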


\begin{remark}
It is shown in~\cite{MR4022286} (actually they consider more complex systems but the arguments carry over) and~\cite{2022arXiv220902796W} that the velocity, for $p \geq 2$, satisfies
\begin{subequations}
\begin{align}
u &\in L^2_\omega L^\infty_t W^{1,2}_x \cap L^2_\omega B^{1/2}_{\Phi_2,\infty} L^2_x, \\
V(\varepsilon u) &\in L^2_\omega L^2_t W^{1,2}_x \cap L^2_\omega B^{1/2}_{2,\infty} L^2_x.
\end{align}
\end{subequations}

Moreover, if additionally $\kappa > 0$, it is possible to transfer regularity from~$V(\varepsilon u)$ to~$\nabla u$. Indeed, similarly to~\eqref{eq:RelationGeq2} one finds, for all $A, B \in \mathbb{R}^{n \times n}$,
\begin{align*}
\abs{A - B}^2 \leq \kappa^{2-p}\abs{V(A) - V(B)}^2.
\end{align*} 
This together with Korn's inequality show
\begin{align*}
V(\varepsilon u ) \in L^2_\omega B^{1/2}_{2,\infty} L^2_x \quad  \Rightarrow \quad \nabla u \in L^2_\omega B^{1/2}_{2,\infty} L^2_x.
\end{align*}
Thus, the first regularity requirements of Theorem~\ref{thm:Convergence} are met. 

Unfortunately, it is non-trivial to verify either~\ref{it:athm} or~\ref{it:bthm}. Both require a sophisticated analysis which is not yet available.
\end{remark}
%

\section{Numerical simulations} \label{sec:Numerical-simulation}
In this section we report on numerical experiments that test the need of our theoretically imposed assumptions. We are mainly interested on the influence of (non)-exactly divergence free velocity approximations on the time stability and convergence of our algorithm:
\begin{quote}
Do non-exactly divergence free velocity approximations behave worse than exactly divergence free ones?
\end{quote}
We investigate this question by choosing specific spatial discretisations that are either exactly or discretely divergence-free while keeping all other parameters fixed. 

The code is available at \href{https://github.com/joernwichmann/Stochastic_pStokes}{https://github.com/joernwichmann/Stochastic\_pStokes}. The implementation uses the open-source finite element package \textit{Firedrake}~\cite{FiredrakeUserManual}, which itself heavily relies on \textit{PETSc}~\cite{petsc-web-page}. 

\subsection{Method}
In this section we explain the methodology of our numerical simulation. It contains model configuration, discretisation aspects and details on the data processing of solutions. The section closes with a summary, presented in terms of pseudo-code, of all steps used in our numerical study.

\subsubsection{Model}
We used the following model configuration:
\begin{itemize}
\item two-dimensional unit square $\mathcal{O} = (0,1)^2$;
\item final time $T=1$;
\item initial condition $u_0(x,y) =  \begin{pmatrix}
x^2(1-x)^2(2-6y+4y^2)y \\
-y^2(1-y)^2(2-6x+4x^2)x
\end{pmatrix} $;
\item one-dimensional Brownian motion $\beta$ which generates the cylindrical Wiener process $W = \beta$ for $\mathfrak{U} = \mathbb{R}$;  
\item noise coefficient $G(u) = \lambda u + g$ with intensity $\lambda = 1$ and shift~$g = u_0$;
\item viscous stress tensor $ S(A) = (0.1 + \abs{A}^2)^{(p-2)/2} A$.
\end{itemize}

\subsubsection{Spatial discretisation}
\paragraph{Discretely divergence free}~We choose a mixed spatial discretisation consisting of $\mathscr{P}_2$--$\mathscr{P}_1$ continuous velocity and continuous pressure approximate spaces, so-called Taylor--Hood elements~\cite{Taylor1973}; that is velocity and pressure approximate spaces are given by~\eqref{def:Xh} and~\eqref{def:Yh} with $r = 2$ and $r = 1$, respectively. Taylor-Hood elements are merely discretely divergence free; thus, generally violating the divergence free constraint. 

\paragraph{Exactly divergence free}~We choose Scott--Vogelius elements~\cite{MR813691}. Here, the velocity approximate space consists of $\mathscr{P}_k$ continuous elements; the pressure approximate space is defined by $\mathscr{P}_{k-1}$ discontinuous elements that additionally satisfy a constraint on singular vertices. This pairing is \textit{inf-sup} stable for $k\geq 4$,~\cite{MR3882274}; and $k=3$ on non-singular meshes~\cite{Guzmn2018}. We won't elaborate on the notion of singular meshes. Instead, we refer to~\cite{Guzmn2018} for details. Scott--Vogelius elements are exactly divergence free; in other words, they capture the analytic constraint on the discrete level. We choose $k=2$ which enables the fair comparison between Taylor--Hood and Scott--Vogelius elements. This choice is unproven of being stable but we couldn't detect any numerical instability in our simulations.  

\begin{figure}
\centering
\includegraphics[scale=0.25]{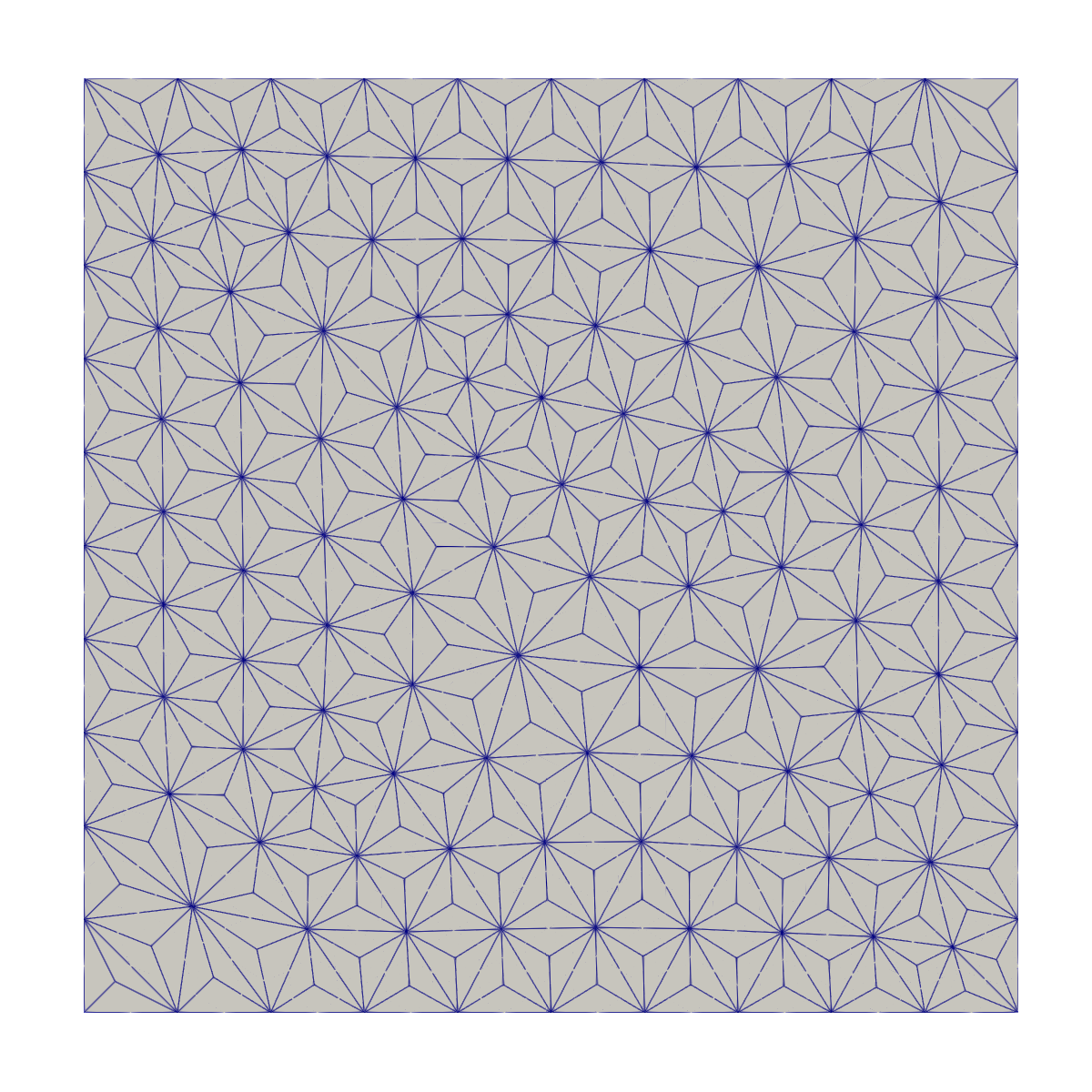}
\vspace*{-3em}
\caption{Non-singular mesh of the unit square.}
\label{fig:non-singularMesh}
\end{figure}

In Figure~\ref{fig:non-singularMesh} we present the mesh that is used in our simulations for both, Scott--Vogelius and Taylor--Hood elements. We remark that the mesh is non-singular. In particular, no singular vertex exists which removes the additional constraint of the Scott--Vogelius pressure space; it reduces to standard discontinuous, piecewise linear and mean-value free pressure approximates.   

\subsubsection{Temporal discretisation}
Let $N \in \mathbb{N}$ be the time resolution. We use an equi-distant time-grid and a time-stepping scheme. The details are described in Section~\ref{sec:time-disc}. 

We comment on some implementation aspects: the viscous stress tensor~$S$ is evaluated implicitly. This choice enabled the stability analysis of our algorithm but makes solving of the system~\eqref{eq:CoupledAlgo} more difficult. Our algorithm doesn't specify how to solve~\eqref{eq:CoupledAlgo} since we rely on the blackbox \textit{firedrake.solve}, which uses Newton's method for finding the solution. Efficient algorithms that find solutions to these non-linear problems are of individual interest.

\subsubsection{Sampling strategy}
We employ a Monte-Carlo approach for the discretisation of the probability space.

Let $M \in \mathbb{N}$ be the number of samples. Realisations of the random vector $(\Delta_n \mathbb{W}(\omega_m) )_{n=1}^{N}$, $m \in [M]$, where $\Delta_n \mathbb{W} = \mean{W}_n - \mean{W}_{n-1}$, are replaced by 
\begin{align} \label{eq:sample-random-inc}
Z_n^m \approx \Delta_n \mathbb{W}(\omega_m), \qquad n \in [N],\, m \in [M],
\end{align}
which are generated by a pseudo-random number generator and the sampling strategy proposed in~\cite[Section~4.3]{Diening2022}.

\subsubsection{Implemented algorithm}
In total, we implement the fully discrete system of equations:
\begin{itemize}
\item (Initialise) For all $m \in [M]$, we define $u_0^m = \Theta_h u_0 \in V_h$ by calling the \textit{firedrake.project} function~$\Theta_h$ that projects the analytic initial condition into the discrete velocity space~$V_h$;
\item (Time-stepping) For all $n \in [N]$ and $m \in [M]$, define $u_n^m \in V_h$ and $p_n^m \in Q_h$ by solving
\begin{subequations} \label{algo:Experiment-stepping}
\begin{align}
&\forall \xi \in V_h: \, \hspace{1em} \left( u_n^m, \xi \right)\hspace{-1pt} +\hspace{-1pt} \tau \left[ \left( S(\varepsilon u_n^m), \varepsilon \xi \right) - \left( p_n^m, \Div \xi\right) \right]\\ \nonumber
&\hspace{10.1em}  = \left( u_{n-1}^m, \xi \right)\hspace{-1pt} +\hspace{-1pt} \left[ \lambda \left(u_{(n-2)\vee 0}, \xi \right) + \left(\Theta_h g, \xi \right) \right] Z_n^m, \\
&\forall q \in Q_h: \, \hspace{1em} \left( \Div u_n^m, q \right) = 0.
\end{align}
\end{subequations}
\end{itemize}
Notice that, contrary to~\eqref{eq:CoupledAlgo}, we use the time-differentiated pressure variable $p$ instead of $\pi$; they are related by the identity $\pi_n^m - \pi_{n-1}^m = \tau p_n^m$. The pressure $\pi$ can be reconstructed from $p$ up to a constant: the initial pressure.

\subsubsection{Monitored statistics}
We exclusively trace the dependence of our algorithm on the time resolution. We monitor two statistics: stability and convergence.

Since we expect qualitative differences between Taylor--Hood and Scott--Vogelius elements for the evolution of velocity's divergence and pressure we investigate stability of the algorithm with respect to 
\begin{align} \label{eq:disc-to-stab}
N \mapsto \mathbb{E}\left[ \frac{1}{N} \sum_{n=1}^N \left( \norm{\Div u_n^N}_{L^2_x}^2 + \norm{p_n^N}_{L^2_x}^2 \right) \right],
\end{align}
where the super-script $N$ is used to indicate dependence on the time resolution.

Time convergence of the algorithm is measured with respect to
\begin{align} \label{eq:disc-to-error}
\begin{aligned}
N \mapsto &\mathbb{E}\left[ \max_{n\leq N} \norm{\mean{u}^N_n - u_n^N}_{L^2_x}^2 + \sum_{n=1}^N \int a^N_n(t) \norm{V\big( \varepsilon u(t) \big) - V(\varepsilon u_n^N)}_{L^2_x}^2 \dd t \right] \\
&\hspace{3em} +\max_{k \leq N} \frac{1}{k} \sum_{n=k}^N \mathbb{E}\left[ \norm{(\mean{u}_n^N - u_n^N) - (\mean{u}_{n-k}^N - u_{n-k}^N) }_{L^2_x}^2 \right],
\end{aligned}
\end{align}
and 
\begin{align}  \label{eq:disc-to-error-pres}
N \mapsto \mathbb{E}\left[\max_{ n\leq N} \norm{\pi(t_n^N) -  \pi_n^N}_{L^2_x}^2 \right].
\end{align}
The first mapping is motivated by Theorem~\ref{thm:Convergence}, which provides a rigorous analysis of the algorithm for Scott-Vogelius elements. Its evolution for Taylor-Hood elements is unclear. Identifying the natural function space for pressure convergence as well as quantifying its speed of convergence is an open field of research. Observation of the mapping~\eqref{eq:disc-to-error-pres} provides us with numerical evidence for pressure convergence.

Numerically,~\eqref{eq:disc-to-stab},~\eqref{eq:disc-to-error} and~\eqref{eq:disc-to-error-pres} are inaccessible for two reasons: 
\begin{itemize}
\item the expectation can only be computed approximately;
\item the analytic velocity~$u$ and pressure~$\pi$ are unknown;
\end{itemize}
Additionally, we want to remark that the time-discrete $B^{1/2}_{2,\infty} L^2_\omega L^2_x$-norm is computationally demanding, which motivated us to discard any further study of this quantity.

\subsubsection{Lifting vectors to functions}
The distance measures \eqref{eq:disc-to-error} and~\eqref{eq:disc-to-error-pres} additionally dependent on the time resolution~$N$. The dependence of the function spaces on the resolution can be removed by first lifting vectors to functions and afterwards comparing these functions.
 
Let $E$ be a vector space and $N \in \mathbb{N}$. Let $U=(u_n)_{n=0}^N$ with $u_n \in E$ for all $n \in [N_0]$. We define the piecewise constant interpolation by 
\begin{align*}
\mathscr{L}[U](t) &:= u_N + \sum_{n=1}^N (u_{n-1} - u_N) \chi_{J_n^N}(t),
\end{align*}
where $\chi$ denotes the indicator function, $J_n^N = [t_{n-1}^N,t_n^N)$ and $t_n^N = T \, n/N$.

\subsubsection{Comparison on different time scales} Instead of studying the distance of our approximation at a fixed time resolution and the (unknown) analytic solution, we replace the analytic solution by a more refined approximation.

Let $N_c$, $N_f \in \mathbb{N}$ be coarse, respectively, fine time discretisations; and $U(N_c) = (u^{N_c})$, $ U(N_f) = (u^{N_f})$ and $\Pi(N_c) = (\pi^{N_c})$, $\Pi(N_f) = (\pi^{N_f})$ be the corresponding coarse, respectively, fine velocity and pressure vectors. We define the following distances on vectors:
\begin{subequations} \label{def:choice-of-distance}
\begin{align}
\mathrm{d}_{L^\infty_t L^2_x}^{\mathrm{vel}}\big( U(N_c), U(N_f) \big) &:= \norm{\mathscr{L}[U(N_c)] - \mathscr{L}[U(N_f)] }_{L^\infty_t L^2_x}, \\
\mathrm{d}_{L^2_t V}^{\mathrm{vel}}\big( U(N_c), U(N_f) \big) &:= \norm{V(\varepsilon \mathscr{L}[U(N_c)] ) - V(\varepsilon \mathscr{L}[U(N_f)] ) }_{L^2_t L^2_x}, \\
\mathrm{d}_{L^\infty_t L^2_x}^{\mathrm{pre}}\big( \Pi( N_c) , \Pi(N_f )\big) &:= \norm{\mathscr{L}[\Pi(N_c)] - \mathscr{L}[\Pi(N_f)] }_{L^\infty_t L^2_x}.
\end{align}
\end{subequations}

\subsubsection{Empirical approximation of expectation}
We replace incomputable probabilistic mean-values by computable empirical approximations. 

Recall that $M$ denotes the number of samples. We define empirical stability measures by
\begin{subequations} \label{eq:statistical-stability}
\begin{align}
\mathrm{K}_{L^2_t L_{\Div}}^{\mathrm{vel},M}(N_c) &:= \left( \frac{1}{M} \sum_{m=1}^M \frac{1}{N_c} \sum_{n=1}^{N_c}  \norm{\Div U_n^m }_{L^2_x}^2 \right)^{1/2}, \\
\mathrm{K}_{W^{1,2}_t L^2_x}^{\mathrm{pre},M}(N_c) &:= \left( \frac{1}{M} \sum_{m=1}^M \frac{1}{N_c} \sum_{n=1}^{N_c} \norm{ p_n^m }_{L^2_x}^2 \right)^{1/2}.
\end{align}
\end{subequations}
For a given sample index $m \in [M]$ and time resolution $N \in \{N_c, N_f\}$, let $U_m(N)$ and $\Pi_m(N)$ denote approximate velocity and pressure vectors with resolution~$N$ generated by the $m$-th sample, respectively. We define empirical convergence measures by
\begin{subequations} \label{eq:statistical-mean}
\begin{align}
\mathrm{E}^{\mathrm{vel},M}_{L^\infty_t L^2_x}(N_c,N_f) &:= \left( \frac{1}{M} \sum_{m=1}^M \left( \mathrm{d}_{L^\infty_t L^2_x}^{\mathrm{vel}}\big( U_m(N_c), U_m(N_f) \big) \right)^2 \right)^{1/2}, \\
\mathrm{E}^{\mathrm{vel},M}_{L^2_t V}(N_c,N_f) &:= \left( \frac{1}{M} \sum_{m=1}^M \left( \mathrm{d}_{L^2_t V}^{\mathrm{vel}}\big( U_m(N_c), U_m(N_f) \big) \right)^2 \right)^{1/2}, \\
\mathrm{E}^{\mathrm{pre},M}_{L^\infty_t L^2_x}(N_c,N_f) &:= \left( \frac{1}{M} \sum_{m=1}^M \left( \mathrm{d}_{L^\infty_t L^2_x}^{\mathrm{pre}}\big( \Pi_m( N_c) , \Pi_m(N_f )\big) \right)^2 \right)^{1/2}.
\end{align}
\end{subequations}

\subsubsection{Simultaneous sampling on different time scales}
We need to ensure that coarse and fine approximates are computed on the same pseudo-random event. Notice that the pseudo-random numbers defined in~\eqref{eq:sample-random-inc} depend on the time resolution. In particular, different time resolutions cannot be sampled independently, as can be seen on the analytic level: the random vectors 
\begin{align*}
(\Delta_n \mathbb{W})_{n=1}^{N_c} \hspace{3em} \text{ and }  \hspace{3em} (\Delta_n \mathbb{W})_{n=1}^{N_f} 
\end{align*}
are correlated, requiring us to modify the generation of the pseudo-random numbers to account for dependencies on different time scales. If $N_f = r N_c$ for some $r \in \mathbb{N}$, we can write the coarse increments in terms of the fine ones, yielding a simple reconstruction rule~\cite[Lemma~37]{Diening2022}: For all $m \in [M]$
\begin{subequations}\label{eq:sample-reconst}
\begin{align} 
Z_1^{m,\mathrm{c}} &= \sum_{\ell=1}^r \left( 1 - \frac{\ell - 1}{r} \right) Z_{\ell}^{m,f}, \\ 
Z_n^{m,\mathrm{c}} &= \sum_{\ell=0}^{r-1} \frac{\ell+1}{r} Z_{rn - \ell}^{m,\mathrm{f}} +  \sum_{\ell=0}^{r-2} \left( 1 - \frac{\ell+1}{r} \right) Z_{r(n-1) -\ell}^{m,\mathrm{f}}, \qquad n \in \{2, \ldots, N_c\}.
\end{align}
\end{subequations}
In other words, generating the fine pseudo-random numbers fully determines the values of the coarse ones.

\subsubsection{Summary}
We are ready to present the algorithm used for the sample generation of approximate velocity and pressure, and their comparison on various function spaces. 
\begin{framed}
\begin{algorithm}[H]
\KwData{samples, time scales, space discretisation}
\KwResult{empirical study of stability and time convergence}
\For{sample in samples}{
generate pseudo-random vector on finest time scale by~\eqref{eq:sample-random-inc}\;
\For{time scale in time scales}{
reconstruct coarse pseudo-random vector from finest one using~\eqref{eq:sample-reconst}\;
initialise time-stepping relative to time scale\;
\For{time step in time steps}{
solve~\eqref{algo:Experiment-stepping}\;
}
}
compute sample stability\;
compute sample distance of coarse and fine approximates\;
}
compute empirical mean of sample stability by~\eqref{eq:statistical-stability}\;
compute empirical mean of sample distances by~\eqref{eq:statistical-mean}\;
\caption{Pseudo-code used for our numerical simulation.}
\label{algo:Pseudo-code}
\end{algorithm}
\end{framed}

We conducted four different experiments:
\begin{itemize}
\item we choose $p= 1.5$ or $p= 3$ in the definition of the viscous stress tensor~$S$;
\item we choose Taylor--Hood or Scott--Vogelius elements for the spatial discretisation. 
\end{itemize}
All other parameters are fixed to the values $N_f =  2^9$, $N_c \in \{2^2, 2^3, \ldots, 2^8 \}$ and $M = 1000$.

\subsection{Results}
In this section we present the results of Algorithm~\ref{algo:Pseudo-code}. 

In general we couldn't detect any differences in the approximation quality of Taylor--Hood elements compared to Scott--Vogelius elements. Both methods behave equally well with respect to time convergence (Figure~\ref{fig:ConvergencePlot}) and time stability (Figure~\ref{fig:StabilityPlot}) independently of $p = 1.5$ or $p=3$. 

Velocity and pressure convergence on $L^\infty_t L^2_x$ initially converge slower than the theoretically predicted rate~$1/2$. Only after a few refinements this rate can be observed. The symmetric gradient follows the theoretically predicted rate~$1/2$ consistently.

We observe stability of velocity's divergence on $L^2_t L^2_x$ for Taylor--Hood and Scott--Vogelius elements. Scott--Vogelius elements resolve the divergence-free constraint exactly; the computed velocity is divergence free up to the machine precision. The approximation obtained by Taylor--Hood elements isn't exactly divergence; but importantly, its $L^2_t L^2_x$ is uniformly bounded with respect to the time discretisation. 

Stability of pressure depends on the time discretisation. Initially, pressure measured on $W^{1,2}_t L^2_x$ is inversely proportional to the time resolution. In the super-quadratic case, $p=3$, a saturation effect is observed after a few time refinements, which eventually shows stability independently of the time discretisation. A similar effect is visible for the sub-quadratic case, $p=1.5$; however, the number of refinements needed for the saturation increases compared to the super-quadratic case.

\begin{figure}
\begin{center}
\includegraphics[scale=1]{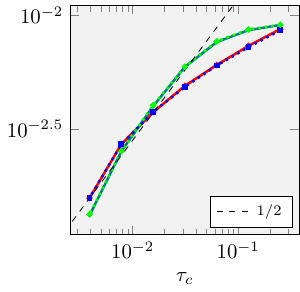}
\includegraphics[scale=1]{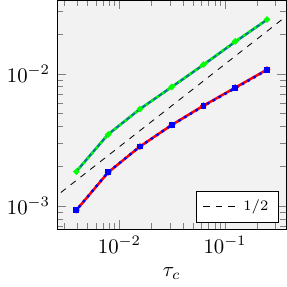}
\includegraphics[scale=1]{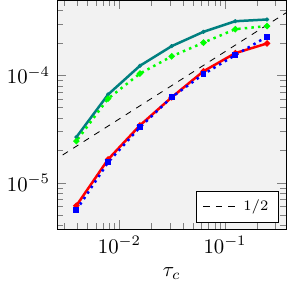}
\caption{Evolution of $\tau_c \mapsto \mathrm{E}^{\bullet,M}_{\bullet}(1/\tau_c,N_f)$ for velocity distances $\mathrm{E}^{\mathrm{vel},M}_{L^\infty_t L^2_x}$~(top left), $\mathrm{E}^{\mathrm{vel},M}_{L^2_t V}$~(top right) and pressure distance $\mathrm{E}^{\mathrm{pre},M}_{L^\infty_t L^2_x}$~(bottom) for Scott--Vogelius elements with $p=3$~(red solid) and $p=1.5$~(teal solid); and Taylor--Hood elements with $p=3$~(blue dotted) and $p=1.5$~(green dotted). }
\label{fig:ConvergencePlot}
\end{center}
\end{figure}

\begin{figure}
\begin{center}
\includegraphics[scale=1]{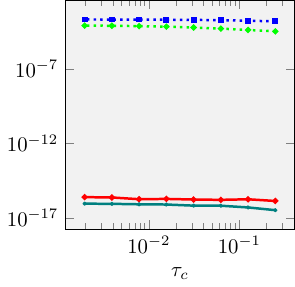}
\includegraphics[scale=1]{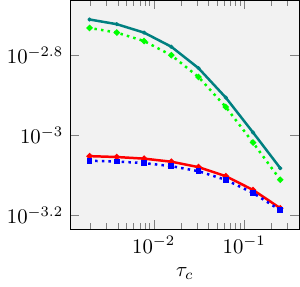}
\caption{Evolution of $\tau_c \mapsto \mathrm{K}^{\bullet,M}_{\bullet}(1/\tau_c)$ for velocity on $L^\infty_t L_{\Div}$~(left) and pressure on  $W^{1,2}_t L^2_x$~(right) for Scott--Vogelius elements with $p=3$~(red solid) and $p=1.5$~(teal solid); and Taylor--Hood elements with $p=3$~(blue dotted) and $p=1.5$~(green dotted).}
\label{fig:StabilityPlot}
\end{center}
\end{figure}

\subsection{Conclusions}
In the beginning of this section we asked: 'do non-exactly divergence free velocity approximations behave worse than exactly divergence free ones?'
Our numerical simulations provide a negative answer. We couldn't detect any qualitative difference between exactly and non-exactly divergence free approximations. Thus, from an applied point of view, there is no reason to favour exactly divergence free approximations. Contrary, one might favour non-exactly divergence free approximations, considering the additional effort need for the construction of exactly divergence free ones: we had to construct a mesh without singular vertices. Instead of constructing a non-singular mesh, we could alternatively restrict the pressure space at singular vertices to restore the inf-sup stability of the Scott--Vogelius pair. Both steps, the construction of a non-singular mesh and the restriction of the pressure space, are non-trivial ingredients in the algorithm. If the resulting simulations are qualitatively indistinguishable from algorithms that are easier to implement, e.g. Taylor--Hood elements, there is no reason (besides the theoretical justification as given by Theorem~\ref{thm:Convergence}) to favour Scott--Vogelius elements over Taylor--Hood elements. Moreover, our simulations suggest that Theorem~\ref{thm:Convergence} (with suitable modifications) might be true for non-exactly divergence free approximations. 

Another point is the numerically observed convergence of pressure on $L^\infty_t L^2_x$. A rigorous investigation of the pressure error is missing even in the unsteady deterministic situation. Related results for the steady case can be found in e.g.~\cite{MR4613234} and the references therein. It will be content of future research to explain this convergence for deterministic and stochastic generalised Stokes systems.

\appendix
\section{Pressure norms, tensor relations and extrapolation}
\label{app:Appendix}

\subsection{Pressure norms}
\label{sec:PressureNorms}
The inf-sup condition~\eqref{eq:InfSupAbst} is trivial if one works with norms defined by duality. 
\begin{lemma}
Let $\norm{\cdot}_V$ be a norm on $V_h$. Define the relation 
\begin{align} \label{def:Relation}
q_1 \sim q_2 \in Q_h \quad \Leftrightarrow \quad \forall v \in V_{h,\Div}^\perp: (q_1 - q_2, \Div v) =0.
\end{align}
Additionally, we set
\begin{align} \label{eq:DualDefinition}
\norm{q}_{V^*} := \sup_{v \in V_{h,\Div}^\perp} \frac{(q,\Div v)}{\norm{v}_V}.
\end{align}
Then $\norm{\cdot}_{V^*}$ defines a norm on $Q_h^{\sim}:= Q_h \slash \sim$.  Moreover, $\beta_h = 1$.
\end{lemma}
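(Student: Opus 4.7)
The proof proposal for this lemma is essentially a bookkeeping exercise: the duality definition of the norm together with the equivalence relation that identifies exactly the degenerate directions makes both claims almost tautological. I would structure the argument as follows.

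First, I would check that $\sim$ is an equivalence relation on $Q_h$. Reflexivity, symmetry, and transitivity all follow immediately from the bilinearity of $(\cdot, \Div \cdot)$. Then I would verify well-definedness of $\norm{\cdot}_{V^*}$ on the quotient: if $q_1 \sim q_2$, then for every $v \in V_{h,\Div}^\perp$ we have $(q_1,\Div v) = (q_2, \Div v)$, so the fractions in the supremum coincide for representatives of the same class. Thus $\norm{\cdot}_{V^*}$ descends to $Q_h^{\sim}$.

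Next I would verify the norm axioms. Non-negativity is clear. Absolute homogeneity $\norm{\lambda q}_{V^*} = |\lambda|\,\norm{q}_{V^*}$ is immediate from the linearity of the numerator and taking $|\lambda|$ out of the sup. The triangle inequality follows by splitting $(q_1+q_2, \Div v) = (q_1,\Div v) + (q_2, \Div v)$ and taking two suprema. The only point that uses the quotient construction is definiteness: if $\norm{[q]}_{V^*} = 0$, then $(q, \Div v) = 0$ for every $v \in V_{h,\Div}^\perp$, which is precisely $q \sim 0$, so $[q] = 0$ in $Q_h^{\sim}$. This is where the non-degeneracy condition mentioned in the text (Lemma~\ref{lem:NonDegenerate}) ensures that $[q] = 0$ really means the zero class and not something larger.

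Finally, for the inf-sup constant, I would observe that for every nonzero class $[q] \in Q_h^{\sim}$ the definition~\eqref{eq:DualDefinition} gives
\begin{align*}
\sup_{v \in V_{h,\Div}^\perp} \frac{(q, \Div v)}{\norm{q}_{V^*}\,\norm{v}_V} = \frac{1}{\norm{q}_{V^*}} \sup_{v \in V_{h,\Div}^\perp} \frac{(q,\Div v)}{\norm{v}_V} = 1,
\end{align*}
so taking the infimum over $[q]$ yields $\beta_h = 1$. I do not expect any real obstacle here; the only subtle point is to keep the quotient construction straight so that definiteness actually holds, but this is built into the definition of $\sim$.
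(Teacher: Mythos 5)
Your proposal is correct and follows essentially the same route as the paper, which simply notes that definiteness is recovered by passing to the quotient, that the remaining norm axioms are routine, and that $\beta_h=1$ is read off directly from the duality definition; you merely spell out the details. One small inaccuracy: definiteness on $Q_h^{\sim}$ is tautological from the definition of $\sim$ and does not invoke the non-degeneracy lemma, which concerns the complementary condition on test functions $v \in V_{h,\Div}^\perp$ rather than on $q$ -- but this does not affect the validity of your argument.
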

\begin{proof}
We need to restrict to the equivalence classes of~$\sim$ in order to recover the definiteness of~$\norm{\cdot}_{V^*}$. 
All other properties are easily checked. Thus, $\norm{\cdot}_{V^*}$ defines a norm on $Q_h^{\sim}$. 

The inf-sup constant can be computed by~\eqref{eq:DualDefinition}.
\end{proof}

\begin{lemma}\label{lem:Equi}
The following statements are equivalent:
\begin{enumerate}
\item \label{it:a Equiv} $\nabla^h :Q_h \to V_{h,\Div}^\perp$ is one to one,
\item \label{it:b Equiv} the relation~$\sim$ defined in~\eqref{def:Relation} is trivial,
\item \label{it:c Equiv}$(q,\Div v) = 0$ for all $v \in V_{h,\Div}^\perp$ implies $q = 0$,
\item \label{it:d Equiv}for all $q \in Q_h \backslash\{0\}$ exists $v \in V_{h,\Div}^\perp$ such that $(q,\Div v) \neq 0$.
\end{enumerate}
\end{lemma}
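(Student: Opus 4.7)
The lemma asserts the equivalence of four statements that are all reformulations of the same underlying injectivity property, so the proof should be essentially a matter of carefully unpacking the definitions. The plan is to establish the cycle (a) $\Leftrightarrow$ (c), (b) $\Leftrightarrow$ (c), and (c) $\Leftrightarrow$ (d).

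First I would dispatch the easiest equivalence (c) $\Leftrightarrow$ (d): statement (d) is just the contrapositive of (c), read with the existential rewritten as the negation of a universal. No content is needed beyond that observation.

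Next, for (b) $\Leftrightarrow$ (c), I would use linearity of the pairing $(\cdot, \Div \cdot)$. Setting $q := q_1 - q_2$, the relation~$\sim$ identifies $q_1$ and $q_2$ precisely when $(q, \Div v) = 0$ for all $v \in V_{h,\Div}^\perp$. The relation being trivial means that this forces $q_1 = q_2$, i.e.\ $q = 0$, which is exactly~(c).

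The only equivalence requiring a small computation is (a) $\Leftrightarrow$ (c). Here I would use the defining identity~\eqref{eq:DiscreteGrad} of~$\nabla^h$: for every $v \in V_h$ we have $(\nabla^h q, v) = -(q, \Div v)$. Since $V_h = V_{h,\Div} \oplus V_{h,\Div}^\perp$ and $(q, \Div v) = 0$ automatically holds for $v \in V_{h,\Div}$ by the definition of the discretely divergence-free subspace, $\nabla^h q$ vanishes in $V_h$ if and only if $(q, \Div v) = 0$ for all $v \in V_{h,\Div}^\perp$. Linearity of $\nabla^h$ then equates injectivity (a) with the implication in (c). No obstacle is expected: the only point one has to be slightly careful about is that testing $\nabla^h q$ against elements of $V_{h,\Div}$ is redundant, which follows directly from the construction of $V_{h,\Div}$.
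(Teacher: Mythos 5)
Your proposal is correct and follows essentially the same route as the paper: both hinge on the duality identity $(\nabla^h q, v) = -(q,\Div v)$ together with the fact that pairing against $V_{h,\Div}$ is automatically zero (equivalently, $\nabla^h Q_h \subset V_{h,\Div}^\perp$), and treat (c)$\Leftrightarrow$(d) as a contraposition. The only difference is organizational — you prove (a)$\Leftrightarrow$(c) directly while the paper runs the cycle (a)$\Rightarrow$(b)$\Rightarrow$(c)$\Rightarrow$(a) by testing with $v=\nabla^h(q_1-q_2)$ — which is immaterial.
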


\begin{remark}
If $Q_h \subset \Div V_h$, then~\ref{it:c Equiv} is satisfied. Moreover,~\ref{it:d Equiv} is the complementary non-degeneracy condition to~\eqref{eq:NonDeg} (see also Lemma~\ref{lem:NonDegenerate}).
\end{remark}

\begin{proof}[Proof of Lemma~\ref{lem:Equi}]
$\ref{it:a Equiv} \Rightarrow \ref{it:b Equiv}$: Let $q_1 \sim q_2$, i.e., for all $v \in V_{h,\Div}^\perp$
\begin{align*}
0 &= (q_1 - q_2, \Div v) = -(\nabla^h(q_1 - q_2), v).
\end{align*}
Choosing~$v = \nabla^h(q_1 - q_2) \in V_{h,\Div}^\perp$ yields
\begin{align*}
\norm{\nabla^h(q_1 -q_2)}_{L^2_x}^2 = 0.
\end{align*}
Since $\nabla^h$ is one to one we conclude $q_1 = q_2$.

$\ref{it:b Equiv} \Rightarrow \ref{it:c Equiv}$: Let $(q,\Div v) = 0$ for all $v \in V_{h,\Div}^\perp$. Since 
\begin{align*}
0 &= (q,\Div v) = (q - 0,\Div v)
\end{align*}
for all $v \in V_{h,\Div}^\perp$ we find $q \sim 0$. Thus, due to triviality of~$\sim$, it holds $q = 0$.

$\ref{it:c Equiv} \Rightarrow \ref{it:a Equiv}$: Let $q_1, q_2 \in Q_h$ such that $\nabla^h q_1 = \nabla^h q_2$. In particular, for all $v \in V_{h,\Div}^\perp$
\begin{align*}
0 = -(\nabla^h q_1- \nabla^h q_2,v) = (q_1 - q_2, \Div v).
\end{align*}
Thus $q_1 = q_2$.

$\ref{it:c Equiv} \Leftrightarrow \ref{it:d Equiv}$: Notice that
\begin{align*}
&\big\{ [\forall v \in V_{h,\Div}^\perp: (q,\Div v) = 0] \Rightarrow [q=0] \big\}\\
\Leftrightarrow \hspace{2em} &\big\{ [q\neq 0] \Rightarrow [\exists v \in V_{h,\Div}^\perp: (q,\Div v) \neq 0]   \big\}.
\end{align*}
\end{proof}

\begin{lemma} \label{lem:NonDegenerate}
For all $v \in V_{h,\Div}^\perp \backslash \{ 0\}$ exists $q \in Q_h$ such that~\eqref{eq:NonDeg} holds.
\end{lemma}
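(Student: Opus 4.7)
The plan is to argue by contradiction using only the two definitions that $V_{h,\Div}$ is the set of discretely divergence-free fields in $V_h$ and $V_{h,\Div}^\perp$ is its $L^2$-orthogonal complement within $V_h$.

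First, I would fix $v \in V_{h,\Div}^\perp$ and suppose, for contradiction, that $(q, \Div v) = 0$ for every $q \in Q_h$. By the very definition of $V_{h,\Div}$, this membership condition would place $v$ in $V_{h,\Div}$. Combined with the standing assumption $v \in V_{h,\Div}^\perp$, one would then have $v \in V_{h,\Div} \cap V_{h,\Div}^\perp$.

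Since $V_h$ splits perpendicularly as $V_h = V_{h,\Div} \oplus V_{h,\Div}^\perp$ with respect to the $L^2$-inner product, the intersection of the two summands is trivial; testing $v$ against itself gives $(v,v) = \norm{v}_{L^2}^2 = 0$, hence $v = 0$. This contradicts $v \in V_{h,\Div}^\perp \setminus \{0\}$, and therefore some $q \in Q_h$ must satisfy $(q,\Div v) \neq 0$.

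There is really no obstacle here: the statement is essentially a restatement of the characterization of $V_{h,\Div}^\perp$ via the $L^2$-decomposition, and the proof is a one-line contradiction argument. The only implicit ingredient is the definiteness of the $L^2$-inner product restricted to the finite-dimensional space $V_h$, which holds since $V_h \subset W^{1,\infty}_x$.
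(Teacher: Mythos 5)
Your proof is correct and follows essentially the same route as the paper: a contradiction argument showing that if $(q,\Div v)=0$ for all $q\in Q_h$ then $v\in V_{h,\Div}\cap V_{h,\Div}^\perp=\{0\}$, contradicting $v\neq 0$. The only difference is that you spell out why the intersection is trivial (testing $v$ against itself in the $L^2$-inner product), which the paper leaves implicit.
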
 
\begin{proof}
We show that its negation is false. Let us assume there exists $v \in V_{h,\Div}^\perp \backslash \{ 0\}$ such that for all $q \in Q_h$
\begin{align*}
(q,\Div v) = 0.
\end{align*}
Therefore $v \in V_{h,\Div}$. But this contradicts $v \in V_{h,\Div}^\perp \backslash \{ 0\} \cap V_{h,\Div} = \emptyset$.
\end{proof}

The discrete gradient~\eqref{eq:DiscreteGrad} allows us to represent the stochastic norm in a different form.
\begin{lemma}
Let $q \in L^1$. Then it holds
\begin{align} \label{eq:PressureNormAlter}
\norm{q}_{Q_\mathrm{sto}} = \norm{\Pi_{\Div}^\perp \nabla^h q}_{L^2_x}.
\end{align}
\end{lemma}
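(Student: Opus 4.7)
The plan is to start from the definition
\begin{align*}
\norm{q}_{Q_{\mathrm{sto}}} = \sup_{v \in V_{h,\Div}^\perp} \frac{(q, \Div v)}{\norm{v}_{L^2}}
\end{align*}
and rewrite the bilinear form $(q, \Div v)$ via the defining identity of the discrete gradient~\eqref{eq:DiscreteGrad}, namely $(q, \Div v) = -(\nabla^h q, v)$ for every $v \in V_h$. This replaces the duality-flavoured numerator by an $L^2$-inner product on $V_h$, which is the right side on which the orthogonal decomposition $V_h = V_{h,\Div} \oplus V_{h,\Div}^\perp$ acts.

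Next, I would exploit the splitting $\nabla^h q = \Pi_{\Div} \nabla^h q + \Pi_{\Div}^\perp \nabla^h q$. Since the test function $v$ lies in $V_{h,\Div}^\perp$, the piece $\Pi_{\Div} \nabla^h q \in V_{h,\Div}$ is orthogonal to $v$ and therefore drops out, so
\begin{align*}
(\nabla^h q, v) = (\Pi_{\Div}^\perp \nabla^h q, v).
\end{align*}
This reduces the claim to
\begin{align*}
\norm{q}_{Q_{\mathrm{sto}}} = \sup_{v \in V_{h,\Div}^\perp} \frac{-(\Pi_{\Div}^\perp \nabla^h q, v)}{\norm{v}_{L^2}}.
\end{align*}

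The last step is the standard two-sided bound. Cauchy--Schwarz gives the upper bound $\norm{\Pi_{\Div}^\perp \nabla^h q}_{L^2}$. For the matching lower bound, the vector $w := -\Pi_{\Div}^\perp \nabla^h q$ is itself an admissible test function since $\Pi_{\Div}^\perp$ maps into $V_{h,\Div}^\perp$ by construction; inserting $v = w$ realizes equality and concludes~\eqref{eq:PressureNormAlter}. I do not anticipate any serious obstacle: the argument is a clean duality plus orthogonality computation, and the only points requiring care are tracking the minus sign inherited from~\eqref{eq:DiscreteGrad} and observing that $\Pi_{\Div}^\perp \nabla^h q$ lies in the constraint space, which is automatic from the definition of the projection.
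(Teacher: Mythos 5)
Your argument is correct, and it is essentially the argument the paper intends: the lemma is stated without proof, but the analogous norm identity in Lemma~\ref{lem:InitialPressure} is proved in exactly this way (rewrite the numerator, insert the projection using orthogonality and symmetry of $\Pi_{\Div}^\perp$, and attain the supremum at the projected vector itself). The only cosmetic caveat is the degenerate case $\Pi_{\Div}^\perp \nabla^h q = 0$, where both sides vanish trivially.
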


The next lemma addresses well posedness of the initial projection~\eqref{eq:InitPressure02}.
\begin{lemma} \label{lem:InitialPressure}
Let $u_0 \in V_h$. Then there exists a unique $\pi^{\mathrm{init}} \in Q_h^{\sim}$ solving~\eqref{eq:InitPressure01}. Moreover, it holds
\begin{align} \label{eq:InitialPressureEstimate}
\norm{\pi^{\mathrm{init}}}_{Q_\mathrm{sto}} = \norm{\Pi_{\Div}^\perp u_0}_{L^2_x}.
\end{align}
\end{lemma}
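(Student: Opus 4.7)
\textbf{Proof plan for Lemma~\ref{lem:InitialPressure}.}

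The plan is to split the assertion into (i) well-posedness of the discrete variational problem and (ii) an explicit evaluation of the dual norm.

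For (i), I would recast \eqref{eq:InitPressure01} as the statement that the bounded bilinear form $b : Q_h^{\sim} \times V_{h,\Div}^\perp \to \mathbb{R}$ defined by $b(q,v) := (q, \Div v)$ is non-degenerate on both arguments. Non-degeneracy in the first argument is built into the equivalence relation~$\sim$ used to define $Q_h^{\sim}$, cf.~\eqref{def:Relation} and the equivalence in Lemma~\ref{lem:Equi}. Non-degeneracy in the second argument is precisely the content of Lemma~\ref{lem:NonDegenerate}. Since both~$Q_h^{\sim}$ and~$V_{h,\Div}^{\perp}$ are finite-dimensional, two-sided non-degeneracy forces them to have the same dimension, and Babuska's lemma (or equivalently, a direct rank argument) then provides a unique~$\pi^{\mathrm{init}} \in Q_h^{\sim}$ with $b(\pi^{\mathrm{init}}, \xi) = (u_0, \xi)$ for every $\xi \in V_{h,\Div}^{\perp}$.

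For (ii), I would start from the definition~\eqref{eq:StochPresNorm} of the stochastic pressure norm and plug in~\eqref{eq:InitPressure01}, obtaining
\begin{align*}
\norm{\pi^{\mathrm{init}}}_{Q_\mathrm{sto}} = \sup_{\xi \in V_{h,\Div}^\perp} \frac{(\pi^{\mathrm{init}}, \Div \xi)}{\norm{\xi}_2} = \sup_{\xi \in V_{h,\Div}^\perp} \frac{(u_0, \xi)}{\norm{\xi}_2}.
\end{align*}
Using the orthogonal decomposition $u_0 = \Pi_{\Div} u_0 + \Pi_{\Div}^\perp u_0$ and that $(\Pi_{\Div} u_0, \xi) = 0$ for every $\xi \in V_{h,\Div}^{\perp}$, the right-hand side reduces to $\sup_{\xi \in V_{h,\Div}^\perp} (\Pi_{\Div}^\perp u_0, \xi)/\norm{\xi}_2$. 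Cauchy–Schwarz gives the upper bound $\norm{\Pi_{\Div}^\perp u_0}_2$, and since the maximizer $\xi = \Pi_{\Div}^\perp u_0$ itself belongs to $V_{h,\Div}^{\perp}$, the bound is attained. This yields~\eqref{eq:InitialPressureEstimate}.

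There is no real obstacle here; the only point that requires some care is to make sure that the variational problem is posed on the quotient space $Q_h^{\sim}$ rather than on $Q_h$, so that both the injectivity needed for uniqueness and the matching dimensions needed for surjectivity are available. Once this is in place, both existence and the norm identity are essentially one-line consequences of finite-dimensional linear algebra and the orthogonal splitting $V_h = V_{h,\Div} \oplus V_{h,\Div}^{\perp}$.
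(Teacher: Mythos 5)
Your proposal is correct and follows essentially the same route as the paper: existence and uniqueness via Babuska's lemma on the quotient space $Q_h^{\sim}$ (using the non-degeneracy from Lemma~\ref{lem:NonDegenerate}), and the norm identity by inserting \eqref{eq:InitPressure01} into the definition \eqref{eq:StochPresNorm}, replacing $u_0$ by $\Pi_{\Div}^\perp u_0$ via the orthogonal splitting, and attaining the supremum at $\xi = \Pi_{\Div}^\perp u_0$. No gaps.
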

\begin{proof}
The existence and uniqueness of~$\pi^{\mathrm{init}} \in Q_h^{\sim}$ that solves~\eqref{eq:InitPressure01} follows from Babuska's lemma. 

Next, we prove the norm equality. Let $0 \neq \xi \in V_{h,\Div}^\perp$. Divide both sides of~\eqref{eq:InitPressure01} by $\norm{\xi}_{L^2_x}$ and take the supremum over $\xi \in V_{h,\Div}^\perp$
\begin{align*}
\norm{\pi^{\mathrm{init}}}_{Q_{\mathrm{sto}}}= \sup_{\xi \in V_{h,\Div}^\perp} \frac{\left( u_0, \xi \right)}{\norm{\xi}_{L^2_x}}.
\end{align*}
Using the projection property and symmetry of $\Pi_{\Div}^\perp$ we derive
\begin{align*}
\sup_{\xi \in V_{h,\Div}^\perp} \frac{\left( u_0, \xi \right)}{\norm{\xi}_{L^2_x}} = \sup_{\xi \in V_{h,\Div}^\perp} \frac{\left(\Pi_{\Div}^\perp u_0, \xi \right)}{\norm{\xi}_{L^2_x}}.
\end{align*}
The supremum is attained by choosing $\xi = \Pi_{\Div}^\perp u_0 \in V_{h,\Div}^\perp$.
\end{proof}

\subsection{Uniformly convex $N$-functions}
The following results are standard in the context of uniformly convex $N$-functions. For more details see e.g.~\cite[Appendix~B]{DieForTomWan20}.

Let $p \in (1,\infty)$ and $\kappa \geq 0$. Define $S, V : \mathbb{R}^{n\times n} \to \mathbb{R}^{n\times n}$ by 
\begin{align}
S(A) := \left( \kappa + \abs{A} \right)^{p-2} A \quad \text{ and } \quad V(A) := \left( \kappa + \abs{A} \right)^{(p-2)/2} A.
\end{align}

\begin{lemma}[Relation of tensors]\label{lem:Relation}
Let $A, B \in \mathbb{R}^{n\times n}$. Then
\begin{align}\label{eq:Monotone}
\big( S(A) - S(B) \big):(A-B) \eqsim \abs{V(A) - V(B)}^2. 
\end{align}
\begin{enumerate}
\item If $ p \geq 2$. Then
\begin{align} \label{eq:RelationGeq2}
\abs{A - B}^p \lesssim \abs{V(A) - V(B)}^2 \lesssim \abs{S(A) - S(B)}^{p'}.
\end{align}
\item If $p \leq 2$. Then 
\begin{align} \label{eq:RelationLeq2}
\abs{S(A) - S(B)}^{p'} \lesssim \abs{V(A) - V(B)}^2 \lesssim \abs{A - B}^p.
\end{align}
\end{enumerate}
\end{lemma}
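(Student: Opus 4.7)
My plan is to reduce all three claims to the master pointwise equivalence
\begin{align*}
\big(S(A)-S(B)\big):(A-B) \,\eqsim\, (\kappa + \abs{A} + \abs{B})^{p-2}\,\abs{A-B}^2 \,\eqsim\, \abs{V(A)-V(B)}^2,
\end{align*}
and then read off the bounds in \eqref{eq:RelationGeq2}--\eqref{eq:RelationLeq2} by purely algebraic manipulations of the middle scalar quantity. This is the standard approach for tensors of $(p,\kappa)$-structure, and closely follows the calculus developed in~\cite{DieForTomWan20} that is already cited at the start of the subsection.

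The first step is the master equivalence. Setting $B_t := B + t(A-B)$ and using the fundamental theorem of calculus, one has
\begin{align*}
S(A)-S(B) = \int_0^1 DS(B_t)(A-B)\,\dt.
\end{align*}
A direct computation gives $DS(X) = (\kappa+\abs{X})^{p-2}\,\mathrm{Id} + (p-2)(\kappa+\abs{X})^{p-3}\,\abs{X}^{-1} X \otimes X$, whose quadratic form $(DS(X)Y):Y$ is pointwise equivalent to $(\kappa+\abs{X})^{p-2}\abs{Y}^2$ with constants depending only on $\min(1,p-1)$ and $\max(1,p-1)$. The same computation with exponent $(p-2)/2$ in place of $(p-2)$ gives the analogous statement for $V$, together with the Lipschitz bound $\abs{V(A)-V(B)} \lesssim (\kappa+\abs{A}+\abs{B})^{(p-2)/2}\abs{A-B}$ and (taking the norm instead of contracting with $(A-B)$) the Lipschitz bound $\abs{S(A)-S(B)} \lesssim (\kappa+\abs{A}+\abs{B})^{p-2}\abs{A-B}$. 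Integrating in $t$ and invoking the elementary scalar inequality
\begin{align*}
\int_0^1 (\kappa + \abs{B_t})^{p-2}\,\dt \,\eqsim\, (\kappa + \abs{A} + \abs{B})^{p-2}
\end{align*}
(proved by splitting $[0,1]$ into pieces on which $\abs{B_t}$ is comparable to $\max(\abs{A},\abs{B})$) then yields the master equivalence; taking square roots inside the $V$-version gives $\abs{V(A)-V(B)}^2 \eqsim (\kappa+\abs{A}+\abs{B})^{p-2}\abs{A-B}^2$, and combining the three lines proves \eqref{eq:Monotone}.

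Once the master equivalence is in hand, \eqref{eq:RelationGeq2} and \eqref{eq:RelationLeq2} are algebra on the scalar quantities. For $p\geq 2$ the exponent $p-2$ is nonnegative, so $\kappa+\abs{A}+\abs{B} \geq \abs{A-B}$ yields $(\kappa+\abs{A}+\abs{B})^{p-2}\abs{A-B}^2 \geq \abs{A-B}^p$, giving the lower bound in \eqref{eq:RelationGeq2}; and raising the pointwise Lipschitz estimate for $S$ to the $p'$-th power, together with the identities $(p-2)p' - (p-2) = (p-2)/(p-1) \geq 0$ and $2 - p' = (p-2)/(p-1) \geq 0$, gives $\abs{S(A)-S(B)}^{p'} \gtrsim (\kappa+\abs{A}+\abs{B})^{p-2}\abs{A-B}^2$, hence the upper bound. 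The case $p \leq 2$ is completely symmetric: the exponents change sign and every monotonicity step flips accordingly. The only non-routine ingredient is the scalar integration lemma above, but it is elementary and well documented in the $N$-function literature, so I expect no real obstacle.
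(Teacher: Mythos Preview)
The paper does not prove this lemma; it is stated as a standard result with the remark ``The following results are standard in the context of uniformly convex $N$-functions. For more details see e.g.~[DieForTomWan20, Appendix~B].'' Your sketch is precisely the standard derivation from that literature and is essentially correct.

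One small slip: for the upper bound in \eqref{eq:RelationGeq2} you write that ``raising the pointwise Lipschitz estimate for $S$ to the $p'$-th power \ldots\ gives $\abs{S(A)-S(B)}^{p'} \gtrsim (\kappa+\abs{A}+\abs{B})^{p-2}\abs{A-B}^2$''. The Lipschitz estimate is an \emph{upper} bound on $\abs{S(A)-S(B)}$, so it goes the wrong way. What you actually need is the companion \emph{lower} bound $\abs{S(A)-S(B)} \gtrsim (\kappa+\abs{A}+\abs{B})^{p-2}\abs{A-B}$, which follows from the monotonicity part of the master equivalence via Cauchy--Schwarz: $\abs{S(A)-S(B)}\,\abs{A-B} \geq (S(A)-S(B)):(A-B) \eqsim (\kappa+\abs{A}+\abs{B})^{p-2}\abs{A-B}^2$. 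With that correction your exponent bookkeeping (the identities $(p-2)(p'-1) = (p-2)/(p-1)$ and $2-p' = (p-2)/(p-1)$) goes through as written, and the $p\leq 2$ case is indeed symmetric.
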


\begin{lemma}[Generalized Young's inequality] \label{lem:GeneralizedYoung}
Let $\delta > 0$. Then there exists $c_\delta \geq 1$ such that for all $A,B, C \in \mathbb{R}^{n\times n}$ 
\begin{align} \label{eq:GeneralizedYoung}
\big( S(A) - S(B) \big): (C - B) &\leq \delta \abs{V(A) - V(B)}^2 + c_\delta \abs{V(C) - V(B)}^2.
\end{align}
\end{lemma}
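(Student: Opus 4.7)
The plan is to reduce the tensor-valued inequality to a scalar Young-type inequality for a shifted $N$-function, which is the standard toolkit for $p$-structure estimates (as in the framework developed e.g.~in~\cite{DieForTomWan20}, referenced by the authors just above the statement).

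First I would bound the bilinear form by Cauchy--Schwarz,
\begin{align*}
\big(S(A)-S(B)\big):(C-B) \leq \abs{S(A)-S(B)}\,\abs{C-B}.
\end{align*}
This reduces the problem to a scalar inequality between norms of differences. Now I would introduce the $N$-function
\begin{align*}
\phi(t) := \int_0^t (\kappa+s)^{p-2}\,s\,\mathrm{d}s,
\end{align*}
together with its shifted version $\phi_a(t) := \int_0^t \phi'(a+s)\,s/(a+s)\,\mathrm{d}s$ for $a\geq 0$. The two classical equivalences from the $p$-Laplace toolbox,
\begin{align*}
\abs{V(X)-V(Y)}^2 \eqsim \phi_{|Y|}\!\big(\abs{X-Y}\big), \qquad
\abs{S(X)-S(Y)} \eqsim \phi'_{|Y|}\!\big(\abs{X-Y}\big),
\end{align*}
hold uniformly in $X,Y\in\mathbb{R}^{n\times n}$ with constants depending only on $p$; these are exactly the companions to the relation in Lemma~\ref{lem:Relation} that makes the rest of the argument turn.

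With $Y=B$ fixed, the right-hand side of Cauchy--Schwarz becomes, up to constants,
\begin{align*}
\phi'_{|B|}\!\big(\abs{A-B}\big)\,\abs{C-B}.
\end{align*}
The key ingredient now is the shifted Young inequality: for every $\delta>0$ there exists $c_\delta\geq 1$ such that
\begin{align*}
\phi'_a(s)\,t \leq \delta\,\phi_a(s) + c_\delta\,\phi_a(t) \qquad \forall\, s,t,a \geq 0.
\end{align*}
This is the combination of the standard Young inequality $st\leq \phi_a(s)+\phi_a^*(t)$ with the equivalence $\phi_a^*\circ \phi'_a \eqsim \phi_a$, plus the fact that $\phi_a$ (like $\phi$) satisfies a $\Delta_2$-condition so that $\delta$ can be pulled out at the cost of enlarging the conjugate constant. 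Applying this with $a=|B|$, $s=|A-B|$, $t=|C-B|$ yields
\begin{align*}
\phi'_{|B|}\!\big(\abs{A-B}\big)\,\abs{C-B}
\leq \delta\,\phi_{|B|}\!\big(\abs{A-B}\big) + c_\delta\,\phi_{|B|}\!\big(\abs{C-B}\big).
\end{align*}
Translating both terms back via the first equivalence above gives exactly~\eqref{eq:GeneralizedYoung}.

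The main (and really only) obstacle is the shifted Young inequality together with the equivalences relating $V$, $S$ and $\phi_{|B|}$; everything else is rearrangement. These facts are standard and the authors explicitly refer to~\cite{DieForTomWan20} for precisely this setup, so the sketch above can be carried out essentially by citation, with the only care being to keep the shift anchored at $|B|$ throughout so that the constants in the two equivalences remain absolute (i.e.~independent of $A$ and $C$).
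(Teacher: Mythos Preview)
Your proposal is correct and is precisely the standard shifted $N$-function argument the paper has in mind. In fact the paper does not supply its own proof of Lemma~\ref{lem:GeneralizedYoung} at all; it merely declares the result standard and refers to~\cite[Appendix~B]{DieForTomWan20}, which contains exactly the Cauchy--Schwarz plus shifted Young machinery you outlined.
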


\subsection{Discrete extrapolation} 
In this appendix we extend the continuous extrapolation result presented in~\cite[Chapter~4]{MR1725357} to the time-discrete framework. It is then used to transfer regularity results for continuous stochastic integration as presented in~\cite{MR4116708} to the time-discrete case. This is the foundation for the estimates presented in Section~\ref{sec:PressureStrong}.

\subsubsection{Domination}

\begin{definition}[Domination relation]
Let $\big( \Omega; \mathcal{F}, (\mathcal{F}_n), \mathbb{P} \big)$ be a filtered probability space. Moreover, let $(X_n)$ and $(Y_n)$ be two real-valued, non-negative, $(\mathcal{F}_n)$-adapted processes. We call~$(X_n)$ dominated by $(Y_n)$ if there exists a constant~$C\geq 1$ such that for all stopping times~$\mathfrak{n}$ with respect to the shifted filtration~$(\mathcal{H}_n) := (\mathcal{F}_{n+1})$ it holds 
\begin{align} \label{eq:DominationCondition}
\mathbb{E} \left[ X_\mathfrak{n} \right] \leq C \mathbb{E} \left[ Y_\mathfrak{n} \right].
\end{align}
\end{definition}

\begin{lemma} \label{lem:Domination}
Let $(X_n)$ be dominated by~$(Y_n)$. Then for all $x, y > 0$ it holds
\begin{align}
\mathbb{P}\left( X^*  > x ; Y^* \leq y \right) <  C \frac{1}{x} \mathbb{E} \left[ Y^* \wedge y \right],
\end{align}
where $Z^* = \sup_{n} Z_n$, $Z \in \{X,Y\}$.
\end{lemma}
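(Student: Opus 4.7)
The plan is to run a Lenglart-type good-$\lambda$ argument, tailored to the shifted filtration $(\mathcal{H}_n) = (\mathcal{F}_{n+1})$ which appears in the domination hypothesis~\eqref{eq:DominationCondition}. The strategy is to exhibit a single $(\mathcal{H}_n)$-stopping time $\mathfrak{n}$ with two properties: (i) on the event $\{X^* > x;\, Y^* \leq y\}$ one has $X_\mathfrak{n} > x$, and (ii) $Y_\mathfrak{n} \leq Y^* \wedge y$ pointwise. Once such a $\mathfrak{n}$ is available, Markov's inequality combined with~\eqref{eq:DominationCondition} immediately produces the claimed bound.

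For (i) I take $\sigma_x := \inf\{ n : X_n > x\}$, an $(\mathcal{F}_n)$-- and hence $(\mathcal{H}_n)$--stopping time, noting $X_{\sigma_x} > x$ on $\{\sigma_x < \infty\}$. For (ii) the naive candidate $\tau_y := \inf\{n : Y_n > y\}$ is one step too late, since $Y_{\tau_y}$ can overshoot $y$. The shifted filtration is precisely the fix. The one-step anticipation $\tau_y' := \tau_y - 1$ is a legitimate $(\mathcal{H}_n)$--stopping time because $\{\tau_y' \leq n\} = \{\tau_y \leq n+1\} \in \mathcal{F}_{n+1} = \mathcal{H}_n$, and by definition of $\tau_y$ one has $Y_{\tau_y'} \leq y$ whenever $\tau_y' < \infty$. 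Setting $\mathfrak{n} := \sigma_x \wedge \tau_y'$ then combines both properties: on the target event $\{X^* > x;\, Y^* \leq y\} = \{\sigma_x < \infty,\, \tau_y = \infty\}$ we have $\tau_y' = \infty$ and so $\mathfrak{n} = \sigma_x$ with $X_\mathfrak{n} > x$, while in general $Y_\mathfrak{n} \leq y$.

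To apply~\eqref{eq:DominationCondition}, which requires a bounded stopping time, I pass to the truncation $\mathfrak{n}_N := \mathfrak{n} \wedge N$. Markov's inequality on $\{X^* > x;\, Y^* \leq y\} \cap \{\sigma_x \leq N\}$ and the domination bound yield
\[
x \, \mathbb{P}\big(X^* > x;\, Y^* \leq y;\, \sigma_x \leq N\big) \leq \mathbb{E}[X_{\mathfrak{n}_N}] \leq C \, \mathbb{E}[Y_{\mathfrak{n}_N}].
\]
A short case check gives $Y_{\mathfrak{n}_N} \leq Y^* \wedge y$ pointwise: on $\{\mathfrak{n} \leq N\}$ this follows from $Y_\mathfrak{n} \leq y$ as constructed, while on $\{\mathfrak{n} > N\}$ one has in particular $\tau_y > N+1$, so $Y_N \leq y$ and thus $Y_{\mathfrak{n}_N} = Y_N \leq Y^* \wedge y$. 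Hence $\mathbb{E}[Y_{\mathfrak{n}_N}] \leq \mathbb{E}[Y^* \wedge y]$, and letting $N \to \infty$ monotone convergence turns the left-hand side into $x \, \mathbb{P}(X^* > x;\, Y^* \leq y)$.

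The main obstacle is the adaptedness of the stopping time: the filtration shift is not cosmetic. Using the $(\mathcal{F}_n)$--stopping time $\sigma_x \wedge \tau_y$ directly would allow $Y$ to jump above $y$ at the stopping time, destroying the desired bound by $\mathbb{E}[Y^* \wedge y]$. The shift to $(\mathcal{H}_n)$ is exactly what legitimises stopping just before $Y$ crosses the threshold, which is the reason~\eqref{eq:DominationCondition} is formulated for the shifted filtration; once this is recognised, the rest of the argument is the standard Lenglart good-$\lambda$ chain.
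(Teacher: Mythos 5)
Your proof is correct and follows essentially the same route as the paper: the same stopping times $\inf\{n: X_n>x\}$ and $\inf\{n: Y_n>y\}-1$, the same observation that the one-step anticipation is an $(\mathcal{H}_n)$-stopping time, and the same chain of Markov's inequality, the domination hypothesis, and the pointwise bound $Y_{\mathfrak{n}}\leq Y^*\wedge y$. The only difference is your truncation at $N$ followed by monotone convergence, a harmless extra precaution that the paper omits by applying the domination condition directly to the (possibly infinite) stopping time.
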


\begin{proof}
We define the $(\mathcal{F}_n)$-stopping times
\begin{align*}
\mathfrak{n}_y &:= \inf\{ n: \, Y_n > y \} \quad \text{ and } \quad 
\mathfrak{m}_x := \inf\{ n: \, X_n > x \},
\end{align*}
where we use the convention $\inf \emptyset = \infty$. This allows us to rewrite the condition on the supremal process in terms of stopping times. Indeed, 
\begin{align*}
\{ Y^* \leq y \} &= \{ \mathfrak{n}_y = \infty\}, \\
\{ X^* > x \} &= \{ \mathfrak{m}_x < \infty\} =  \{ X_{\mathfrak{m}_x} > x; \mathfrak{m}_x < \infty\}.
\end{align*}
Thus,
\begin{align*}
\mathbb{P}\left( X^*  > x ; Y^* \leq y \right) &= \mathbb{P}\left( X_{\mathfrak{m}_x}  > x ; \mathfrak{m}_x < \infty; \mathfrak{n}_y = \infty \right) \\
&=  \mathbb{P}\left( X_{\mathfrak{m}_x \wedge (\mathfrak{n}_y-1)}  > x ; \mathfrak{m}_x < \infty ; \mathfrak{n}_y = \infty \right) \\
&\leq \mathbb{P}\left( X_{\mathfrak{m}_x \wedge (\mathfrak{n}_y-1)}  > x\right).
\end{align*}
Next, we remark that 
\begin{align*}
\mathfrak{\delta}:= \mathfrak{m}_x \wedge (\mathfrak{n}_y - 1)
\end{align*}
defines an $(\mathcal{H}_n)$-stopping time. Chebycheff's inequality and~\eqref{eq:DominationCondition} show
\begin{align*}
 \mathbb{P}\left( X_{\mathfrak{m}_x \wedge (\mathfrak{n}_y-1)}  > x \right) < \frac{1}{x} \mathbb{E}\left[X_{\mathfrak{m}_x \wedge (\mathfrak{n}_y-1)} \right] \leq \frac{C}{x} \mathbb{E}\left[Y_{\mathfrak{m}_x \wedge (\mathfrak{n}_y-1)} \right] .
\end{align*}

For all $k \leq \mathfrak{n}_y - 1$ we have $Y_k \leq y$. Hence, $Y_{\mathfrak{m}_x \wedge (\mathfrak{n}_y-1)} \leq y$.
Additionally, it holds trivially $Y_{\mathfrak{m}_x \wedge (\mathfrak{n}_y-1)} \leq Y^*$. Overall, we find $\mathbb{P}$-a.s.
\begin{align*}
Y_{\mathfrak{m}_x \wedge (\mathfrak{n}_y-1)}  \leq Y^* \wedge y.
\end{align*}
The assertion follows by taking expectation.
\end{proof}

Extrapolation allows us to relate lower moments of maximal processes for processes that are in a domination relation.
\begin{corollary}[Extrapolation] \label{cor:Domination}
Let the conditions of Lemma~\ref{lem:Domination} be satisfied and $k \in (0,1)$. Then
\begin{align*}
\mathbb{E}\left[ (X^*)^k \right] < \frac{1 + C - k}{1-k} \mathbb{E}\left[( Y^*)^k \right].
\end{align*}
\end{corollary}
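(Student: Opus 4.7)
The plan is to use the weak-type bound from Lemma~\ref{lem:Domination} together with the layer-cake (distribution function) representation of the $k$-th moment. Writing
\begin{align*}
\mathbb{E}\bigl[(X^*)^k\bigr] = k \int_0^\infty x^{k-1} \mathbb{P}(X^* > x) \dd x,
\end{align*}
I would split the event $\{X^* > x\}$ according to whether the companion maximum $Y^*$ also exceeds $x$:
\begin{align*}
\mathbb{P}(X^* > x) \leq \mathbb{P}(X^* > x ; Y^* \leq x) + \mathbb{P}(Y^* > x).
\end{align*}
The second term contributes exactly $\mathbb{E}[(Y^*)^k]$ after integration (again by the layer-cake formula).

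For the first term I would apply Lemma~\ref{lem:Domination} with the choice $y = x$, giving the strict bound $\mathbb{P}(X^* > x; Y^* \leq x) < \tfrac{C}{x} \mathbb{E}[Y^* \wedge x]$. Substituting and swapping expectation and integral via Fubini's theorem reduces matters to the deterministic computation of $\int_0^\infty x^{k-2}(Y^* \wedge x)\dd x$. Splitting this integral at the random point $Y^*$ yields
\begin{align*}
\int_0^{Y^*} x^{k-1} \dd x + Y^* \int_{Y^*}^\infty x^{k-2} \dd x = \frac{(Y^*)^k}{k} + \frac{(Y^*)^k}{1-k} = \frac{(Y^*)^k}{k(1-k)},
\end{align*}
where the second integral is finite precisely because $k \in (0,1)$ makes $k-2 < -1$. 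Hence the contribution from the first term is strictly less than $\tfrac{C}{1-k}\mathbb{E}[(Y^*)^k]$, and summing the two contributions produces the advertised constant $\tfrac{1+C-k}{1-k}$.

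The argument is essentially routine once Lemma~\ref{lem:Domination} is granted; the only subtlety worth flagging is ensuring that the strict inequality is preserved under the integration (which follows because the weak-type bound is strict for every $x>0$ at which the right-hand side is finite, and the two sides of the final estimate are not both infinite when the bound is nontrivial). The main obstacle, if any, is bookkeeping the exponents in the Fubini step correctly so that the pole at $x=\infty$ is integrable, which is exactly the role played by the hypothesis $k<1$; the other endpoint at $x=0$ is harmless because $Y^* \wedge x \leq x$ makes the integrand $O(x^{k-1})$ there, integrable since $k>0$.
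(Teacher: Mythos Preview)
Your argument is correct and is precisely the standard layer-cake/Fubini computation that the paper defers to by citing \cite[Proposition~4.7]{MR1725357}; the split $\mathbb{P}(X^*>x)\le \mathbb{P}(X^*>x;\,Y^*\le x)+\mathbb{P}(Y^*>x)$, the application of Lemma~\ref{lem:Domination} with $y=x$, and the evaluation of $\int_0^\infty x^{k-2}(Y^*\wedge x)\dd x=(Y^*)^k/\bigl(k(1-k)\bigr)$ together reproduce the constant $\tfrac{1+C-k}{1-k}$ exactly as stated.
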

\begin{proof}
The proof follows along the lines of the one for~\cite[Proposition~4.7]{MR1725357}.
\end{proof}

\subsubsection{Application} \label{app:Application}
Let $r \in (0,\infty)$. We define the time discrete filtration
\begin{align}
(\mathcal{F}_n^N) &= (\mathcal{F}_{t_{n}+\tau/2})
\end{align}
and the $(\mathcal{F}_n^N)$-adapted processes
\begin{align} \label{eq:XExtra}
X_M &:= \left( \sum_{n=1}^M \int_{J_n} \norm{\frac{\overline{E}(t)}{\sqrt{\tau}}}_{L^2_x}^r \dd t \right) 1_{\{M \leq N\}}, \\ \label{eq:YExtra}
Y_M &:= \max_{n \leq (M+2) \wedge N} \norm{G_n(u_{(n-2)\vee 0})}_{L_2(\mathfrak{U};L^2_x)}^r,
\end{align}
where~$\overline{E}$ is defined by~\eqref{eq:CompensatorIntegral}.

\begin{lemma} \label{lem:DominationCheck}
$(X_n)$ is dominated by~$(Y_n)$.
\end{lemma}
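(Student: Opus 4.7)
The plan is to verify the domination condition~\eqref{eq:DominationCondition} directly: fix an arbitrary $(\mathcal{H}_n)$-stopping time~$\mathfrak{n}$ and establish $\mathbb{E}[X_\mathfrak{n}] \leq C \mathbb{E}[Y_\mathfrak{n}]$ with a constant~$C$ independent of~$\mathfrak{n}$,~$N$ and~$\tau$. Abbreviating $A_n := \int_{J_n} \norm{\overline{\mathrm{E}}(t)/\sqrt{\tau}}_{L^2}^r \dd t$, the truncation $1_{\{M\leq N\}}$ becomes automatic and one has
\begin{align*}
\mathbb{E}[X_\mathfrak{n}] = \sum_{n=1}^N \mathbb{E}\left[ 1_{\{n \leq \mathfrak{n}\}} A_n \right].
\end{align*}
It will then be enough to produce the per-term bound $\mathbb{E}[1_{\{n \leq \mathfrak{n}\}} A_n] \leq C\tau \mathbb{E}[Y_\mathfrak{n}]$; summation combined with $N\tau \leq T$ concludes.

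The central technical point is a mismatch of filtrations. The integrand operators $G_n(u_{(n-2)\vee 0})$ and $G_{n+1}(u_{(n-1)\vee 0})$ featuring in~$\overline{\mathrm{E}}$ on~$J_n$ are $\mathcal{F}_{t_{n-1/2}}$-measurable, so the natural filtration for a conditional Burkholder-Davis-Gundy estimate is $\mathcal{F}_{t_{n-1/2}}$. On the other hand $\{n \leq \mathfrak{n}\} \in \mathcal{H}_{n-1} = \mathcal{F}_n^N = \mathcal{F}_{t_{n+1/2}}$, which is too late to be separated from the Brownian increments on~$J_n$ used to build~$A_n$. The key observation is that because $\mathfrak{n}$ is an $(\mathcal{H}_n)$-stopping time,
\begin{align*}
\{\mathfrak{n} \leq n-2\} \in \mathcal{H}_{n-2} = \mathcal{F}_{n-1}^N = \mathcal{F}_{t_{n-1/2}},
\end{align*}
so $1_{\{n-1 \leq \mathfrak{n}\}}$ is $\mathcal{F}_{t_{n-1/2}}$-measurable, while the trivial pointwise enlargement $1_{\{n \leq \mathfrak{n}\}} \leq 1_{\{n-1 \leq \mathfrak{n}\}}$ holds.

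With the indicator relocated into the right sigma-algebra, the tower property together with a conditional BDG inequality is available. Starting from~\eqref{eq:CompensatorIntegral}, bounding the integral over~$J_n$ by~$\tau$ times the supremum of the two stochastic integrals, and using $\abs{a_n}, \abs{a_{n+1}} \leq 1$ together with the $\mathcal{F}_{t_{n-1/2}}$-measurability of the integrand operators, one obtains
\begin{align*}
\mathbb{E}\left[ A_n \,\big|\, \mathcal{F}_{t_{n-1/2}} \right] \leq C_r \tau \left( \norm{G_n(u_{(n-2)\vee 0})}_{L_2(\mathfrak{U};L^2)}^r + \norm{G_{n+1}(u_{(n-1)\vee 0})}_{L_2(\mathfrak{U};L^2)}^r 1_{\{n+1 \leq N\}} \right).
\end{align*}

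It remains to invoke the monotone definition of~$(Y_M)$: whenever $n-1 \leq \mathfrak{n}$, both $n \leq \mathfrak{n}+2$ and $n+1 \leq \mathfrak{n}+2$ hold, so after intersecting with $\{k \leq N\}$ through the $1_{\{n+1\leq N\}}$ factor one gets the pointwise estimate
\begin{align*}
1_{\{n-1 \leq \mathfrak{n}\}}\left( \norm{G_n(u_{(n-2)\vee 0})}_{L_2(\mathfrak{U};L^2)}^r + \norm{G_{n+1}(u_{(n-1)\vee 0})}_{L_2(\mathfrak{U};L^2)}^r 1_{\{n+1\leq N\}} \right) \leq 2 Y_\mathfrak{n}.
\end{align*}
Putting the pieces together,
\begin{align*}
\mathbb{E}[1_{\{n \leq \mathfrak{n}\}} A_n] \leq \mathbb{E}\left[ 1_{\{n-1 \leq \mathfrak{n}\}} \mathbb{E}\left[ A_n \,\big|\, \mathcal{F}_{t_{n-1/2}} \right] \right] \leq 2 C_r \tau \mathbb{E}[Y_\mathfrak{n}],
\end{align*}
and summation over $n\in [N]$ yields $\mathbb{E}[X_\mathfrak{n}] \leq 2 C_r T \mathbb{E}[Y_\mathfrak{n}]$. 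The main obstacle is precisely the filtration gap between the discrete index~$n$ (the natural measurability of $1_{\{n \leq \mathfrak{n}\}}$) and the left endpoint~$t_{n-1/2}$ of the Brownian window where the integrand operators in $\overline{\mathrm{E}}$ live; the ``$+2$'' buffer built into the definition of~$Y_M$ and the one-step advance encoded in~$(\mathcal{H}_n)$ are exactly what make the enlargement $1_{\{n \leq \mathfrak{n}\}} \leq 1_{\{n-1 \leq \mathfrak{n}\}}$ admissible and still produce~$2Y_\mathfrak{n}$ as a majorant.
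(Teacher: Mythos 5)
Your argument is correct and coincides with the paper's own proof: both shift the indicator via $1_{\{n\leq\mathfrak{n}\}}\leq 1_{\{n-1\leq\mathfrak{n}\}}$ to gain $\mathcal{F}_{(n-1)\vee 0}^N=\mathcal{F}_{t_{n-1/2}}$-measurability from the $(\mathcal{H}_n)$-stopping-time property, apply the tower property and a conditional Burkholder--Davis--Gundy estimate to $\overline{\mathrm{E}}$ on $J_n$, and majorize the resulting $G$-terms pointwise by $Y_{\mathfrak{n}}$ before summing with $N\tau\leq T$. The only cosmetic point is that your first display should read ``$\leq$'' rather than ``$=$'' (on $\{\mathfrak{n}>N\}$ one has $X_{\mathfrak{n}}=0$ while the sum need not vanish), which is the direction you need anyway.
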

\begin{proof}
Define $(\mathcal{H}_n^N) = (\mathcal{F}_{n+1}^N)$. Let $\mathfrak{n}$ be an~$(\mathcal{H}_n^N)$ stopping time. Then
\begin{align*}
\mathbb{E}\left[ X_{\mathfrak{n}} \right] &= \mathbb{E}\left[ \left( \sum_{n=1}^{\mathfrak{n}} \int_{J_n} \norm{\frac{\overline{E}(t)}{\sqrt{\tau}}}_{L^2_x}^r \dd t\right) 1_{\{ \mathfrak{n}\leq N\}} \right] \\
&\leq \mathbb{E}\left[  \sum_{n=1}^{N} \int_{J_n} 1_{\{\mathfrak{n} \geq n\} }  \norm{\frac{\overline{E}(t)}{\sqrt{\tau}}}_{L^2_x}^r \dd t \right].
\end{align*}

Notice that $\set{\mathfrak{n} \geq n} \subset \set{\mathfrak{n} \geq n-1}$ and therefore $1_{\{\mathfrak{n} \geq n\} }  \leq 1_{\{\mathfrak{n} \geq n-1\} } $. Moreover, for $n \geq 2$ we find $\set{\mathfrak{n} \geq n-1} = \set{ \mathfrak{n} \leq n-2 }^c \in \mathcal{H}_{n-2}^N = \mathcal{F}_{n-1}^N$. Similarly, $\set{\mathfrak{n} \geq 0} = \Omega \in \mathcal{F}_0^N$. Overall, $1_{\{\mathfrak{n} \geq n-1\} } $ is $\mathcal{F}_{(n-1)\vee 0}^N$-measurable. The tower property of conditional expectation shows

\begin{align*}
\mathbb{E}\left[  \sum_{n=1}^{N} 1_{\{\mathfrak{n} \geq n\} }   \int_{J_n} \norm{\frac{\overline{E}(t)}{\sqrt{\tau}}}_{L^2_x}^r \dd t \right] &\leq  \sum_{n=1}^{N}  \mathbb{E}\left[  1_{\{\mathfrak{n} \geq n-1\} } \int_{J_n} \norm{\frac{\overline{E}(t)}{\sqrt{\tau}}}_{L^2_x}^r \dd t\right]  \\
&= \sum_{n=1}^{N} \mathbb{E}\left[  \mathbb{E}\left[  1_{\{\mathfrak{n} \geq n-1\} }  \int_{J_n} \norm{\frac{\overline{E}(t)}{\sqrt{\tau}}}_{L^2_x}^r \dd t \bigg| \mathcal{F}_{(n-1)\vee 0}^N \right] \right] \\
&= \mathbb{E}\left[ \sum_{n=1}^{N} 1_{\{\mathfrak{n} \geq n-1\} }  \int_{J_n}    \mathbb{E}\left[  \norm{\frac{\overline{E}(t)}{\sqrt{\tau}}}_{L^2_x}^r \bigg| \mathcal{F}_{(n-1)\vee 0}^N \right]  \dd t \right].
\end{align*}

Recall the definition of $\overline{E}$, cf.~\eqref{eq:CompensatorIntegral}. An application of~\cite[Lemma~3.1]{MR4116708} shows $\mathbb{P}$-a.s. for all $n \in [N]$ and $t \in J_n$
\begin{align*}
&\mathbb{E}\left[ \norm{\int_{t}^{t_{n + 1/2}} a_n(s) G_n(u_{(n-2)\vee 0}) \dd W(s) }_{L^2_x}^r \bigg| \mathcal{F}_{(n-1)\vee 0}^N \right] \\
&\hspace{3em} \leq C_r \mathbb{E}\left[ \left( \int_{t}^{t_{n + 1/2}}  a_n^2(s) \norm{G_n(u_{(n-2)\vee 0}) }_{L_2(\mathfrak{U}; L^2_x)}^2 \dd s \right)^{r/2} \right] \\
&\hspace{3em} \leq C_r (\tau)^{r/2} \mathbb{E}\left[ \norm{G_n(u_{(n-2)\vee 0}) }_{L_2(\mathfrak{U};L^2_x)}^r \right] .
\end{align*}

Similarly, for $n \in [N-1]$ and $t \in J_n$,
\begin{align*}
&\mathbb{E}\left[ \norm{\int_{ t_{n-1/2}}^{t} a_{n+1}(s)  G_{n+1}( u_{(n-1)\vee 0}) \dd W(s)  }_{L^2_x}^r \bigg|  \mathcal{F}_{(n-1)\vee 0}^N \right]\\
&\hspace{3em} \leq C_r (\tau)^{r/2} \mathbb{E}\left[ \norm{G_{n+1}(u_{(n-1)\vee 0}) }_{L_2(\mathfrak{U};L^2_x)}^r \right].
\end{align*}
Therefore, $\mathbb{P}$-a.s. for $n \in [N]$
\begin{align} \label{eq:EstimateCondition}
\begin{aligned}
&\sup_{t\in J_n} \mathbb{E}\left[  \norm{\frac{\overline{E}(t)}{\sqrt{\tau}}}_{L^2_x}^r \bigg| \mathcal{F}_{(n-1)\vee 0}^N \right]  \lesssim C_r \Big( \mathbb{E}\left[ \norm{G_n(u_{(n-2)\vee 0}) }_{L_2(\mathfrak{U};L^2_x)}^r \right] \\
&\hspace{4em} + \mathbb{E}\left[ \norm{G_{n+1}(u_{(n-1)\vee 0}) }_{L_2(\mathfrak{U};L^2_x)}^r \right] 1_{\{n+1 \leq N\} } \Big).
\end{aligned}
\end{align}

This allows us to conclude
\begin{align*}
&\mathbb{E}\left[ \sum_{n=1}^{N} 1_{\{\mathfrak{n} \geq n-1\} }  \int_{J_n}    \mathbb{E}\left[  \norm{\frac{\overline{E}(t)}{\sqrt{\tau}}}_{L^2_x}^r \bigg| \mathcal{F}_{(n-1)\vee 0}^N \right]  \dd t \right]\\
&\quad \lesssim \mathbb{E}\left[  \sum_{n=1}^{N-1} \tau   1_{\{\mathfrak{n} \geq n-1\} } C_r \mathbb{E}\left[ \norm{G_n(u_{(n-2)\vee 0})}_{L_2(\mathfrak{U};L^2_x)}^r  +  \norm{G_{n+1}(u_{(n-1)\vee 0})}_{L_2(\mathfrak{U};L^2_x)}^r \right] \right] \\
&\quad \quad +\mathbb{E}\left[   \tau   1_{\{\mathfrak{n} \geq N-1\} } C_r \mathbb{E}\left[ \norm{G_N(u_{(N-2)\vee 0})}_{L_2(\mathfrak{U};L^2_x)}^r  \right] \right] \\
&\quad \lesssim C_r \mathbb{E}\left[ \max_{n \leq (\mathfrak{n}+ 2) \wedge N} \norm{G_n(u_{(n-2)\vee 0})}_{L_2(\mathfrak{U};L^2_x)}^r \right] \mathbb{E}\left[ \sum_{n=1}^{N} \tau   1_{\{\mathfrak{n} \geq n-1\} } \right] \\
&\quad \lesssim C_r T \mathbb{E}\left[ \max_{n \leq (\mathfrak{n}+ 2) \wedge N} \norm{G_n(u_{(n-2)\vee 0})}_{L_2(\mathfrak{U};L^2_x)}^r  \right] \leq\tilde{C}_r \mathbb{E} \left[ Y_{\mathfrak{n}} \right].
\end{align*}

Overall, we have verified
\begin{align}
\mathbb{E}\left[ X_{\mathfrak{n}} \right]  \leq \tilde{C}_r \mathbb{E} \left[ Y_{\mathfrak{n}} \right].
\end{align}
The assertion follows since~$\mathfrak{n}$ was arbitrary.
\end{proof}

\begin{remark}
Since $C_r \sim r^{r/2}$ one can estimate $ \tilde{C}_r \leq C 2^r r^{r/2}$ for large $r$.
\end{remark}

\printbibliography 


\end{document}